\numberwithin{equation}{section}
\newtheorem{Theorem}{Theorem}[section]
\newtheorem{Lemma}[Theorem]{Lemma}
\newtheorem{Proposition}[Theorem]{Proposition}
\newtheorem{Corollary}[Theorem]{Corollary}
\newtheorem{Assumption}{H.\!\!}
\theoremstyle{definition}
\theoremstyle{remark}
\newtheorem{Remark}{Remark}[section]
 \def\p{\partial} \def\nb{\nonumber}
\def\to{\rightarrow}
 \def\ol{\overline}    \def\ul{\underline}
\def\Om{\Omega}  \def\om{\omega} 
\newcommand{\q}{\quad}
\def\l{\label}  
  \def\fa{\forall}
  \def\a{\alpha} 
\def\eps{\varepsilon}
 \def\t{\times}  
\def\ms{\medskip}
\def \la{\langle} \def\ra{\rangle}
\def\cA{\mathcal{A}}
\def\cB{\mathcal{B}}
\def\cC{\mathcal{C}}
\def\cD{\mathcal{D}}
\def\cF{\mathcal{F}}
\def\cH{\mathcal{H}}
\def\cI{\mathcal{I}}
\def\cL{\mathcal{L}}
\def\cM{\mathcal{M}}
\def\cN{\mathcal{N}}
\def\cO{\mathcal{O}}
\def\cP{\mathcal{P}}
\def\cS{\mathcal{S}}
\def\cT{\mathcal{T}}
\def\cV{\mathcal{V}}
\def\d{{ \mathrm{d}}}
\def\sE{{\mathbb{E}}}
\def\sF{{\mathbb{F}}}
\def\sN{{\mathbb{N}}}
\def\sP{\mathbb{P}}
\def\sR{{\mathbb R}}
\def\sS{{\mathbb{S}}}
\newcommand{\tr}{\textnormal{tr}}
\DeclareMathOperator*{\esssup}{ess\,sup}
\newcommand{\lc}
{\mathrel{\raise2pt\hbox{${\mathop<\limits_{\raise1pt\hbox
{\mbox{$\sim$}}}}$}}}
\newcommand{\gc}
{\mathrel{\raise2pt\hbox{${\mathop>\limits_{\raise1pt\hbox{\mbox{$\sim$}}}}$}}}
\newcommand{\ec}
{\mathrel{\raise2pt\hbox{${\mathop=\limits_{\raise1pt\hbox{\mbox{$\sim$}}}}$}}}
\def\bb{\begin{equation}} \def\ee{\end{equation}}
\def\bbn{\begin{equation*}} \def\een{\end{equation*}}
\def\beqn{\begin{eqnarray}}  \def\eqn{\end{eqnarray}}
\def\beqnx{\begin{eqnarray*}} \def\eqnx{\end{eqnarray*}}
\def\bn{\begin{enumerate}} \def\en{\end{enumerate}}
\def\bd{\begin{description}} \def\ed{\end{description}}
\begin{document}

\title{
Convergence of 
policy gradient methods   
  for   
  finite-horizon
exploratory  
  linear-quadratic  control
  problems 
  }

\author{
Michael Giegrich\thanks{
Mathematical Institute, University of Oxford, Oxford OX2 6GG, UK
 ({\tt michael.giegrich@maths.ox.ac.uk,
 christoph.reisinger@maths.ox.ac.uk})}
\and
Christoph Reisinger\footnotemark[1]
\and
Yufei Zhang\thanks{Department of Mathematics, Imperial College London,  London,  UK 
({\tt yufei.zhang@imperial.ac.uk})}
}

\date{}
\maketitle

\noindent\textbf{Abstract.} 
We study the global linear convergence of policy gradient
(PG) methods for
finite-horizon 
continuous-time
exploratory  linear-quadratic control (LQC)   problems. 
The setting includes  stochastic LQC problems
with indefinite costs
and   allows    additional     entropy  regularisers in the objective. 
We consider a continuous-time 
Gaussian policy whose mean is linear in the state variable and whose covariance is state-independent.  
Contrary to discrete-time problems,
the cost is noncoercive in the policy and   not all  descent directions    lead to bounded iterates.
We propose geometry-aware gradient descents    for the mean and covariance of the policy using   the Fisher geometry  and 
the Bures-Wasserstein geometry, respectively. 
The policy iterates are   shown to 
satisfy an a-priori bound, and converge
  globally to  the 
optimal policy with a linear rate.
We further propose a novel PG method 
with   discrete-time policies.
The algorithm  leverages the continuous-time analysis,
and 
   achieves a robust  linear convergence across different action frequencies. 
 A numerical experiment confirms the convergence and robustness of the proposed algorithm.

\medskip
\noindent
\textbf{Key words.} 
Continuous-time   linear-quadratic   control,
policy optimisation,
relative entropy,
geometry-aware gradient,
global linear convergence,
mesh-independent convergence
%

\ms
\noindent
\textbf{AMS subject classifications.} 
68Q25, 93E20 



%
%

\medskip
 
 \section{Introduction}
 
In recent years, the policy gradient (PG) method 
and its variants 
 have become an effective tool in   seeking optimal polices to control stochastic  systems 
 (see e.g., \cite{konda1999actor, sutton1999policy,kakade2001natural,schulman2015trust,
 schulman2017proximal}).
These algorithms parametrise the policy as a function   of the system state, and update the policy parametrisation  based on  the   gradient of the control objective.
Most of the progress,  especially the     convergence analysis       of PG methods,
 has been in discrete-time   
 Markov decision processes (MDPs)
 (see e.g., 
 \cite{fazel2018global, hambly2020policy, 
 mei2020global,
 zhang2021policy,
  kerimkulov2022convergence}).
However,
most real-world control systems, such as those in aerospace, the automotive industry and robotics, are naturally continuous-time dynamical systems,
and hence do not fit in the MDP setting. 

 One of the most fundamental stochastic  control problems is the finite-horizon  linear-quadratic control (LQC) problem.
 It aims to control a 
 linear stochastic  differential equation 
  over a given time horizon,
subject to     a quadratic cost.
This problem is important 
as it provides a  reasonable approximation  of many nonlinear control problems, and has been used in a wide range of applications,
including
portfolio optimisation \cite{zhou2000continuous, wang2020continuous}, algorithmic trading \cite{cartea2018algorithmic} and  production management of exhaustible resources \cite{graber2016linear}.
 Moreover, the optimal policy of an LQC problem admits a natural parameterisation as 
 a (time-dependent) linear function  of the state,
 and hence it suffices to determine the coefficients of this linear function. 
 All these properties make the LQC problem    an  important theoretical benchmark for 
studying learning-based control. 
 
\paragraph{Issues and challenges   from continuous-time models.}
It is insufficient and improper to rely solely on the analysis and algorithms for   discrete-time MDPs to solve   continuous-time problems, including LQC problems.
 There is a mismatch between the algorithm timescale for the former and the underlying systems timescale for the latter. 
This model mismatch can make 
conventional  discrete-time algorithms 
 very sensitive to the     discretisation stepsize. 
For instance, the empirical studies 
in \cite{munos2006policy, park2021time}
  suggest that    standard  PG methods   exhibit degraded performance as 
  the agent's action frequency increases
  (see  Section \ref{sec:numerical} for more details).
Similar performance degradation  
 has   been observed 
in \cite{tallec2019making}
for Q-learning methods. 
Recently, \cite{jia2022policy} and   \cite{jia2023q} 
extend PG and Q-learning methods, respectively,
to continuous-time problems 
without time discretisation,
in order to  develop algorithms that are robust across different timescales.
Nevertheless, the convergence   of these algorithms has not been studied, even for LQC problems.

%
There are technical reasons behind the limited theoretical progress of PG methods  for  continuous-time     LQC problems. 
The   objective of a LQC problem is typically nonconvex with respect to the policies (see Proposition \ref{prop:non_coercive}),
analogous to its discrete-time   counterpart
\cite{fazel2018global,zhang2021policy}. 
This links    the convergence analysis   of   PG methods
 to 
 the    analysis of gradient search for nonconvex objectives, which has always been one of the formidable challenges in optimisation theory.
The time-dependent nature  of the optimal policy for   finite-horizon 
 LQC problems   poses new challenges.
It  requires analysing the    optimisation landscape 
over  a suitable infinite-dimensional 
 policy space,
 instead of in a finite-dimensional parameter space.

One significant new feature of  LQC problems 
with continuous-time policies,
in contrast to discrete-time policies, 
  is the \emph{noncoercivity}    of the cost function
   (see Proposition \ref{prop:non_coercive}). 
Coercivity of the cost means that each 
  sublevel set of the cost is bounded, and this
implies that the  iterates  
of a discrete-time algorithm 
remain bounded 
as long as the cost  decreases along the iteration.
This
  can   be  ensured by 
updating the policies along 
\emph{any descent direction} of the cost
with a sufficiently small stepsize. 
The lack of coercivity of the continuous-time cost function    complicates the analysis of PG methods,
since 
 for a given descent direction, 
  there may not  exist   a constant stepsize 
such that  the iterates remain bounded 
    as the algorithm proceeds.

\paragraph{Our  contributions.}
 
This paper proposes   convergent   PG methods to solve  finite-horizon 
 exploratory   LQC problems,
   which generalise     classical LQC problems
   by allowing   an   entropy regulariser  in the objective. 
   
\begin{itemize}
\item We reformulate the exploratory LQC problem into a  minimisation  over   Gaussian polices.  
Each  Gaussian  policy is parameterised by two time-dependent  functions $(K,V)$: 
  the mean is 
  linear in the state 
   with the coefficient $K$, 
and the covariance is the function $V$.  
The policy gradient  of the cost 
 is   characterised by     the Pontryagin optimality principle.
The cost is shown to satisfy 
a non-uniform   {\L}ojasiewicz condition 
and 
a non-uniform smoothness condition 
(Propositions \ref{prop:Lojaiewicz} and \ref{prop:Lipshcitz_smooth}).
We then prove that 
the cost is neither coercive nor quasiconvex in $K$,
even in a one-dimensional deterministic setting
(Proposition \ref{prop:non_coercive}).

\item 
 We propose a geometry-aware PG method to solve the   LQC problem
in continuous time. 
The gradient for  $K$ 
adapts to   the geometry induced by
the Fisher information metric
(also known as the natural gradient),
while 
the gradient for  $V$ 
adapts to the geometry induced by the
Bures-Wasserstein metric.  
These 
 geometry-aware
 gradient directions 
 are proved to 
 enjoy 
an \emph{implicit regularisation} property, 
i.e., they  preserve an   $L^2$-bound 
of $K$, and pointwise upper and lower bounds of $V$ 
without an explicit projection step
(Proposition \ref{prop:uniform_bound}). 
This allows for 
 exploiting the local regularity of the cost,
 and proving   the PG method 
 converges globally to  the 
optimal policy with a linear rate (Theorem \ref{thm:linear_conv}).

\item 
By leveraging the continuous-time analysis,
we propose   practically implementable    PG methods that
 take actions at discrete time points,
 and achieve a   linear convergence guarantee   independent of the action frequency. 
Our analysis shows that scaling the discrete-time gradients linearly   with respect to action frequency is critical for a robust performance of the algorithm in different timescales (Remark \ref{rmk:hyperparameter_scaling}). 
The  theoretical property is verified    through a numerical experiment on an    exploratory
  LQC problem arising from      mean-variance portfolio selection problems. 
This shows that 
the number of required  iterations 
for conventional PG methods 
grows linearly in   the number of  action time points,
while  the proposed  PG methods 
achieve a robust linear convergence rate  over a wide range of action frequencies.

\end{itemize}

\paragraph{Our approaches and related works.}
Most existing theoretical works of PG methods for LQC problems consider  the  setting of   infinite horizon and   deterministic dynamics (see e.g., 
\cite{fazel2018global,bu2020policy}).
For the case with noisy dynamics,
  existing works focus on  discrete-time problems.
  This includes   the setting of 
    infinite horizon and additive noise
    \cite{jin2020analysis, zhang2021policy},
    finite horizon and additive noise
\cite{hambly2020policy},
and 
    infinite horizon and multiplicative noise \cite{gravell2020learning}.
    We further refer the reader to 
    \cite{hambly2021policy, wang2021global, zhang2021derivative} for LQ games. 
  In all of these settings, the optimal policy admits a 
    \emph{finite-dimensional}   parameterisation.
     
     Compared to existing works, 
      our technical difficulties are three-fold.
      First, 
analysing the optimisation landscape over  
infinite-dimensional
continuous-time  policies
requires  continuous-time control theory.
For instance, 
{the policy gradient is derived via   Pontryagin's maximum principle.}
The cost regularity
(such as  {\L}ojasiewicz and smoothness conditions)
 is proved by  using partial differential equation techniques. 
 The lack of cost coercivity also adds complexity to the choice of appropriate descent directions, as discussed in Remark \ref{rmk:implicit_regularisation}. 
 {Notably, the noncoercivity of the cost function in this context primarily stems from the 
 fact that a policy can  have an infinite number of changes in values, occurring at arbitrary time points.
 This characteristic distinguishes our problem from   aforementioned discrete-time scenarios, in which policies change solely at predetermined time points.}

 Second, 
the finite-horizon     continuous-time setting  requires
more advanced techniques   
  for the nondegeneracy of  the state covariance than the   discrete-time setting. 
In  
  \cite{hambly2020policy, zhang2021policy},
the state covariance is lower bounded   by the  minimum eigenvalue
  of the  covariance of   system noises,
  \emph{uniformly over all   policies}. 
This bound vanishes as the time discretisation  stepsize tends to zero,  as  the  covariance of noise  increment typically scales  
linearly to the stepsize. 
Moreover, in the present setting, 
the system noise can degenerate due to a controlled diffusion coefficient.  
We overcome this difficulty by establishing 
 the positive definiteness of the state covariance 
 \textit{along the policy iterates}.
 This is possible by
a) 
first estimating   the state covariance  explicitly using  the magnitude of   policies, but independent 
of the system noise
(Lemma \ref{lemma:Sigma_bdd}), 
and   b) then  proving that
the geometry-aware gradient directions induce  a uniform bound of the iterates. 
This approach is different from the contraction argument in \cite{reisinger2022linear}
for     problems with uncontrolled diffusion coefficients.

Finally, the possible degeneracy of cost matrices requires  sharper   estimate   of the cost regularity. 
All existing   works assume 
a running cost of the form   $f(x,a)=x^\top Q x +a^\top R a$, with positive definite matrices $Q$ and $R$,
and estimate optimisation landscape using   minimum eigenvalues of $Q$ and $R$.
However, for many applications of  stochastic LQC problems,
the cost can involve the product of state and control variables \cite{cartea2018algorithmic},
or an indefinite   weight $R$ \cite{zhou2000continuous, wang2020continuous}. 
Here, we derive tighter  
    {\L}ojasiewicz and   smoothness bounds of the cost 
       using solutions to  
Lyapunov equations, 
instead of   the cost coefficients.  
This allows us to  consider a general setting
where both the drift and diffusion coefficients
of the state   
 are controlled,
and all cost weights can be negative definite.

\paragraph{Notation.} 
 For  each Euclidean space $E$, 
we denote by 
$\la \cdot, \cdot \ra$ its usual inner product 
and 
$|\cdot|$ the  norm induced by $\la \cdot, \cdot \ra$.
For each $A\in \sR^{n\t m}$, 
we denote by  $A^\top$ the transpose of $A$,
by  $\tr(A)$ the trace of $A$,
and by
$\|A\|_2$ the spectral norm of $A$.
 For each  $n\in \sN$, we denote by 
 $I_n$ the $n\t n$ identity matrix,
 by $\sS^n$,  
$\ol{\sS^n_{+}}$ 
and $\sS^n_+$
 the space of $n\t n$ symmetric,
 symmetric positive semidefinite,
 and symmetric positive definite  matrices, respectively,
 and  by 
  $\lambda_{\max}(A)$ and 
 $\lambda_{\min}(A)$ the  largest and smallest eigenvalues of   $A\in \sS^n$,
 respectively.
 We equip $\sS^n$ with the 
  Loewner (partial) order such that 
for each $A,B\in \sS^{n}$, 
$A\succeq B$ if $A-B\in \ol{\sS^n_+}$.
For every  measurable functions $F,G:[0,T]\to \sS^n$,  
  $F\succeq G$ stands for  $F(t)-G(t)\in \ol{\sS^n_+}$ for a.e.~$t\in [0,T]$.  
   
For each $T> 0$, filtered probability space $(\Om,\cF,\sF,\sP)$ 
 satisfying the usual condition
 (of  right continuity and completeness)
  and  Euclidean space $(E,|\cdot|)$, we introduce the following spaces:
 \begin{itemize}[leftmargin=*,noitemsep,topsep=0pt]
 \item
  $\cB(0,T;E)$ 
 is the space of Borel  measurable  functions
 $\phi:[0,T]\to E$.

\item 
 $L^p(0,T;E)$, $p\in [1,\infty]$,
 is the space of Borel  measurable  functions
 $\phi:[0,T]\to E$ satisfying
 $\|\phi\|_{L^p}=(\int_0^T |\phi_t|^p\, \d t)^{1/p}<\infty$ if $p\in [1,\infty)$
 and 
 $\|\phi\|_{L^\infty}=\esssup_{t\in [0,T]}|\phi_t|<\infty$.
 \item 
 $C([0,T];E)$ 
 is the space of  continuous  functions
 $\phi:[0,T]\to E$ endowed   with the norm 
 $\|\cdot\|_{L^\infty}$.
\item
 $\cS^2(0,T;E)$
  is the space of 
$\sF$-progressively  measurable c\`{a}dl\`{a}g
processes
$X: \Om\t [0,T]\to E$ 
satisfying $\|X\|_{\cS^2}=\sE[\esssup_{t\in [0,T]}|X_t|^2]^{1/2}<\infty$;
\item 
$\cM(E)$ is the set of   measures on $E$,
 $\cP(E)$ is the set of probability measures on $E$,
and  $\cP_2(E)$ is the set of 
square integrable probability measures on $E$ endowed with the $2$--Wasserstein distance. 
\end{itemize}
For each $\mu\in \sR^n$ and $\Sigma\in\ol{\sS^n_+}$,
we denote by $\cN(\mu,\Sigma)$  the Gaussian measure on $\sR^n$ with mean $\mu$ and  covariance matrix $\Sigma$.
We also write 
$\sN_{  0}=\sN\cup\{0\}$
for notation simplicity.

\section{Problem formulation  and main results}
This section introduces    exploratory LQC  problems,
proposes a class of geometry-aware PG  algorithms 
to seek the optimal policy,
and presents their convergence properties.

\subsection{Regularised  stochastic LQ control  problems
with indefinite costs}
This section  recalls the        regularised LQC problem
introduced in \cite{wang2020reinforcement,wang2020continuous}
and  its optimal feedback controls.
Let  $T>0$ be a finite time horizon, 
  $(\Omega, \cF,  \sP)$
be a  complete filtered probability space
on which a
$d$-dimensional 
 standard    Brownian motion  $W =
(W_t)_{t\ge 0}$ is defined,
and   $\sF = (\cF_t)_{t\ge 0}$ 
 be the natural filtration of $W$ augmented by 
   an   independent $\sigma$-algebra $\cF_0$.

We first   introduce the admissible controls and the associated state dynamics. 
Let 
 $\cA$ be  the set of (relaxed)  controls 
$\mathfrak{m}: \Om \to \cM([0,T]\t \sR^k)$ 
such that 
$\mathfrak{m}_t(\d t,\d a)=\mathfrak{m}_t(\d a)\d t$  for a.e.~$t\in [0,T]$, where
$\mathfrak{m}_t: \Om\to   \cP(\sR^k)$ is $\cF_t$-measurable for all $t\in [0,T]$
and $\sE[\int_0^T \int_{\sR^k} |a|^2 \mathfrak{m}_t(\d a)\d t]<\infty$.
For each $\mathfrak{m}\in \cA$, 
consider the following   controlled dynamics:
\bb\label{eq:reg_state_open_loop}
\d X_t =\Phi_t(X_t, \mathfrak{m}_t ) \, \d t+ \Gamma_t(X_t, \mathfrak{m}_t ) \, \d W_{t}, \q t\in [0,T];
\q X_0=\xi_0,
\ee
where  
 $\xi_0\in L^2(\Om;\sR^d)$ is a given $\cF_0$-measurable   random variable, 
and the functions $\Phi:[0,T]\t \sR^d\t \cP_2(\sR^k)\to \sR^d$ and 
$\Gamma:[0,T]\t \sR^d\t \cP_2(\sR^k)\to \ol {\sS^d_+}$
satisfy
for all $(t,x,m)\in [0,T]\t \sR^d\t \cP_2(\sR^k)$,
\begin{equation}
\label{eq:reg_coefficient}
	\Phi_t(x, m) = \int_{\sR^k}(A_tx+B_ta)\,m(  \d a),
\quad 
	\Gamma_t(x, m )=\left(\int_{\sR^k} (C_tx+D_ta)(C_t x+D_ta)^\top \, m( \d a)\right)^{\frac{1}{2}},
\end{equation} 
where $(\cdot)^\frac{1}{2}:\ol {\sS^d_+}\to \ol {\sS^d_+}$ is the  matrix square root such that 
$M^{\frac{1}{2}}(M^{\frac{1}{2}})^\top =M$ for all $M\in \ol {\sS^d_+}$,
and   $ A,B,C,D$ are 
  measurable functions 
such that 
\eqref{eq:reg_state_open_loop}
admits a unique  strong solution $X^\mathfrak{m}\in \cS^2(0,T;\sR^d)$
(see 
(H.\ref{assum:coefficident}) for   precise conditions). 

{The state dynamics \eqref{eq:reg_state_open_loop}
is commonly referred to as an exploratory dynamics  
(see, e.g.,~\cite{wang2020reinforcement,wang2020continuous,vsivska2020gradient}).
It models   interacting with the  system  by repeatedly sampling  random actions   according to  a  given measure-valued control   $\mathfrak{m}$. 
As a consequence of these random  actions, the system's state evolves
with the  aggregated coefficients \eqref{eq:reg_coefficient},
which indicates that 
 the infinitesimal change of the  state at $t$ has a  mean and  variance  integrated with respect to    the sampling  distribution $\mathfrak{m}_t$.
In the special case where    $\mathfrak{m}_t(\d t,\d a)=\boldsymbol{\delta}_{\a_t}(\d a)\d t$
for some $\alpha_t:\Om\t [0,T]\to \sR^k$,
with $\boldsymbol{\delta}_a$ being the  Dirac measure   on $a\in \sR^k$,
  \eqref{eq:reg_state_open_loop} 
simplifies into
\bb
\label{eq:state_strict}
\d X_t =(A_tX_t+B_t  \alpha_t ) \, \d t+(C_t X_t +D_t\alpha_t ) \, \d W_{t}, \q t\in [0,T];
\q X_0=\xi_0,
\ee
which is the   dynamics studied in  the classical LQC problem \cite{yong1999stochastic}.
 See   the end of Section \ref{sec:mesh-independence}  
for more details on the connection   between  an  exploratory state dynamics and
controlling \eqref{eq:state_strict} with  random actions. 

}

We now 
 consider 
minimising the following    cost functional over  all ${\mathfrak{m}\in\cA}$, {which is  known as the exploratory/entropy-regularised control problem  
\cite{wang2020reinforcement,wang2020continuous,vsivska2020gradient,jia2022policy, jia2023q}}:
 \begin{align}\label{eq:reg_cost_open_loop}
    \begin{split}
   %
  \sE\bigg[
\int_0^T\int_{\sR^k}\left(
\frac{1}{2}
\left \la \begin{pmatrix}
Q_t &S^\top_t
\\
S_t & R_t
\end{pmatrix}
  \begin{pmatrix}
X^{\mathfrak{m}}_t
\\
a 
\end{pmatrix},
  \begin{pmatrix}
X^{\mathfrak{m}}_t
\\
a 
\end{pmatrix}
\right\ra  
\,\mathfrak{m}_t(\d a) + \rho\cH(\mathfrak{m}_t\| \ol{\mathfrak{m}}_t) \right)\d t
 +\frac{1}{2}(X^{\mathfrak{m}}_T)^\top G X^{\mathfrak{m}}_T
\bigg],
    \end{split}
\end{align}
where $X^{\mathfrak{m}}$ satisfies   the state dynamics \eqref{eq:reg_state_open_loop}.
Here
 $Q,S, R$  are given matrix-valued functions of proper dimensions,
 $G\in \sR^{d \t d} $ and  
 $\rho\ge  0$ are given constants, $(\ol{\mathfrak{m}}_t)_{t\in [0,T]}$ are given   measures on $\sR^k$,
and  
for each 
$t\in [0,T]$, 
 $\cH(\cdot \Vert \ol{\mathfrak{m}}_t):
\cP(\sR^k)\to [0,\infty]$ 
is the relative  entropy 
with respect to $\ol{\mathfrak{m}}_t$ such that  
 for all $m \in \cP(\sR^k)$,
$$
  \cH(m \Vert \ol{\mathfrak{m}}_t)
=\begin{cases}
\int_\sR \ln \big(\frac{m(\d a)}{\ol{\mathfrak{m}}_t(\d a)}\big)\,m(\d a), 
& \textnormal{$ m$   is absolutely continuous with respect to $\ol{\mathfrak{m}}_t$,}
\\
\infty,
& \textnormal{otherwise.}
\end{cases}
 $$
 Note that   the cost \eqref{eq:reg_cost_open_loop} 
is aggregated with respect to the control distribution  $\mathfrak{m}_t$  from which   the random  actions are sampled.
The entropy    $ \cH(\cdot\| \ol{\mathfrak{m}}_t)$ 
serves as a regularisation term 
 to encourage the minimiser of \eqref{eq:reg_cost_open_loop} to be close to the provided reference measures $(\ol{\mathfrak{m}}_t)_{t\in [0,T]}$,
and the  weight parameter $\rho\ge 0$ controls the strength of this regularisation.

 {The entropy-regularised control problem \eqref{eq:reg_cost_open_loop}, initially introduced in \cite{wang2020reinforcement}, represents a natural extension of the well-established regularised MDPs (see e.g., \cite{geist2019theory, mei2020global}) into the continuous domain. 
Common choices of $(\ol{\mathfrak{m}}_t)_{t\in [0,T]}$  in the existing literature
include Gibbs measures \cite{vsivska2020gradient}
and the Lebesgue measure \cite{wang2020reinforcement,wang2020continuous,firoozi2022exploratory}.
}
 
 The following   assumptions
 on the coefficients of  \eqref{eq:reg_state_open_loop}-\eqref{eq:reg_cost_open_loop}
  are   imposed  throughout this paper.

\begin{Assumption}
\phantomsection
\label{assum:coefficident}
\begin{enumerate}[(1)]
\item\label{assum:integrability}
$T>0$, 
$\xi_0\in L^2(\Om;\sR^d)$,
$A\in L^1(0,T;\sR^{d\t d})$,
$B\in L^2(0,T;\sR^{d\t k})$,
$C\in L^2(0,T;\sR^{d\t d})$, 
$D\in L^\infty(0,T;\sR^{d\t k})$,
$Q\in L^1(0.T;\sS^{d})$,
$S\in L^2(0.T;\sR^{k\t d})$,
$R\in L^\infty(0,T; \sS^k)$ 
and $G\in \sS^d$.
\item \label{item:regulariser}
$\rho>0$,
   $\ol{\mathfrak{m}}_t=\cN(0,\bar{V}_t)$ for all $t\in [0,T]$,
   $\bar{V} \in L^\infty(0,T; \sS^k_+)$
and     
$\bar{V} \succeq \delta I_k$ for some $\delta>0$.

\end{enumerate}
\end{Assumption}

\begin{Remark}
\label{rmk:regularity_solv}
Condition 
   (H.\ref{assum:coefficident}\ref{assum:integrability})
ensures that 
for all  $\mathfrak{m}\in \cA$,
\eqref{eq:reg_state_open_loop} admits a unique  strong solution
 in $ \cS^2(0,T;\sR^d)$
(see Proposition \ref{prop:state_wp}),
and the associated regularised cost  
 is well-defined. 
Note that 
 (H.\ref{assum:coefficident}\ref{assum:integrability}) allows  the   coefficients $Q, S, R$ and $G$
 to be indefinite or even negative definite
(provided that (H.\ref{assum:nondegeneracy}) holds).
 Such a control problem is often called indefinite stochastic LQ problem 
 (see e.g.~\cite{sun2016open} and the references therein) 
 and has important applications in 
 optimal liquidation \cite{cartea2018algorithmic} and 
 mean-variance portfolio selection  
 \cite{zhou2000continuous} in finance. 
 
 {Condition              (H.\ref{assum:coefficident}\ref{item:regulariser})
assumes that 
for each $t\in [0,T]$,
the reference  measure
$\ol{\mathfrak{m}}_t$ 
in \eqref{eq:reg_cost_open_loop} 
is a Gaussian measure.
This ensures that the optimal strategy of \eqref{eq:reg_state_open_loop}-\eqref{eq:reg_cost_open_loop}
is Gaussian (see \eqref{eq:KV_star}), which  in turn  implies that
\eqref{eq:reg_state_open_loop}-\eqref{eq:reg_cost_open_loop} 
can be reformulated 
as an optimisation problem over Gaussian policies.
A similar reformulation also holds  if 
  $\ol{\mathfrak{m}}_t$ is the Lebesgue measure
\cite{wang2020reinforcement,wang2020continuous,firoozi2022exploratory},
and our proposed policy descent algorithm and its convergence analysis can   be naturally extended to this case. 
}

 \end{Remark}

We also impose 
 the following well-posedness condition of the corresponding Riccati equation 
  for the closed-loop solvability of 
  the (possibly indefinite)  control problem 
 \eqref{eq:reg_state_open_loop}-\eqref{eq:reg_cost_open_loop}.

 \begin{Assumption}
  \label{assum:nondegeneracy}
There exists 
 $P^\star\in C([0,T];\sS^d)$
satisfing the following Riccati equation:
for a.e.~$t \in [0,T]$, 
 \begin{equation}
 \label{eq:riccati_star_ent}
\left\{ 
\begin{aligned}
&(\tfrac{\d}{\d t}{P})_t +A_t^\top P_t + P_t A_t +C_t^\top P_t C_t + Q_t 
	\\
	&\quad 
	-(B^\top_t P_t + D_t^\top P_t C_t+S_t)^\top
	(D^\top_t P_t D_t+R_t+\rho \bar{V}_t^{-1})^{{-1}}(B^\top_t P_t + D_t^\top P_t C_t+S_t)=0;
\\
&	P_T =G,
\end{aligned}
\right.
\end{equation}
and 
$D^\top P^\star D+R+\rho \bar{V}^{-1}\succeq  \widetilde{\delta} I_k$ for some $\widetilde{\delta}>0$.
 \end{Assumption}

 \begin{Remark}
 \label{rmk:solvability_riccati}
 
Condition  (H.\ref{assum:nondegeneracy})
is   
 called  the  strongly regular solvability   of  
\eqref{eq:riccati_star_ent} in  \cite{sun2016open} 
and 
ensures that \eqref{eq:reg_state_open_loop}-\eqref{eq:reg_cost_open_loop}
admits an optimal feedback control.
{Note that  it suffices to  
 assume  the existence of a strongly regular solution,
 as the uniqueness of a strongly regular  solution  to  \eqref{eq:riccati_star_ent} follows directly from Gronwall's inequality  (see \cite{sun2016open} and also \cite[Proposition 7.1, p.~319]{yong1999stochastic}).}
 One can easily show that 
  (H.\ref{assum:nondegeneracy}) holds 
 if the unregularised \eqref{eq:riccati_star_ent}  is strongly regular solvable, i.e., 
\eqref{eq:riccati_star_ent}   with $\rho=0$
admits a solution $P^{\star,0}\in C([0,T];\sS^n)$
and $D^\top P^{\star,0} D+R \succeq  \widetilde{\delta} I_k$. 
 This is due to the fact that 
$P^\star\succeq P^{\star,0} $
(see 
\cite[Theorem 5.3]{sun2016open}), and hence
$D^\top P^\star  D+R+\rho \bar{V}^{-1}\succeq  
D^\top P^{\star,0} D+R$ by    (H.\ref{assum:coefficident}\ref{item:regulariser}).

Moreover, 
by virtue of   the regularisation term $\rho \bar{V}^{-1}$, 
(H.\ref{assum:nondegeneracy})
may hold 
even when the unregualised LQ problem (with $\rho=0$) is not closed-loop solvable. 
This indicates that    the entropy term 
$ \rho\cH(\cdot\| \ol{\mathfrak{m}}_t) $
indeed   regularises the cost landscape. 
 Such a regularisation effect may not hold 
 if   
 the reference measure
 $\ol{\mathfrak{m}}_t$, $t\in [0,T]$,  is chosen as 
  the Lebesgue measure $\cL_k$ on $\sR^k$.
In fact, 
as shown in 
\cite{wang2020reinforcement, wang2020continuous, firoozi2022exploratory},
if  $\ol{\mathfrak{m}}_t=\cL_k$ for all $t\in [0,T]$,
then 
the closed-loop solvability of the regularised problem is equivalent to that of the unregularised problem,
and 
 the entropy term 
will not   modify the cost landscape
over policies.

 \end{Remark}

 Under (H.\ref{assum:coefficident})
 and (H.\ref{assum:nondegeneracy}),
standard verification arguments 
(see, e.g., \cite{yong1999stochastic})
show that 
the optimal control $\mathfrak{m}^\star\in \cA$
of    \eqref{eq:reg_cost_open_loop} 
is of the   form
$
\mathfrak{m}^\star_t = \nu^\star_t(X^{\mathfrak{m}^\star}_t ),
$
where 
$\nu^\star:[0,T]\t \sR^d\to \cP_2(\sR^k)$ satisfies 
for all $(t,x)\in [0,T]\t \sR$,
$\nu^\star_t(x)=\cN(K^\star_t x,V_t^\star)$
and 
\begin{align}
\label{eq:KV_star}
\begin{split}
K^\star_t &= -(D^\top_t P^\star_t D_t+R_t+\rho \bar{V}_t^{-1})^{{-1}}(B^\top_t P^\star_t + D_t^\top P^\star_t C_t+S_t),
\\ 
V^\star_t &=\rho(D^\top_t P^\star_t D_t+R_t+\rho \bar{V}_t^{-1})^{{-1}}.
\end{split}
\end{align}
By (H.\ref{assum:coefficident})
and (H.\ref{assum:nondegeneracy}),
$K^\star\in L^2(0,T;\sR^{k\t d})$, 
$V^\star\in L^\infty(0,T; {\sS^{k}_+})$
and 
$V^\star\succeq  \eps I_k$ for  some $\eps>0$.
{Note that the optimality of $\mathfrak{m}^\star$ in $\cA$
implies that the policy $\nu^\star$ is optimal among all Markovian feedback controls
$\nu:[0,T]\times \sR^d\to \cP_2(\sR^k)$
for which 
 the resulting open-loop control $\mathfrak{m}_\cdot= \nu_\cdot(X^{\nu}_\cdot)$ is square integrable. Here, $X^{\nu}$ denotes the state dynamics controlled by $\nu$, as defined  in \eqref{eq:reg_state_closed_loop}.  

}
\subsection{Optimisation   over Gaussian policies and landscape analysis}
\label{sec:landscape}

Motivated by 
the optimal    Gaussian policy    $\nu^\star$
in \eqref{eq:KV_star},
this section   reformulates  \eqref{eq:reg_state_open_loop}-\eqref{eq:reg_cost_open_loop} 
 as an equivalent minimisation problem over Gaussian policies,
 and presents   key properties of the optimisation landscape $\cC:\Theta\to \sR$.
 The proofs of these properties will be given  in Section \ref{sec:proof_cost_lanscape}.

\paragraph{Policy optimisation.}
 
Let $\Theta$ be the following parameter space 
$$
\Theta\coloneqq 
\left\{\theta= (K,V)\in \cB(0,T; \sR^{k\t d}\t \sS^k_+) 
\,\Big\vert\,
\|K\|_{L^2}<\infty, 
\;
\eps I_k\preceq V\preceq \tfrac{1}{\eps} I_k
\; \textnormal{for some $\eps>0$}
\right\},
  $$
  and $\cV$ be the  
space of   Gaussian policies parameterised by $\Theta$: 
\bb
\label{eq:Gaussian_V}
\cV \coloneqq
\left\{\nu^\theta: [0,T]\t \sR^d
\ni (t,x)\mapsto 
 \cN(K_tx,V_t)
\in \cP(\sR^k)
\,\Big\vert\,
 \theta= (K,V)\in \Theta
 \right\}.\footnotemark
\ee
  \footnotetext{
As  
 $\rho>0$,
 we require the Gaussian policies in  $\cV$ 
 to have   nondegenerate covariances. 
If $\rho=0$,
one can restrict   admissible policies 
to be $\nu^\theta_t (x)=\cN(K_t x,0)$. 
Our analysis and results can be naturally extended to this setting.
}%
We shall  identify 
$\nu^\theta\in \cV$ with its parameter
$\theta=(K,V)\in \Theta$.
For each $\nu^\theta\in \cV$, 
consider  the associated controlled  
 dynamics (cf.~\eqref{eq:reg_state_open_loop}):
\bb\label{eq:reg_state_closed_loop}
\d X_t =\Phi_t(X_t, \nu^\theta_t(X_t) ) \, \d t+ \Gamma_t(X_t, \nu^\theta_t(X_t) ) \, \d W_{t}, \q t\in [0,T];
\q X_0=\xi_0,
\ee
with $\Phi$ and $\Gamma$ defined in \eqref{eq:reg_coefficient},
and let 
 $X^\theta\in \cS^2(0,T;\sR^d)$ be the 
 unique solution to 
\eqref{eq:reg_state_closed_loop} 
(see  Proposition \ref{prop:state_wp}).
 Then  we consider  minimising the following   cost functional:
 \begin{align}\label{eq:reg_cost_closed_loop}
    \begin{split}
   %
\cC(\theta)\coloneqq 
 & \sE\bigg[
\int_0^T\int_{\sR^k}\left( 
\frac{1}{2}
\left \la \begin{pmatrix}
Q_t &S^\top_t
\\
S_t & R_t
\end{pmatrix}
  \begin{pmatrix}
X^{\theta}_t
\\
a 
\end{pmatrix},
  \begin{pmatrix}
X^{\theta}_t
\\
a 
\end{pmatrix}
\right\ra   \,\nu^\theta_t(X^\theta_t;\d a) + \rho\cH(\nu^\theta_t(X^\theta_t)\| \ol{\mathfrak{m}}_t) \right)\d t
\\
&\q +\frac{1}{2}(X^{\theta}_T)^\top G X^{\theta}_T
\bigg]
    \end{split}
\end{align}
over all $\theta\in \Theta$, or equivalently all $\nu^\theta\in \cV$.
It is clear that
 the cost   $\cC$ is minimised at $\theta^\star= (K^\star,V^\star)$ defined in 
\eqref{eq:KV_star},
and the minimum value $\inf_{\theta\in \Theta} \cC(\theta)$ is the   minimum cost of 
 \eqref{eq:reg_state_open_loop}-\eqref{eq:reg_cost_open_loop}.
 
 \paragraph{Optimisation landscape.}

To investigate the regularity  of the map 
$\cC:\Theta\to \sR$, 
we introduce two important quantities: 
for each $\theta=(K,V)\in \Theta$,
let 
$P^\theta\in C([0,T];\sS^d)$ 
be the   solution to 
following (backward) Lyapunov equation:
\begin{align}
\label{eq:lyapunov_reg}
\begin{split}
(\tfrac{\d }{\d t}P)_t + & (A_t+B_tK_t)^\top P_t + P_t^\top (A_t+B_tK_t) 
+(C_t+D_tK_t)^\top P_t (C_t+D_tK_t) 
\\
+ & K_t^\top (R_t+\rho\bar{V}_t^{{-1}})K_t
+ S_t^\top K_t+K_t^\top S_t
 + Q_t = 0,
\quad \textnormal{a.e.~$t \in [0,T]$}; 
\quad 
P_T  = G,
\end{split}
\end{align}
and let $\Sigma^\theta\in C([0,T]; \ol{\sS^d_{+}})$
be the   solution to the following Lyapunov equation:
for {a.e.~$t \in [0,T]$, 
\begin{align}
	\begin{split}\label{eq:lq_sde_K_reg_cov}
			(\tfrac{\d }{\d t}{\Sigma})_t=& (A_t+B_tK_t)\Sigma_t+ \Sigma_t(A_t+B_tK_t)^\top +  (C_t+D_tK_t)\Sigma_t(C_t+D_tK_t)^\top +  D_t V_t D_t^\top, \\
		\Sigma_0=&\mathbb{E}[\xi_0\xi^\top_0].
	\end{split}
\end{align}
Under (H.\ref{assum:coefficident}),
$P^\theta$ and $\Sigma^\theta$  are  well-defined
by  standard well-posedness results of linear differential equations. 
Note that $P^\theta$ depends only on $K$ and is independent of $V$.
Moreover, 
 let  $X^\theta $ be the state process  governed by \eqref{eq:reg_state_closed_loop},
then 
$\Sigma^\theta_t =\sE[X^\theta_t(X^\theta_t)^\top]$
for all $t\in [0,T]$,\footnotemark
due to a straightforward application of 
  It\^{o}'s formula to 
$t\to  X^\theta_t(X^\theta_t)^\top$ 
and the definition \eqref{eq:reg_coefficient}
(see also  Lemma  \ref{lemma:representation_KV}). 

\footnotetext{
{Given a state variable $X_t$,
the second-moment matrix $\Sigma_t=\sE[X_tX^\top_t]$ is often referred to as the state covariance matrix 
in the reinforcement learning literature (see e.g.,  \cite{fazel2018global, hambly2020policy}).
We follow this convention throughout this paper. 
}
}

Based on the notation $P^\theta$ and $\Sigma^\theta$,
the following proposition characterises the  Gateaux derivatives 
of $\cC$ at each $\theta\in \Theta$. 
The proof relies on first  reformulating the 
minimisation problem \eqref{eq:reg_cost_closed_loop}
into a 
deterministic control problem for 
$\Sigma^\theta$,
and then applying the Pontryagin  optimality principle. 

\begin{Proposition}
\label{prop:Gateaux}
Suppose (H.\ref{assum:coefficident}) holds.
For each $\theta\in \Theta$,
let   $P^\theta\in C([0,T];\sS^d)$ satisfy     \eqref{eq:lyapunov_reg},
and let 
 $\Sigma^\theta\in C([0,T]; \ol{\sS^d_{+}})$
 satisfy  \eqref{eq:lq_sde_K_reg_cov}. 
Then for all $\theta, \theta'\in \Theta$,
\begin{align*}
\frac{\d}{\d \eps} \cC(K+\eps K', V)\Big|_{\eps=0}
&=\int_0^T  \la 
\cD_K(\theta)_t\Sigma^\theta_t,
K'_t
\ra\,\d t,
\\
\frac{\d}{\d \eps} \cC(K, V+\eps( V'-V))\Big|_{\eps=0}
&=\int_0^T  \la 
\cD_V(\theta)_t,
V'-V
\ra\,\d t,
\end{align*}
where 
for a.e.~$t\in [0,T]$,
  \begin{align}
\cD_K(\theta)_t&\coloneqq   B_t^\top P^\theta_t
+D_t^\top P^\theta_t(C_t+D_tK_t) 
+S_t+
 ( R_t + {\rho}\bar{V}_t^{{-1}})K_t,
 \label{eq:D_K}
 \\
\cD_V(\theta)_t&\coloneqq
 \frac{1}{2}
 (D_t^\top P^\theta_t D_t
+ R_t
+{\rho}(\bar{V}_t^{{-1}}-{V}^{{-1}}_t)).
 \label{eq:D_V}
\end{align}
\end{Proposition}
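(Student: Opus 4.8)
The plan is to reduce the stochastic minimisation to a finite-dimensional deterministic control problem for the matrix curve $\Sigma^\theta$, and then read off the Gateaux derivatives through the Pontryagin (adjoint) calculus, with $P^\theta$ identified as the costate. \emph{(Step 1: reduction.)} Since $\nu^\theta_t(x;\d a)=\cN(K_tx,V_t)$ satisfies $\int a\,\nu^\theta_t(x;\d a)=K_tx$ and $\int aa^\top\,\nu^\theta_t(x;\d a)=V_t+K_txx^\top K_t^\top$, I would integrate out the action in the quadratic running cost and take expectations, using $\Sigma^\theta_t=\sE[X^\theta_t(X^\theta_t)^\top]$. For the entropy I would use the closed form
\[
\cH\big(\cN(K_tx,V_t)\,\|\,\cN(0,\bar V_t)\big)=\tfrac12\Big(\tr(\bar V_t^{-1}V_t)+x^\top K_t^\top\bar V_t^{-1}K_t x-k+\ln\tfrac{\det\bar V_t}{\det V_t}\Big),
\]
whose expectation again reduces to a function of $(\Sigma^\theta_t,K_t,V_t)$. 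This rewrites the objective as $\cC(\theta)=\int_0^T\ell_t(\Sigma^\theta_t,K_t,V_t)\,\d t+\tfrac12\tr(G\Sigma^\theta_T)$ for an explicit running cost $\ell_t$, subject to the Lyapunov dynamics \eqref{eq:lq_sde_K_reg_cov}.

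\emph{(Step 2: adjoint.)} I would form the Hamiltonian $\mathbb{H}_t(\Sigma,K,V,p)=\ell_t(\Sigma,K,V)+\la p,F_t(\Sigma,K,V)\ra$, where $F_t$ is the right-hand side of \eqref{eq:lq_sde_K_reg_cov}, and verify that the costate solving $\dot p_t=-\partial_\Sigma\mathbb{H}_t$ with $p_T=\tfrac12G$ is exactly $p=\tfrac12P^\theta$. Indeed, computing $\partial_\Sigma\mathbb{H}_t$ and using the symmetry of $P^\theta$ reproduces the Lyapunov equation \eqref{eq:lyapunov_reg}, the factor $\tfrac12$ matching $P_T=G$ against $\partial_\Sigma[\tfrac12\tr(G\Sigma_T)]=\tfrac12G$.

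\emph{(Step 3: differentiation.)} For the $V$-derivative, only $\tfrac12\tr(R_tV)+\tfrac{\rho}{2}(\tr(\bar V_t^{-1}V)-\ln\det V)$ in $\ell_t$ and the term $D_tVD_t^\top$ in $F_t$ depend on $V$; using $\partial_V\ln\det V=V^{-1}$ and $\partial_V\tr(D_t^\top p_tD_t V)=D_t^\top p_tD_t$ gives $\partial_V\mathbb{H}_t=\tfrac12(R_t+\rho(\bar V_t^{-1}-V_t^{-1}))+\tfrac12D_t^\top P^\theta_tD_t=\cD_V(\theta)_t$. For the $K$-derivative, collecting the $K$-linear and $K$-quadratic contributions from both $\ell_t$ and $\la p,F_t\ra$ yields $\partial_K\mathbb{H}_t=\big(B_t^\top P^\theta_t+D_t^\top P^\theta_t(C_t+D_tK_t)+S_t+(R_t+\rho\bar V_t^{-1})K_t\big)\Sigma^\theta_t=\cD_K(\theta)_t\Sigma^\theta_t$, the factor $\Sigma^\theta_t$ appearing because $K$ always multiplies the state second-moment. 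The Pontryagin envelope identity then delivers the two displayed formulas.

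\emph{(Main obstacle.)} The substantive work is the rigorous justification that $\eps\mapsto\Sigma^{(K+\eps K',V)}$ (resp.\ the analogous perturbation in $V$) is differentiable at $\eps=0$ with derivative $\delta\Sigma$ solving the linearised Lyapunov ODE, and that this derivative may be interchanged with the time integral and the expectation. This needs stability estimates for the Lyapunov equations under the weak integrability of (H.\ref{assum:coefficident}), together with the lower bound $V\succeq\eps I_k$ from $\Theta$ to control the nonlinear entropy terms $V^{-1}$ and $\ln\det V$. The envelope step is then made rigorous by integrating $\tfrac{d}{dt}\tr(P^\theta_t\,\delta\Sigma_t)$ over $[0,T]$, which eliminates $\delta\Sigma$ in favour of the adjoint and reduces the total derivative to the pointwise Hamiltonian gradients computed above.
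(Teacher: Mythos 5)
Your proposal is correct and follows essentially the same route as the paper: rewrite $\cC$ as a deterministic control problem for $\Sigma^\theta$ with the Lyapunov dynamics \eqref{eq:lq_sde_K_reg_cov}, identify the costate of the associated Hamiltonian as $\tfrac{1}{2}P^\theta$ (with terminal value $\tfrac{1}{2}G$), and read off the two Gateaux derivatives as $\partial_K H$ and $\partial_V H$, exactly as in the paper's computation. The only difference is at the step you flag as the main obstacle: the paper does not verify the differentiability of $\eps\mapsto\Sigma^\theta$ and the envelope identity by hand, but instead invokes a standard Pontryagin-type result (Corollary 4.11 of Carmona's lecture notes), so your sketched direct argument via $\tfrac{\d}{\d t}\tr(P^\theta_t\,\delta\Sigma_t)$ would simply reprove that cited lemma.
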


We then estimate the regularity  of 
$\cC:\Theta\to \sR$  by using  the gradient terms 
$\cD_K(\theta)$ and $\cD_V(\theta)$.
The following proposition proves that the   functional $\cC$ satisfies  
a non-uniform {\L}ojasiewicz condition 
in $\theta$. 
As $\cC$ is typically nonconvex in $K$
 (see Proposition \ref{prop:non_coercive}),
such a   {\L}ojasiewicz condition 
is  critical for the  global convergence   of gradient-based algorithms. 

\begin{Proposition}
\label{prop:Lojaiewicz}
Suppose (H.\ref{assum:coefficident}) 
and (H.\ref{assum:nondegeneracy})
hold.
Let $\theta^\star\in \Theta$ be 
defined by \eqref{eq:KV_star}.
For each $\theta\in \Theta$,
let
   $P^\theta\in C([0,T];\sS^d)$ satisfy     \eqref{eq:lyapunov_reg},
 let 
   $\Sigma^\theta \in C([0,T];\ol{\sS^d_+})$ satisfy \eqref{eq:lq_sde_K_reg_cov},
   and let 
   $\cD_K(\theta)$ and 
 $\cD_V(\theta)$ be defined by 
 \eqref{eq:D_K} and \eqref{eq:D_V}, respectively.
Then for all $\theta\in \Theta$,
   \begin{align}
 \label{eq:gd_KV_statement}
 \begin{split}
 \cC(\theta)-\cC(\theta^\star)
& \le
\int_0^T 
\bigg(
\frac{1}{2}  \la
(D^\top_t  P^\theta_t  D_t   +R_t+\rho \bar{V}_t^{-1})^{-1} \cD_K(\theta)_t, \cD_K(\theta)_t\Sigma^{\theta^\star}_t\ra
\\
&\quad \quad 
+\frac{1 }{\rho }\max(\|V^\star_t\|^2_2,  \|  V_t\|^2_2 ) |\cD_V(\theta)_t|^2
\bigg)
\, \d t.
\end{split}
\end{align}
 \end{Proposition}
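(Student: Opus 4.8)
The plan is to obtain an \emph{exact} representation of the suboptimality gap $\cC(\theta)-\cC(\theta^\star)$ as an integral, over the optimal state covariance $\Sigma^{\theta^\star}$, of an advantage-type quantity in $K$ plus a relative-entropy-type quantity in $V$, and then to bound the two pieces by a completion of squares and by a convexity estimate. Throughout I write $M^\theta_t:=D_t^\top P^\theta_t D_t+R_t+\rho\bar V_t^{-1}$ and $E_t:=K_t-K^\star_t$, and note from \eqref{eq:D_V} that $\cD_V(\theta)_t=\tfrac12(M^\theta_t-\rho V_t^{-1})$. First I would record a cost representation: integrating $\tfrac{\d}{\d t}\tr(P^\theta_t\Sigma^\theta_t)$ and using \eqref{eq:lyapunov_reg}--\eqref{eq:lq_sde_K_reg_cov}, all drift and diffusion terms cancel by cyclicity of the trace, giving $\cC(\theta)=\tfrac12\tr(P^\theta_0\Sigma^\theta_0)+\int_0^T(\tfrac12\tr(M^\theta_tV_t)-\tfrac{\rho}{2}\ln\det V_t)\,\d t+c$ with $c$ independent of $\theta$. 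Second, I would establish the monotonicity $P^\theta\succeq P^\star$: evaluating the $K$-closed-loop Lyapunov operator at $P^\star$ and subtracting \eqref{eq:lyapunov_reg}, the linear terms collapse through the optimality identity $M^\star_tK^\star_t+(B^\top_tP^\star_t+D_t^\top P^\star_tC_t+S_t)=0$ from \eqref{eq:KV_star}, leaving that $P^\star-P^\theta$ solves a backward Lyapunov equation with source $E^\top M^\star E\succeq0$ and zero terminal value; pairing with a nonnegative forward covariance forces $P^\star-P^\theta\preceq0$. In particular $M^\theta\succeq M^\star\succeq\widetilde\delta I_k\succ0$ and $U^\theta_t:=\rho(M^\theta_t)^{-1}\preceq\rho(M^\star_t)^{-1}=V^\star_t$.

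The crux is a performance-difference identity. Performing the same completion of squares but evaluating the \emph{optimal} closed-loop operator $\cL^{K^\star}$ at $P^\theta$ (and using $\cL^{K^\star}[P^\star]=0$), the linear terms now assemble precisely into $-E^\top\cD_K(\theta)-\cD_K(\theta)^\top E$ and the quadratic term into $E^\top M^\theta E$; thus $P^\theta-P^\star$ solves the $K^\star$-Lyapunov equation with this source. Pairing with $\Sigma^{\theta^\star}$ (governed by the same dynamics), integrating, and inserting the cost representation for both $\theta$ and $\theta^\star$, the mismatched noise terms recombine via $D^\top(P^\theta-P^\star)D=M^\theta-M^\star$ into a single $M^\theta$, yielding the exact identity $\cC(\theta)-\cC(\theta^\star)=\tfrac12\int_0^T\tr((E^\top\cD_K(\theta)+\cD_K(\theta)^\top E-E^\top M^\theta E)\Sigma^{\theta^\star})\,\d t+\int_0^T(f_t(V_t)-f_t(V^\star_t))\,\d t$, where $f_t(U):=\tfrac12\tr(M^\theta_tU)-\tfrac{\rho}{2}\ln\det U$ is convex with $\nabla f_t(V_t)=\cD_V(\theta)_t$ and minimiser $U^\theta_t$.

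Finally I would bound the two integrands. For the $K$-part, completing the square gives $E^\top\cD_K+\cD_K^\top E-E^\top M^\theta E=\cD_K^\top(M^\theta)^{-1}\cD_K-(E-(M^\theta)^{-1}\cD_K)^\top M^\theta(E-(M^\theta)^{-1}\cD_K)$; since $M^\theta\succ0$ and $\Sigma^{\theta^\star}\succeq0$ the last term has nonnegative trace against $\Sigma^{\theta^\star}$ and is dropped, reproducing exactly $\tfrac12\langle(M^\theta_t)^{-1}\cD_K(\theta)_t,\cD_K(\theta)_t\Sigma^{\theta^\star}_t\rangle$. For the $V$-part, convexity together with $f_t(U^\theta)\le f_t(V^\star)$ gives $f_t(V)-f_t(V^\star)\le f_t(V)-f_t(U^\theta)=\tfrac{\rho}{2}\sum_i(\mu_i-1-\ln\mu_i)$, with $\mu_i$ the eigenvalues of $(U^\theta)^{-1/2}V(U^\theta)^{-1/2}$; I would then combine the scalar bound $\mu-1-\ln\mu\le\tfrac12(\mu-1)^2\max(1,\mu^{-2})$ with the Frobenius lower bounds on $|\cD_V(\theta)|^2=\tfrac{\rho^2}{4}|(U^\theta)^{-1}-V^{-1}|^2$ obtained by conjugating by $V^{-1/2}$ and by $(U^\theta)^{-1/2}$, and use $\|U^\theta\|_2\le\|V^\star\|_2$, to control this by $\tfrac1\rho\max(\|V_t\|_2^2,\|V^\star_t\|_2^2)|\cD_V(\theta)_t|^2$.

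The main obstacle is the performance-difference identity: one must verify that the completion of squares against $P^\theta$ reproduces \emph{exactly} the gradient $\cD_K(\theta)$ of \eqref{eq:D_K} (rather than some other residual), and that the noise terms arising from the mismatch between the $K$- and $K^\star$-dynamics recombine into $M^\theta$, converting the reference matrix from $M^\star$ to $M^\theta$ and letting the $V$-terms close up as $f_t(V)-f_t(V^\star)$. A secondary difficulty is the non-commutativity of $V$ and $U^\theta$ in the $V$-estimate, where securing the stated constant $\tfrac1\rho$ requires splitting the spectrum into $\{\mu_i\ge1\}$ and $\{\mu_i<1\}$ and applying the matching conjugation on each part.
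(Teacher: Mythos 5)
Your overall skeleton is the paper's: an exact representation of $\cC(\theta)-\cC(\theta^\star)$ as an integral of a quadratic-in-$(K-K^\star)$ term against $\Sigma^{\theta^\star}$ plus the difference $\ell_t(V_t,P^\theta_t)-\ell_t(V^\star_t,P^\theta_t)$ (this is Lemma \ref{lemma:performance_gap} specialised to $\theta'=\theta^\star$), followed by the completion of squares \eqref{eq:quadratic_lower_bound} for the $K$-part. Your derivation of the identity is a genuine alternative route: instead of the paper's value-function/It\^{o}/Hamiltonian argument, you pair the $K^\star$-closed-loop Lyapunov equation satisfied by $P^\theta-P^\star$ (whose source $E^\top\cD_K(\theta)+\cD_K(\theta)^\top E-E^\top M^\theta E$, $E=K-K^\star$, $M^\theta=D^\top P^\theta D+R+\rho\bar V^{-1}$, is exactly the paper's Lemma \ref{lemma:laypunov_difference}) with $\Sigma^{\theta^\star}$, and the recombination $D^\top(P^\theta-P^\star)D=M^\theta-M^\star$ that converts the reference matrix from $M^\star$ to $M^\theta$ is correct; so is your preliminary monotonicity $P^\theta\succeq P^\star$ (via $\cD_K(\theta^\star)=0$), which makes explicit the positivity of $M^\theta$ that the paper leaves implicit. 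Up to and including the $K$-bound, your proof is sound and buys a purely deterministic derivation at the cost of re-deriving what the paper's Lemma \ref{lemma:performance_gap} already packages.

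The genuine gap is in the $V$-estimate. Writing $U^\theta_t=\rho(M^\theta_t)^{-1}$, $X_t=(U^\theta_t)^{-1}-V_t^{-1}$ and $\mu_i$ for the eigenvalues of $W=(U^\theta)^{-1/2}V(U^\theta)^{-1/2}$, your two conjugation bounds each control their eigenvalue sum against the \emph{whole} Frobenius norm: $\sum_i(\mu_i-1)^2\le\|V\|_2^2|X|^2$ and $\sum_i(1-\mu_i^{-1})^2\le\|U^\theta\|_2^2|X|^2$. Using one on $\{\mu_i\ge 1\}$ and the other on $\{\mu_i<1\}$ and adding double-spends the budget $|X|^2$ and only yields $f_t(V_t)-f_t(U^\theta_t)\le\tfrac1\rho\big(\|V_t\|_2^2+\|U^\theta_t\|_2^2\big)|\cD_V(\theta)_t|^2$, up to a factor $2$ worse than the stated constant $\tfrac1\rho\max(\|V_t\|_2^2,\|V^\star_t\|_2^2)$. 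Your proposed repair --- splitting the spectrum and ``applying the matching conjugation on each part'' --- would require $|X|^2$ to decompose across the spectral subspaces of $W$, and it does not: with $X_\pm=(U^\theta)^{-1/2}(I-W^{-1})Q_\pm(U^\theta)^{-1/2}$ for the spectral projections $Q_\pm$, the cross term satisfies $2\tr(X_+X_-)\le 0$, so $|X|^2\le|X_+|^2+|X_-|^2$, the \emph{opposite} of what the split needs. The inequality you are after is in fact true, but the paper proves it by a different mechanism that sidesteps eigenvalues and commutativity altogether: the integral-remainder expansion \eqref{eq:ell_expansion} of $\ell_t(\cdot,P^\theta_t)$ along the segment from $V_t$ to $V^\star_t$, the lower bound $\tr(A\Delta B\Delta)\ge\lambda_{\min}(A)\lambda_{\min}(B)|\Delta|^2$ giving strong convexity with modulus $\tfrac{\rho}{2}\max(\|V_t\|_2,\|V^\star_t\|_2)^{-2}$ (the segment maximum of $\lambda_{\max}$ equals $\max(\|V_t\|_2,\|V^\star_t\|_2)$ by convexity), and then the same scalar completion of squares as in \eqref{eq:quadratic_lower_bound} in the direction $V^\star_t-V_t$. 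Substituting that segment argument for your eigenvalue computation (applied to the pair $(V_t,V^\star_t)$, or equally to $(V_t,U^\theta_t)$) closes the gap and recovers the stated constant.
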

 
The next proposition proves that for any $\theta,\theta'\in \Theta$,
the cost difference $\cC(\theta')-\cC(\theta)$ can be upper bounded by the first and second order terms in $\theta'-\theta$. Such a  property is often referred to as the 
``almost smoothness" condition in the
 literature on PG methods
(see e.g., \cite{fazel2018global, hambly2020policy, zhang2021policy}). 

\begin{Proposition}
\label{prop:Lipshcitz_smooth}
Suppose (H.\ref{assum:coefficident}) holds.
For each $\theta\in \Theta$,
let
   $P^\theta\in C([0,T];\sS^d)$ satisfy     \eqref{eq:lyapunov_reg},
     let 
   $\Sigma^\theta \in C([0,T];\ol{\sS^d_+})$ satisfy \eqref{eq:lq_sde_K_reg_cov},
   and let 
   $\cD_K(\theta)$ and 
 $\cD_V(\theta)$ be defined by 
 \eqref{eq:D_K} and \eqref{eq:D_V}, respectively.
Then for all $\theta,\theta'\in \Theta$,
 \begin{align*}
 \cC(\theta')-\cC(\theta)
 &\le 
\int_0^T 
\bigg(
\la  K'_t-K_t, \cD_K(\theta)_t\Sigma^{\theta'}_t\ra 
+
\frac{1}{2}\la K'_t-K_t , (D^\top_t  P^\theta_t  D_t   +R_t+\rho \bar{V}_t^{-1}) (K'_t-K_t)\Sigma^{\theta'}_t\ra
\\
&\quad +\la 
\cD_V(\theta)_t, V'_t-V_t\ra 
+
\frac{\rho}{4} 
\frac{ | V'_t-V_t  |^2}{
 \min(\lambda^2_{\min}(V_t),    \lambda^2_{\min}(V'_t) )
 }
\bigg)\, \d t.
\end{align*}

 \end{Proposition}

 Note that the {\L}ojasiewicz   condition  in Proposition
\ref{prop:Lojaiewicz} and 
the smoothness condition in Proposition 
 \ref{prop:Lipshcitz_smooth} are local properties. 
The estimates therein depend explicitly on $P^\theta$ and $\Sigma^\theta$,
 which admit no     uniform   bound  
 over the unbounded parameter set  $  \Theta$.  
 For PG methods with finite-dimensional parameter spaces, 
this difficulty is often  overcome by 
first proving the sublevel set    $ \{\theta\in \Theta\mid \cC(\theta)<\beta\}$
is bounded for any $\beta>0$, 
and then designing algorithms
 whose iterates remain  in a fixed   sublevel set (see e.g.,~\cite{fazel2018global,
gravell2020learning,
 hambly2020policy}). 
 However, the following example shows that  
 in the   setting with continuous-time policies,
 the cost is typically noncoercive,\footnotemark
 \footnotetext{ Let $(X,\|\cdot\|)$ be a normed space. A function $f:X\to \sR$ is called 
 coercive if $\lim_{\|x\|\to \infty}f(x)=\infty$.}
 and hence
the above  argument  cannot be applied.  
  The proof follows from a straightforward computation,
  and is given in Appendix  \ref{appendix:technical}.

\begin{Proposition}
\label{prop:non_coercive}
Let $\cC:L^2(0,1; \sR)\to \sR$ be such that  
for all $K\in L^2(0,1;\sR)$,
 \begin{align}
\cC(K)\coloneqq 
 & 
\int_0^1 
\left(K_tX_t\right)^2\, \d t,
\quad \textnormal{
with $X_t=1+\int_0^t  K_s X_s\, \d s$,\; $t\in [0,1]$.
}
\end{align}
Then $\cC:L^2(0,1; \sR)\to \sR $  is neither coercive nor   quasiconvex. 
In particular, 
  let $K^\eps\in L^2(0,1;\sR)$, $\eps>0$,
be such that $K^\eps_t=- (1+\eps-t)^{-1}$ for all $t\in [0,1]$. Then $\lim_{\eps\to 0}\|K^\eps\|_{L^1}=\infty$ and 
$\sup_{\eps>0} \cC(K^\eps) =1$.
Moreover, there exists $\eps_0>0$ such that 
for all $\eps\in (0,\eps_0]$, 
$\cC(0.5 K^{\eps})>\max\{\cC(\boldsymbol{0}),\cC(K^{\eps})\}$,
with $\boldsymbol{0}$ being the zero function.

\end{Proposition}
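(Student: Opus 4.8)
The plan is to exploit the fact that the state equation is a scalar linear ODE that can be solved in closed form. Since $K\in L^2(0,1;\sR)\subset L^1(0,1;\sR)$, the integral $\int_0^t K_s\,\d s$ is finite, and the unique solution of $X_t = 1 + \int_0^t K_s X_s\,\d s$ is $X_t = \exp\!\big(\int_0^t K_s\,\d s\big)$. In particular $X$ is absolutely continuous and strictly positive, and $K_t X_t = \dot X_t$, so that $\cC(K) = \int_0^1 (K_t X_t)^2\,\d t$ reduces to an explicit integral once $K$ is specified. Everything below is a direct evaluation of this integral along the prescribed family.

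First I would establish noncoercivity by analysing $K^\eps_t = -(1+\eps-t)^{-1}$. A direct computation gives $\int_0^t K^\eps_s\,\d s = \ln\frac{1+\eps-t}{1+\eps}$, hence $X^\eps_t = \frac{1+\eps-t}{1+\eps}$ and $K^\eps_t X^\eps_t = -\frac{1}{1+\eps}$ for every $t\in[0,1]$. Therefore $\cC(K^\eps) = (1+\eps)^{-2}$, which stays strictly below $1$ and increases to $1$ as $\eps\to 0^+$, giving $\sup_{\eps>0}\cC(K^\eps)=1$. On the other hand $\|K^\eps\|_{L^1} = \int_0^1 (1+\eps-t)^{-1}\,\d t = \ln\frac{1+\eps}{\eps}\to\infty$ (and likewise $\|K^\eps\|_{L^2}^2 = \frac{1}{\eps}-\frac{1}{1+\eps}\to\infty$) as $\eps\to 0^+$. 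Thus the sublevel set $\{K\mid \cC(K)\le 1\}$ is unbounded in $L^2(0,1;\sR)$, so $\cC$ is not coercive.

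Next I would refute quasiconvexity by testing the midpoint inequality on the pair $(\boldsymbol{0}, K^\eps)$, noting that $\tfrac12 K^\eps = \tfrac12\boldsymbol{0} + \tfrac12 K^\eps$ is their convex combination. With $K\equiv 0$ one has $X\equiv 1$ and $\cC(\boldsymbol{0})=0$, so $\max\{\cC(\boldsymbol{0}),\cC(K^\eps)\}=(1+\eps)^{-2}$. For the midpoint, the same exponential formula with $K$ replaced by $\tfrac12 K^\eps$ yields $X_t = \big(\tfrac{1+\eps-t}{1+\eps}\big)^{1/2}$ and $\tfrac12 K^\eps_t X_t = -\tfrac12 (1+\eps-t)^{-1/2}(1+\eps)^{-1/2}$, whence $\cC(\tfrac12 K^\eps) = \frac{1}{4(1+\eps)}\ln\frac{1+\eps}{\eps}$. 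As $\eps\to 0^+$ this diverges to $+\infty$ while $(1+\eps)^{-2}\to 1$, so there exists $\eps_0>0$ such that $\cC(0.5K^\eps) > (1+\eps)^{-2}=\max\{\cC(\boldsymbol{0}),\cC(K^\eps)\}$ for all $\eps\in(0,\eps_0]$. A single violating segment suffices to break quasiconvexity, completing the argument.

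There is no serious obstacle here: the computations are elementary once the closed-form solution $X_t=\exp(\int_0^t K_s\,\d s)$ is in hand. The only points needing a little care are checking that $\int_0^t K^\eps_s\,\d s$ and the subsequent integrals are finite on $[0,1]$ (the pole of $K^\eps$ at $t=1+\eps$ lies strictly outside $[0,1]$ for every $\eps>0$), keeping the factor $\tfrac12$ consistent in both the exponent and the prefactor when evaluating the midpoint cost, and invoking the characterisation of quasiconvexity through the max-inequality along segments so that exhibiting one bad segment is enough.
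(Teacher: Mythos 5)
Your proposal is correct and follows essentially the same route as the paper's proof: the identical family $K^\eps_t=-(1+\eps-t)^{-1}$ with $X^\eps_t=\frac{1+\eps-t}{1+\eps}$, giving $\cC(K^\eps)=(1+\eps)^{-2}$ and $\|K^\eps\|_{L^1}=\ln\frac{1+\eps}{\eps}\to\infty$, and the same midpoint computation $\cC(0.5K^\eps)=\frac{1}{4(1+\eps)}\ln\frac{1+\eps}{\eps}\to\infty$ against the pair $(\boldsymbol{0},K^\eps)$ to refute quasiconvexity. Your only additions — writing out the closed form $X_t=\exp\big(\int_0^t K_s\,\d s\big)$ and explicitly computing $\|K^\eps\|_{L^2}^2=\frac{1}{\eps}-\frac{1}{1+\eps}$, which is the norm actually relevant for noncoercivity on $L^2$ — are harmless refinements of the paper's argument.
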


\subsection{Policy gradient method and its convergence analysis}
\label{sec:PGM} 
This section proposes   a 
geometry-aware PG method for \eqref{eq:KV_star} 
that preserves an a-priori bound,
and 
proves  its global linear convergence 
based on the landscape properties in Section \ref{sec:landscape}.


\paragraph{Geometry-aware
policy gradient method.}
For each initial guess $\theta^0=(K^0,V^0)\in \Theta$
and 
   stepsize $\tau>0$, 
 consider   $(\theta^n)_{n\in \sN }\subset \cB(0,T; \sR^{k\t d}\t \sS^k) $
 such that for all $n\in \sN_0$, 
\begin{align}
\label{eq:NPG}
K^{n+1}_t&=K^n_t-\tau \cD_K(\theta^n)_t, 
\quad 
V^{n+1}_t=V^n_t-\tau
\cD^{\rm bw}_V(\theta^n)_t,
\q \textnormal{a.e.~$t\in [0,T]$},
\end{align}
with 
\bb\label{eq:gradient_BW}
\cD^{\rm bw}_V(\theta^n)_t= 
\cD_V(\theta^n)_t V_t^n+V^n_t \cD_V(\theta^n)_t,
\footnotemark
\ee
where $\cD_K(\theta)$ and $\cD_V(\theta)$ are defined 
by 
 \eqref{eq:D_K} and 
  \eqref{eq:D_V}, respectively.
Here 
 we update $K$ and $V$ with the same  stepsize $\tau$  for the  clarity of presentation,
  but the   results can be naturally extended to the setting where different constant stepsizes are adopted to update 
$K$ and $V$. 

\footnotetext{
For an arbitrary stepsize $\tau>0$,
$(V^{n})_{n\in \sN}$ may  not be positive definite and hence may not be invertiable.
In this case, $\cD_V$ is defined by  replacing ${V}^{{-1}}_t$ in   \eqref{eq:D_V} with the (symmetric) Moore-Penrose inverse of $V_t$.
We   prove that $(\theta^n)_{n\in \sN }\subset \Theta$
for all sufficiently small stepsizes
(see  Proposition \ref{prop:uniform_bound}).
 }
 
Algorithm \eqref{eq:NPG}
normalises the    (Fr\'{e}chet) derivatives of $\theta^n$
(cf.~Proposition \ref{prop:Gateaux})
to incorporate  
 the local  geometry of the parameter space. 
Specifically,
it  updates  
   $(K^n)_{n\in \sN}$  
 by  the   steepest descent     on the   manifold of Gaussian policies
  endowed with the Fisher information metric
  (also known as the natural gradient). 
To see this, for each $n\in \sN_0$,
  consider the following   
  natural gradient 
   update for $K^n$
   (see \cite{kakade2001natural}):
 \bb
 \label{eq:inexact_npg_conts}
{K^{n+1}_t}=  {K^n_t}-\tau   {\cI}({\theta}^n)^{-1}_t  {\nabla_K \cC(\theta^n)_t},\footnotemark
\q \textnormal{a.e.~$t\in [0,T]$,}
\footnotetext
{  For each $A\in \sR^{kd\t kd}$ and $B\in \sR^{k\t d}$,
indexed by $A_{ij,i'j'}$ and $B_{ij}$
with  $i,i'\in \{1,\ldots, k\}$ and 
 $j,j'\in \{1,\ldots, d\}$, 
   we define
   $AB\in \sR^{k \t d}$ with
   $(AB)_{ij}= \sum_{k,l}A_{ij,kl}B_{kl}$.
   This is equivalent to reshaping $B$ 
(with   row-major ordering)
   into a vector,   performing the  standard matrix-vector multiplication,
   and reshaping the result into a   matrix.
}
\ee
where 
$ \nabla_K \cC(\theta^n)
=\cD_K(\theta^n)\Sigma^{\theta^n}$
is the   derivative in $K^n$, 
$ {\cI}({\theta}^n)_t\in \sR^{kd\t kd}$ 
is    the    Fisher information matrix
satisfying
for all $i,i'\in \{1,\ldots, k\}$ and 
 $j,j'\in \{1,\ldots, d\}$, 
 $$
( {\cI}(\theta^n)_t)_{ij,i'j'}  \coloneqq \sE
\left[
\int_{\sR^k}
 \left[\p_{(K^n_t)_{ij} }
 \ln \left(\hat{\nu}^{\theta^n}_t(X^{\theta^n }_t; a)\right)
\p_{(K^n_t)_{i'j'} }
\ln \left (\hat{\nu}^{\theta^n }_t(X^{\theta^n }_t; a)\right)
\right]
\, \hat{\nu}^{\theta^n}_t(X^{\theta^n}_t; a) \, \d a
\right],
 $$
%
 and $\hat{\nu}^{\theta^n}_t(X^{\theta^n}_t; \cdot)$ is the   density of 
 $ \cN(K^n_t X^{\theta^n}_t, I_k)$.
Then by  a  similar computation
as in  \cite{fazel2018global, hambly2021policy}, 
$
 {\cI}(\theta^n)^{-1}_t  \nabla_K \cC(\theta^n)_t
=
{  \nabla_K \cC(\theta^n)_t (\Sigma^{\theta^n}_t)^{-1}}
=\cD_K(\theta^n)_t
 $.

On the other hand, 
\eqref{eq:NPG} updates 
   $(V^n)_{n\in \sN}$  
by the steepest descent  on the matrix   manifold   $\sS^k_+$ endowed with the Bures-Wasserstein   metric
\cite{han2021riemannian}.
It corresponds to   the geometry induced by 
the 2-Wasserstein metric over the space of 
 centered nondegenerate
Gaussian measures. 
By normalising 
$\cD_V$  
  according to  $V$,
  the Riemannian gradient
$\cD^{\rm bw}_V$ in 
 \eqref{eq:gradient_BW}
preserves a pointwise upper and lower  bound of $(V^n)_{n\in \sN}$ without the use of  projection
(see Remark \ref{rmk:implicit_regularisation}).

 \paragraph{Convergence   analysis.}
 The   key challenge  in the convergence analysis 
 of \eqref{eq:NPG} is to establish  a uniform  bound 
 for 
 the corresponding $(P^{\theta^n})_{n\in \sN}$
and $(\Sigma^{\theta^n})_{n\in \sN}$,
as shown in Proposition \ref{prop:uniform_bound}.
  This is achieved by   proving a uniform bound of the iterates $(\theta^n)_{n\in \sN}$ and quantifying the explicit dependence of $\Sigma^\theta$ on $\theta$.
The proof is given in Section \ref{sec:proof_a_priori_bound}
(Propositions \ref{prop:K_bdd},
\ref{prop:V_bound}, and \ref{prop:Sigma_bdd}).

\begin{Proposition}
\label{prop:uniform_bound}
  
 Suppose (H.\ref{assum:coefficident})
  and (H.\ref{assum:nondegeneracy})
 hold.
For each  $\theta\in \Theta$,
let   $P^{\theta}\in C([0,T];\sS^d)$ satisfy  \eqref{eq:lyapunov_reg}
and 
let
   $\Sigma^\theta \in C([0,T];\ol{\sS^d_+})$ satisfy \eqref{eq:lq_sde_K_reg_cov}.
Let $\theta^0\in \Theta$
 and 
  $\ol{\lambda}_{ 0}>0$
 such that $\ol{\lambda}_{0} I_k \succeq D^\top P^{\theta^0} D+R+\rho \bar{V}^{-1}$.
For each 
$\tau>0$,
let $(\theta^n)_{n\in \sN}\subset   \cB(0,T; \sR^{k\t d}\t \sS^k) $ be defined in \eqref{eq:NPG}.
 Then
 \begin{enumerate}[(1)]
 \item 
There exists $\widetilde{C} ,\ol{\lambda}_V,\ul{\lambda}_V>0$
such that 
for all 
$\tau \in (0,1/\ol{\lambda}_0]$,
$n\in \sN_{ 0}$, 
$\|K^n\|_{L^2}\le \widetilde{C}  $
and
$\ul{\lambda}_V I_k
 \preceq V^n
 \preceq \ol{\lambda}_V I_k$.
 \item For all 
$\tau \in (0,2/\ol{\lambda}_0]$,
$n\in \sN_{ 0}$,  $P^{\theta^{n}}\succeq P^{\theta^{n+1}}\succeq P^\star$,
with $P^\star\in C([0,T];\sS^d)$ in  (H.\ref{assum:nondegeneracy}),
 \item
 Assume further that    $\sE[\xi_0\xi_0^\top]\succ0$.
Then  
there exists  $\ol{\lambda}_X,\ul{\lambda}_X> 0$ 
  such that 
for all $\tau \in (0,{1}/\ol{\lambda}_0]$ and $n\in \sN_{  0}$, 
$\ul{\lambda}_X I_d\preceq \Sigma^{\theta^n}\preceq \ol{\lambda}_X I_d$.

 \end{enumerate}

\end{Proposition}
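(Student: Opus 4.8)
The plan is to prove assertion (2) first and then (1) and (3), since the monotonicity of $(P^{\theta^n})_n$ is what yields the uniform two-sided control of
$\widehat{M}^n:=D^\top P^{\theta^n}D+R+\rho\bar{V}^{-1}$
on which every other bound rests. For (2) I would fix $n$ and compare the Lyapunov solutions $P^{\theta^n}$ and $P^{\theta^{n+1}}$ (both independent of $V$). Writing $\cL_K$ for the linear part of \eqref{eq:lyapunov_reg} and using the identities $\cD_K(\theta^n)=(B^\top P^{\theta^n}+D^\top P^{\theta^n}C+S)+\widehat{M}^nK^n$ and $K^{n+1}=K^n-\tau\cD_K(\theta^n)$, a completion of squares shows that $P^{\theta^n}-P^{\theta^{n+1}}$ solves a backward Lyapunov equation with zero terminal value and source $\tau\,\cD_K(\theta^n)^\top(2I_k-\tau\widehat{M}^n)\cD_K(\theta^n)$. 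Likewise $P^{\theta^{n+1}}-P^\star$ solves a Lyapunov equation with source $(K^{n+1}-K^\star)^\top\widehat{M}^\star(K^{n+1}-K^\star)\succeq0$, where $\widehat{M}^\star:=D^\top P^\star D+R+\rho\bar{V}^{-1}\succeq\widetilde{\delta}I_k$ by (H.\ref{assum:nondegeneracy}). Since a backward Lyapunov equation with positive-semidefinite source and zero (resp.\ ordered) terminal data has a positive-semidefinite (resp.\ ordered) solution, I obtain $P^{\theta^{n+1}}\succeq P^\star$ for every $K^{n+1}$, and, by an induction keeping $P^{\theta^n}\preceq P^{\theta^0}$ so that $\widehat{M}^n\preceq\ol{\lambda}_0I_k$ and hence $2I_k-\tau\widehat{M}^n\succeq0$ whenever $\tau\le2/\ol{\lambda}_0$, the full chain $P^{\theta^n}\succeq P^{\theta^{n+1}}\succeq P^\star$. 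This delivers $\widetilde{\delta}I_k\preceq\widehat{M}^n\preceq\ol{\lambda}_0I_k$ uniformly in $n$.

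For the $K$-bound in (1), the update \eqref{eq:NPG} reads $K^{n+1}=(I_k-\tau\widehat{M}^n)K^n-\tau L^n$ with $L^n:=B^\top P^{\theta^n}+D^\top P^{\theta^n}C+S$. For $\tau\le1/\ol{\lambda}_0$ the spectral bounds on $\widehat{M}^n$ give $\|I_k-\tau\widehat{M}^n\|_2\le1-\tau\widetilde{\delta}<1$, while the uniform bound on $P^{\theta^n}$ together with (H.\ref{assum:coefficident}) yields $|L^n_t|\le C_L(t)$ for some $C_L\in L^2(0,T)$ independent of $n$. A pointwise geometric recursion then gives $|K^n_t|\le|K^0_t|+C_L(t)/\widetilde{\delta}$, so that $\|K^n\|_{L^2}\le\|K^0\|_{L^2}+\|C_L\|_{L^2}/\widetilde{\delta}=:\widetilde{C}$.

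The $V$-bounds in (1) split sharply. For the upper bound I would use the congruence identity
\begin{equation*}
V^{n+1}=\big(I_k-\tfrac{\tau}{2}\widehat{M}^n\big)V^n\big(I_k-\tfrac{\tau}{2}\widehat{M}^n\big)-\tfrac{\tau^2}{4}\widehat{M}^nV^n\widehat{M}^n+\tau\rho I_k ,
\end{equation*}
whose last term is $\preceq0$ and whose middle term is $\preceq(1-\tfrac{\tau}{2}\widetilde{\delta})^2\lambda_{\max}(V^n)I_k$; a geometric recursion then bounds $\lambda_{\max}(V^n)$ by $\ol{\lambda}_V:=\max(\lambda_{\max}(V^0),\,4\rho/(3\widetilde{\delta}))$. \emph{The lower bound is the main obstacle.} Here I would read the $V$-update (with $K$, hence $\widehat{M}^n$, frozen within the step) as Bures--Wasserstein gradient descent for the potential $\psi_n(V)=\tfrac12\tr(\widehat{M}^nV)-\tfrac{\rho}{2}\ln\det V$, whose Euclidean gradient is exactly $\cD_V(\theta^n)=\tfrac12(\widehat{M}^n-\rho (V^n)^{-1})$ (cf.\ \eqref{eq:D_V}) and whose unique minimiser $\rho(\widehat{M}^n)^{-1}\in[\tfrac{\rho}{\ol{\lambda}_0}I_k,\tfrac{\rho}{\widetilde{\delta}}I_k]$ sits in a band that is uniform in $n$; the $-\tfrac{\rho}{2}\ln\det V$ barrier supplies the restoring force preventing $\lambda_{\min}(V^n)$ from collapsing. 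The difficulty is that, because $\widehat{M}^n$ and $V^n$ do not commute, one-step eigenvalue estimates only give $\lambda_{\min}(V^{n+1})\gtrsim\tau\rho$, which degenerates as $\tau\to0$ and therefore cannot furnish a $\tau$-independent $\ul{\lambda}_V$; equivalently, the Bures--Wasserstein smoothness modulus of $\psi_n$ blows up like $\rho/\lambda_{\min}(V)^2$, matching the non-uniform term in Proposition \ref{prop:Lipshcitz_smooth}. I thus expect the lower bound to demand the genuinely multi-step barrier argument of Proposition \ref{prop:V_bound}: tracking $\psi_n(V^n)$ and showing the log-determinant barrier confines the iterates to a sublevel set on which $\lambda_{\min}\ge\ul{\lambda}_V>0$ uniformly in $n$ and $\tau$, i.e.\ the implicit regularisation of Remark \ref{rmk:implicit_regularisation}, rather than any na\"ive operator-norm/Gr\"onwall estimate.

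Finally, for (3) I would invoke the now-established bounds $\|K^n\|_{L^2}\le\widetilde{C}$ and $V^n\preceq\ol{\lambda}_VI_k$. The upper bound $\Sigma^{\theta^n}\preceq\ol{\lambda}_XI_d$ follows from Lemma \ref{lemma:Sigma_bdd}, i.e.\ Gr\"onwall applied to $t\mapsto\tr\Sigma^{\theta^n}_t$ in \eqref{eq:lq_sde_K_reg_cov}, since $A$, $BK^n$, $(C+DK^n)$ and $DV^nD^\top$ are then controlled in $L^1$/$L^2$ uniformly in $n$. For the lower bound, which is where $\sE[\xi_0\xi_0^\top]\succ0$ enters, I would drop the two positive-semidefinite terms in \eqref{eq:lq_sde_K_reg_cov} to obtain the Loewner comparison $\Sigma^{\theta^n}_t\succeq\Psi^n_t\,\sE[\xi_0\xi_0^\top]\,(\Psi^n_t)^\top$, where $\Psi^n$ solves $\dot\Psi=(A+BK^n)\Psi$, $\Psi_0=I_d$; since $\|(\Psi^n_t)^{-1}\|_2\le\exp(\|A\|_{L^1}+\|B\|_{L^2}\widetilde{C})$ by Gr\"onwall and Cauchy--Schwarz, this yields a uniform $\ul{\lambda}_XI_d\preceq\Sigma^{\theta^n}$.
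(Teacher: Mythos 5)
Your treatment of Item (2), of the $K$-bound in Item (1), and of Item (3) is correct and essentially coincides with the paper's own argument: the paper's Lemma \ref{lemma:laypunov_difference} produces exactly your completed-square source $\tau\,\cD_K(\theta^n)^\top(2I_k-\tau \widehat{M}^n)\cD_K(\theta^n)$ for $P^{\theta^n}-P^{\theta^{n+1}}$, the bound $P^{\theta^{n+1}}\succeq P^\star$ is obtained there from $\cD_K(\theta^\star)=0$ precisely as you describe (Proposition \ref{prop:K_bdd}), the $K$-recursion with contraction factor $1-\tau\widetilde{\delta}$ matches (and in fact corrects a factor that the paper writes as $1-\tau\ol{\lambda}_0$), and your $\Sigma$-bounds reproduce Lemma \ref{lemma:Sigma_bdd} (Gronwall for the upper bound; $\Sigma^{\theta^n}\succeq \Psi\,\sE[\xi_0\xi_0^\top]\Psi^\top$ with $\|\Psi_t^{-1}\|_2\le \exp(\|A\|_{L^1}+\|B\|_{L^2}\widetilde{C})$ for the lower bound). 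Your congruence identity for the upper bound on $V^n$ is also valid (modulo swapped labels of the ``middle''/``last'' terms) and is a clean variant of the paper's direct estimate $\lambda_{\max}(V^{n+1}_t)\le \lambda_{\max}(I_k-\tau M^n_t)\lambda_{\max}(V^n_t)+\rho\tau$.

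The genuine gap is the lower bound $\ul{\lambda}_V I_k\preceq V^n$, which you never prove: you declare it beyond one-step eigenvalue estimates and defer to a ``multi-step barrier argument of Proposition \ref{prop:V_bound}''. That mischaracterises the paper's proof, which is exactly a one-step estimate followed by a geometric recursion. From $V^{n+1}_t=\tfrac12(I_k-\tau M^n_t)V^n_t+\tfrac12 V^n_t(I_k-\tau M^n_t)+\rho\tau I_k$, the paper asserts $\lambda_{\min}(V^{n+1}_t)\ge\lambda_{\min}(I_k-\tau M^n_t)\lambda_{\min}(V^n_t)+\rho\tau\ge(1-\tau\ol{\lambda}_0)\lambda_{\min}(V^n_t)+\rho\tau$, and an induction yields
\begin{equation*}
\lambda_{\min}(V^n_t)\ \ge\ \Big(\lambda_{\min}(V^0_t)-\tfrac{\rho}{\ol{\lambda}_0}\Big)\big(1-\tau\ol{\lambda}_0\big)^n+\tfrac{\rho}{\ol{\lambda}_0}\ \ge\ \min\Big(\lambda_{\min}(V^0_t),\tfrac{\rho}{\ol{\lambda}_0}\Big),
\end{equation*}
uniformly in $n$ and $\tau\in(0,1/\ol{\lambda}_0]$. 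Your stated reason for abandoning this route --- that a single step ``only gives $\gtrsim\tau\rho$, which degenerates as $\tau\to0$'' --- is a miscalculation: the per-step injection $\rho\tau I_k$ accumulates geometrically, $\rho\tau\sum_{i<n}(1-\tau\ol{\lambda}_0)^i=\tfrac{\rho}{\ol{\lambda}_0}\big(1-(1-\tau\ol{\lambda}_0)^n\big)$, so the fixed point $\rho/\ol{\lambda}_0$ is $\tau$-independent; no tracking of $\psi_n(V^n)$ or of log-determinant sublevel sets is involved. Since $\ul{\lambda}_V$ feeds directly into the stepsize threshold and rate of Theorem \ref{thm:linear_conv} (via Proposition \ref{prop:local_smooth}), this omission leaves Item (1), and hence the downstream convergence analysis, unestablished in your write-up.

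To be fair, your non-commutativity instinct does touch the one delicate point of the paper's argument: the first inequality above amounts to $\lambda_{\min}\big(\tfrac12(SV+VS)\big)\ge\lambda_{\min}(S)\lambda_{\min}(V)$ for the positive semidefinite pair $S=I_k-\tau M^n_t$, $V=V^n_t$, and this is not a valid inequality for general non-commuting positive semidefinite matrices --- the symmetrised product of two positive definite matrices can even be indefinite --- whereas the analogous $\lambda_{\max}$ bound does hold by Cauchy--Schwarz. So the paper's one-step lower bound deserves more justification than the bare assertion it receives. But flagging a subtlety is not a proof: as written, your proposal neither executes the one-step recursion nor carries out the announced barrier argument, so the assertion $\ul{\lambda}_V I_k\preceq V^n$ remains unproven.
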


 \begin{Remark}[\textbf{Implicit regularisation}]
 \label{rmk:implicit_regularisation}
The    uniform  bounds of $(K^n)_{n\in \sN}$
and  $(V^n)_{n\in \sN}$
are achieved by an \textit{implicit regularisation} feature of the geometry-aware gradient directions $\cD_K$
and $\cD^{\rm bw}_V$.
Here, ``implicit regularisation"   means that the iterates preserve certain constraints  without  an explicit projection step.
Note that     applying projection to enforce
a  pointwise  lower   bound  for  minimum eigenvalues 
of $(V^n)_{n\in \sN}$  
 is    computationally expensive. It     
 requires performing an eigenvalue decomposition of $V^n_t$ for every time $t\in [0,T]$ and   iteration $n\in \sN$.

A similar implicit regularisation property holds     if $(K^n)_{n\in\sN}$ is updated by a preconditioned natural gradient   descent:
for all $n\in \sN_0$, 
$$
K^{n+1}_t=K^n_t-\tau H^n_t \cD_K(\theta^n)_t, 
\quad 
\textnormal{with $
\tfrac{1}{L} I_k \preceq
H^n 
\preceq LI_k$ for some $ {L}>0$ independent of $n$}.
$$
This includes 
%
the Gauss-Newton   method 
with $H^n=\left(D^\top P^n D+R+\rho\bar{V}^{{-1}}\right)^{-1}$ 
as a special case
(see \cite{fazel2018global,
gravell2020learning}).
However, due to the  noncoercivity of $\cC$,
 it is unclear 
 whether  an implicit regularisation holds
for an arbitrary descent direction of $\cC$ in $K$
(e.g., the   
vanilla gradient direction $\nabla_K \cC(\theta)=\cD_K(\theta)\Sigma^{\theta}$),
in contrast with   PG methods for discrete-time problems \cite{fazel2018global, hambly2021policy};
 see the discussion 
above Proposition \ref{prop:non_coercive}. 

{It is noteworthy that an implicit regularisation feature of natural policy gradient algorithms was observed in   \cite{zhang2021policy}. In their study, 
an agent optimises over   stationary linear policies  to stablise  a linear system with additive noise over an infinite horizon while adhering to robustness constraints on the sup-norm of the input-output transfer matrix. They show that a natural policy gradient algorithm naturally
preserves    the transfer matrix's sup-norm throughout the iterations, eliminating the need for explicit projection.

The challenges faced in the current setting differ from those in   \cite{zhang2021policy}.
Firstly, 
as Proposition \ref{prop:non_coercive} shows, the cost of a finite-horizon continuous-time LQC problem is already noncoercive without any robustness constraints. This is primarily because 
a policy can  have an infinite number of changes in values, occurring at arbitrary time points.
Such a feature is not present in the scenarios studied in \cite{zhang2021policy}, where stationary policies are considered.
Secondly, instead of optimising a deterministic policy, we optimise both the mean and covariance of a Gaussian policy, for which 
we  derive natural gradient updates with respect to different geometries. 
Our result implies    that Wasserstein gradient descent of negative entropy preserves a-priori bounds on the variance of Gaussian measures, which is novel and  of  independent interest.
Lastly, the possible degeneracy of the system noise and the cost coefficients (Remark \ref{rmk:regularity_solv}) necessitates a more precise quantification of the desired implicit regularisation within appropriate function spaces.
 
 }

\end{Remark}

 Proposition \ref{prop:uniform_bound}
     implies  that  the   functional 
$\cC$ 
satisfies   uniform {\L}ojasiewicz and smoothness conditions 
\textit{along the   iterates $(\theta^n)_{n\in \sN}$}.
Based on  this local regularity,  
 the following theorem establishes 
the global linear convergence of 
  \eqref{eq:NPG} for all sufficiently small   stepsizes $\tau$.
 The proof is  given in Section \ref{sec:proof_linear_conv}.

%

 \begin{Theorem}
 \label{thm:linear_conv}
 Suppose (H.\ref{assum:coefficident}) 
   and (H.\ref{assum:nondegeneracy}) hold,
 and $\sE[\xi_0\xi_0^\top]\succ 0$.
Let
 $\theta^0 \in \Theta$,
 and 
for  each 
$\tau>0$, 
let $(\theta^n)_{n\in \sN}\subset  \cB(0,T; \sR^{k\t d}\t \sS^k) $ be defined in \eqref{eq:NPG}.
Then
there exists $ \tau_0,  {C}_1,C_2>0$
such that 
for all $\tau \in (0, \tau_0]$  and $n\in \sN_{  0}$, 
\begin{enumerate}[(1)]
\item 
\label{item:value_conv}
$\cC(\theta^{n+1})\le \cC(\theta^{n})$
and 
$\cC(\theta^{n+1})-\cC(\theta^\star)\le (1-\tau  {C}_1)(\cC(\theta^{n})-\cC(\theta^\star))$,
with $\theta^\star$   defined in \eqref{eq:KV_star},

\item
$\|K^n-K^\star\|^2_{L^2}+\|V^n-V^\star\|^2_{L^2}\le  {C}_2(1-\tau  {C}_1)^n$.
\label{item:control_conv}
\end{enumerate}

 \end{Theorem}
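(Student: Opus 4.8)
The plan is to upgrade the \emph{local} regularity estimates of Section \ref{sec:landscape} into a global linear rate by running them \emph{along the iterates}, where Proposition \ref{prop:uniform_bound} turns every $\theta$-dependent constant into a uniform one. Throughout write $M^n_t\coloneqq D^\top_t P^{\theta^n}_t D_t+R_t+\rho\bar{V}^{-1}_t$. By Proposition \ref{prop:uniform_bound}(2) together with (H.\ref{assum:nondegeneracy}) we have $\widetilde{\delta} I_k\preceq M^n\preceq\ol{\lambda}_0 I_k$; by Proposition \ref{prop:uniform_bound}(1),(3), $\ul{\lambda}_V I_k\preceq V^n\preceq\ol{\lambda}_V I_k$ and $\ul{\lambda}_X I_d\preceq\Sigma^{\theta^n}\preceq\ol{\lambda}_X I_d$ uniformly in $n$; and, since $\theta^\star\in\Theta$, also $\Sigma^{\theta^\star}\preceq\ol{\lambda}_X I_d$ after enlarging $\ol{\lambda}_X$. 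These bounds are what make a single stepsize $\tau$ work despite noncoercivity.

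First I would produce a one-step descent from Proposition \ref{prop:Lipshcitz_smooth} applied with $(\theta,\theta')=(\theta^n,\theta^{n+1})$, substituting $K^{n+1}-K^n=-\tau\cD_K(\theta^n)$ and $V^{n+1}-V^n=-\tau\cD^{\rm bw}_V(\theta^n)$. For the $K$-part, bounding $\langle\cD_K,M^n\cD_K\Sigma^{\theta^{n+1}}\rangle\le\ol{\lambda}_0\langle\cD_K,\cD_K\Sigma^{\theta^{n+1}}\rangle$ via $M^n\preceq\ol{\lambda}_0 I_k$ and $\Sigma^{\theta^{n+1}}\succeq0$ yields the coefficient $-\tau+\tfrac{\tau^2}{2}\ol{\lambda}_0\le-\tfrac{\tau}{2}$ for $\tau\le1/\ol{\lambda}_0$, so the $K$-terms are $\le-\tfrac{\tau}{2}\int_0^T\langle\cD_K(\theta^n)_t,\cD_K(\theta^n)_t\Sigma^{\theta^{n+1}}_t\rangle\,\d t$. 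For the $V$-part I would use $\langle\cD_V,\cD_V V^n+V^n\cD_V\rangle=2\tr(\cD_V^2V^n)\ge2\ul{\lambda}_V|\cD_V|^2$, together with $|V^{n+1}-V^n|\le2\ol{\lambda}_V|\cD_V|$ and $\min(\lambda^2_{\min}(V^n),\lambda^2_{\min}(V^{n+1}))\ge\ul{\lambda}_V^2$; the two $V$-contributions combine into coefficient $-2\tau\ul{\lambda}_V+\tau^2\rho\ol{\lambda}_V^2/\ul{\lambda}_V^2$, which is $\le-\tau\ul{\lambda}_V$ once $\tau\le\ul{\lambda}_V^3/(\rho\ol{\lambda}_V^2)$. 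Thus $\cC(\theta^{n+1})-\cC(\theta^n)\le-\tfrac{\tau}{2}\int_0^T\langle\cD_K(\theta^n)_t,\cD_K(\theta^n)_t\Sigma^{\theta^{n+1}}_t\rangle\,\d t-\tau\ul{\lambda}_V\int_0^T|\cD_V(\theta^n)_t|^2\,\d t$.

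Next I would match this decrement to the right-hand side of \eqref{eq:gd_KV_statement}. Since $\Sigma^{\theta^{n+1}}\succeq\ul{\lambda}_X I_d\succeq(\ul{\lambda}_X/\ol{\lambda}_X)\Sigma^{\theta^\star}$ and $(M^n)^{-1}\preceq\widetilde{\delta}^{-1}I_k$, the $K$-decrement dominates a fixed multiple of the first integrand of \eqref{eq:gd_KV_statement}; and since $\max(\|V^\star_t\|_2^2,\|V^n_t\|_2^2)\le\max(\|V^\star\|_{L^\infty}^2,\ol{\lambda}_V^2)$, the $V$-decrement dominates a fixed multiple of the second. Hence there is $C_1>0$ with $\cC(\theta^{n+1})-\cC(\theta^n)\le-\tau C_1(\cC(\theta^n)-\cC(\theta^\star))$ by Proposition \ref{prop:Lojaiewicz}, which gives both the monotonicity $\cC(\theta^{n+1})\le\cC(\theta^n)$ and, by induction, item (1) for all $\tau\le\tau_0$, where $\tau_0$ is the minimum of the smallness thresholds above and $1/C_1$.

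Finally, for item (2) I would establish a complementary quadratic-growth lower bound along the iterates. Completing the square against the Riccati solution $P^\star$ (computing $\tfrac{\d}{\d t}\sE[(X^\theta_t)^\top P^\star_t X^\theta_t]$ and using \eqref{eq:riccati_star_ent} and \eqref{eq:KV_star}) gives the exact identity $\cC(\theta)-\cC(\theta^\star)=\tfrac12\int_0^T\tr\big((K_t-K^\star_t)^\top M^\star_t(K_t-K^\star_t)\Sigma^\theta_t\big)\,\d t+\int_0^T\big(g_t(V_t)-g_t(V^\star_t)\big)\,\d t$, where $M^\star_t=D^\top_t P^\star_t D_t+R_t+\rho\bar{V}^{-1}_t$ and $g_t(V)=\tfrac12\tr(M^\star_t V)-\tfrac{\rho}{2}\ln\det V$ is convex with minimiser $V^\star_t=\rho (M^\star_t)^{-1}$. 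Along the iterates, $M^\star\succeq\widetilde{\delta}I_k$ and $\Sigma^{\theta^n}\succeq\ul{\lambda}_X I_d$ bound the $K$-term below by $\tfrac{\widetilde{\delta}\ul{\lambda}_X}{2}\|K^n-K^\star\|^2_{L^2}$, while strong convexity of $g_t$ on $\{\ul{\lambda}_V I_k\preceq V\preceq\ol{\lambda}_V I_k\}$ bounds the $V$-term below by a multiple of $\|V^n-V^\star\|^2_{L^2}$; combined with item (1) this yields item (2). The main obstacle, intrinsic to the noncoercive continuous-time setting, is the geometric mismatch between the descent step, which produces the current-iterate metric $M^n$ and covariance $\Sigma^{\theta^{n+1}}$, and Proposition \ref{prop:Lojaiewicz}, which is stated with $(M^n)^{-1}$ and the optimal covariance $\Sigma^{\theta^\star}$; reconciling them is exactly what the two-sided, $n$-uniform bounds of Proposition \ref{prop:uniform_bound} (the implicit regularisation of the geometry-aware directions) buy us, since without them the per-step constants degenerate and no fixed $\tau$ would suffice.
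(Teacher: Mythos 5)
Your proposal is correct and follows essentially the same route as the paper: the one-step decrease is the paper's Proposition \ref{prop:local_smooth} (almost-smoothness of Proposition \ref{prop:Lipshcitz_smooth} applied to the updates \eqref{eq:NPG}, with the uniform bounds of Proposition \ref{prop:uniform_bound} making the coefficients $n$-independent), the matching against \eqref{eq:gd_KV_statement} is the paper's Proposition \ref{prop:local_Lojasiewicz}, and your quadratic-growth identity for item (\ref{item:control_conv}) is exactly Lemma \ref{lemma:performance_gap} specialised at $\theta^\star$ (where $\cD_K(\theta^\star)=\cD_V(\theta^\star)=0$), combined with the strong-convexity bound \eqref{eq:PL_V}. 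The only cosmetic differences are that you keep the $K$-decrement in the form $\la \cD_K,\cD_K\Sigma^{\theta^{n+1}}\ra$ and compare $\Sigma^{\theta^{n+1}}$ to $\Sigma^{\theta^\star}$ via $\ul{\lambda}_X/\ol{\lambda}_X$, where the paper converts both sides to $|\cD_K|^2$, and that you rederive the performance-gap identity by an It\^{o}/completion-of-squares argument with $P^\star$ rather than citing the lemma.
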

 {The precise expressions of  the constants $\tau_0$, $C_1$ and $C_2$ can be found    in the proof of the statement. 
  These constants  depend    on the regularisation weight $\rho$ 
  in    \eqref{eq:reg_cost_open_loop},
  the constant $\widetilde{\delta}$ in  (H.\ref{assum:nondegeneracy}),
   the     initial guess  $\theta^0$,
   and the a-priori bounds
   $\ul{\lambda}_X, \ol{\lambda}_X, \ul{\lambda}_V, \ol{\lambda}_V$
     in Proposition \ref{prop:uniform_bound}. 
     Achieving more precise dependencies in terms of model parameters is challenging. 
     It would entail deriving
     precise a-priori bounds of solutions to \eqref{eq:riccati_star_ent}
and \eqref{eq:lyapunov_reg} in terms of the coefficients given in 
(H.\ref{assum:coefficident}\ref{assum:integrability}).
This remains an open problem, particularly when the diffusion coefficient is controlled ($D\neq0$) and when cost coefficients $Q$, $R$, and $G$ are not positive definite.

 } 
\subsection{Mesh-independent linear convergence with discrete-time policies}
\label{sec:mesh-independence}

 By leveraging Theorem \ref{thm:linear_conv},
 this section proposes PG methods that take actions at discrete time points and achieve  a robust convergence
 behaviour across different mesh sizes. 
%
Our analysis shows that   a proper scaling of the discrete-time gradients   in terms of mesh size is critical for a robust performance of the algorithm. 
 
 More precisely, 
let $ \mathscr{P}_{[0,T]}$ be 
the collection of all partitions of $[0,T]$.
For each $\pi=\{0=t_0<\cdots<t_N=T\} \in  \mathscr{P}_{[0,T]}$,
let  $|\pi|=\max_{i=0,\ldots, N-1}(t_{i+1}-t_i)$
be the   mesh size of $\pi$, and
let $\Theta^\pi\subset \Theta$   be the set of 
piecewise constant policies 
on $\pi$:
\begin{align}\label{eq:Theta_pi}
\begin{split}
\Theta^\pi
&=\left\{\theta \in \Theta 
\mid 
\theta_t= \theta_{t_i},
\;
\textnormal{a.e.~$t\in [t_i,t_{i+1})$ and all $i\in \{0,\ldots, N-1\}$}\right\}.
\end{split}
\end{align}
Then
define 
the minimum cost  $\cC $ over $\Theta^\pi$:
\bb\l{eq:reg_cost_discrete}
\cC_{\pi}^\star=\inf_{\theta\in \Theta^{\pi}} \cC(\theta).
\ee
Note  that
by $\Theta^\pi\subset \Theta$,
 $\cC_{\pi}^\star\ge \inf_{\theta\in \Theta}  \cC(\theta)
=\cC(\theta^\star)>-\infty$.

We now introduce 
a family of gradient descent schemes 
 for \eqref{eq:reg_cost_discrete}. 
 Let 
 $\theta^{\pi,0}\in \Theta^\pi$
be an initial guess
and 
 $\tau>0$ 
be a stepsize.
Consider the following sequence $(\theta^{\pi,n})_{n\in \sN_{ 0}}\subset \Theta^\pi$  (cf.~\eqref{eq:NPG}) such that 
for all $n\in \sN_0$, 
\bb\label{eq:NPG_discrete}
K^{\pi,n+1}_t=K^{\pi,n}_t-\tau \cD^\pi_K(\theta^{\pi,n})_t, 
\quad 
V^{\pi,n+1}_t= V^{\pi,n}_t-\tau \cD^\pi_V(\theta^{\pi,n})_t, 
\q \textnormal{a.e.~$t\in [0,T]$},
\ee
where 
 $(\cD^\pi_K,\cD^\pi_V): \Theta^\pi\to \Theta^\pi$
 approximates
the gradient operators $(\cD_K,\cD^{\rm bw}_V)$  
in \eqref{eq:NPG} as $|\pi|\to 0$;
 see (H.\ref{assum:D_pi}) for 
the precise     condition.

The convergence  behaviour    of \eqref{eq:NPG_discrete} is measured by 
the   number of required iterations 
 $N^\pi(\eps)$
to achieve a certain accuracy $\eps>0$: 
let  $(\theta^{\pi,n})_{n\in\sN_{ 0}}$ be generated by \eqref{eq:NPG_discrete}
(with some $\theta^{\pi,0}\in \Theta^\pi$ and
$\tau>0$),
and for each 
  $\eps>0$,
define
\bb\label{eq:N_pi_eps}
N^{\pi}(\eps)\coloneqq \min
\left\{n\in \sN_{  0} 
\,\Big\vert\,
 \cC(\theta^{\pi,n})-\inf_{\theta\in \Theta^{\pi}}\cC(\theta)< \eps
\right\}
\in \sN_0\cup\{\infty\}.
\ee 
Note that $N^{\pi}$ is defined for  a fixed mesh $\pi$, 
and hence the residual is defined using the minimum cost $\cC_{\pi}^\star$ over piecewise constant policies $\Theta^\pi$. 
Similarly, let  $(\theta^{n})_{n\in\sN_{ 0}}$ be a sequence   generated by \eqref{eq:NPG}
(with some $\theta^{0}\in \Theta$ and
$\tau>0$), and for each $\eps>0$, define  
\bb\label{eq:N_eps}
N(\eps)\coloneqq \min
\left\{n\in \sN_{  0} 
\,\Big\vert\, 
\cC(\theta^{n})-\inf_{\theta\in \Theta}\cC(\theta)< \eps
\right\}\in \sN_0\cup\{\infty\}.
\ee

The main result of this section shows that 
if the   gradient operators
$(\cD^\pi_K,\cD^\pi_V)_{\pi}$
in \eqref{eq:NPG_discrete} satisfy
the   consistency condition (H.\ref{assum:D_pi}), then 
for all sufficiently fine grids $\pi$,
$N^{\pi}(\eps)$ is essentially equal to $N(\eps)$.

%
%
%

 \begin{Assumption}
 \label{assum:D_pi}
For every $\theta \in L^2(0,T; \sR^{k\t d})\t C([0,T]; \sS^k_+) $, 
  every   
sequence $(\pi_m)_{m\in \sN}\subset \mathscr{P}_{[0,T]}$
such that $\lim_{m\to \infty} |\pi_m|=0$,
and   every $(\theta^{m})_{m\in \sN}\subset \Theta$  
such that 
$\theta^{m} \in \Theta^{\pi_m}$ for all $m\in \sN$
and 
  $\lim_{m\to \infty}\|\theta^{m}-\theta\|_{L^2\t L^\infty}=0$, 
 we have
$$
\lim_{m\to \infty}\|\cD^{\pi_m}_K(\theta^{m})-\cD_K(\theta)\|_{L^2}=0,
\quad 
\textnormal{and}\q
\lim_{m\to \infty}\|\cD^{\pi_m}_V(\theta^{m})-\cD^{\rm bw}_V(\theta)\|_{L^\infty}=0.
$$

  \end{Assumption}

 \begin{Theorem}
\label{thm:mesh_independent_general}
Suppose (H.\ref{assum:coefficident}),
  (H.\ref{assum:nondegeneracy})
and (H.\ref{assum:D_pi})
 hold,
   $\sE[\xi_0\xi_0^\top]\succ 0$,
 $D\in C([0,T];\sR^{d\t k})$,
 $R\in C([0,T]; \sS^k)$ 
 and 
  $\bar{V} \in C([0,T]; \sS^k_+)$.
 Let  
 $\theta^0   \in 
 L^2(0,T; \sR^{k\t d})\t C([0,T]; \sS^k_+)
 $,
  let $(\pi_m)_{m\in \sN}\subset \mathscr{P}_{[0,T]}$    be such that $\lim_{m\to \infty}|\pi_m|=0$  
  and 
let
$(\theta^{\pi_m,0})_{m\in \sN}\subset \Theta$
be such that 
$\theta^{\pi_m,0}\in \Theta^{\pi_m}$ for all $m\in \sN$
and 
  $\lim_{m\to \infty}\|\theta^{\pi_m,0}-\theta^0\|_{L^2\t L^\infty}=0$. 
Then
there exists $\tau_0>0$
 such that for all $\tau\in (0,\tau_0)$ and 
 $\eps>0$, there exists $\ol{m}\in \sN$ such that 
 \bb\label{eq:N_pi_bound_1}
N(\eps)-1 \le N^{\pi_m}(\eps)\le N(\eps),
\quad \fa m\in \sN\cap[ \ol{m},\infty).
\ee
\end{Theorem}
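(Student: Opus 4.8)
I take $\tau_0$ to be the constant furnished by Theorem \ref{thm:linear_conv} for the continuous-time scheme \eqref{eq:NPG} (shrinking it later if needed) and fix $\tau\in(0,\tau_0)$, so that the continuous iterates $(\theta^n)_{n\in\sN_0}$ obey the a-priori bounds of Proposition \ref{prop:uniform_bound} and converge linearly. For $n\in\sN_0$ put $r_n\coloneqq\cC(\theta^n)-\cC(\theta^\star)$ and $r^m_n\coloneqq\cC(\theta^{\pi_m,n})-\cC_{\pi_m}^\star$, so that $N(\eps)=\min\{n:r_n<\eps\}$ and $N^{\pi_m}(\eps)=\min\{n:r^m_n<\eps\}$; by Theorem \ref{thm:linear_conv}, $N(\eps)<\infty$. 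The whole argument reduces to the single claim that $r^m_n\to r_n$ as $m\to\infty$ for each \emph{fixed} $n$. Granting this, \eqref{eq:N_pi_bound_1} follows by elementary index counting using the strict decay of $(r_n)_n$. To prove the claim I establish two convergences: (i) $\cC(\theta^{\pi_m,n})\to\cC(\theta^n)$, and (ii) $\cC_{\pi_m}^\star\to\cC(\theta^\star)$, both as $m\to\infty$.

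\textbf{Convergence of the iterates, proof of (i).} The extra continuity assumptions on $D,R,\bar V$, together with the updates \eqref{eq:NPG} and Proposition \ref{prop:uniform_bound}, ensure $\theta^n=(K^n,V^n)\in L^2(0,T;\sR^{k\t d})\t C([0,T];\sS^k_+)$ with $V^n\succeq\ul{\lambda}_V I_k$ for all $n$. I then show by induction on $n$ that $\|\theta^{\pi_m,n}-\theta^n\|_{L^2\t L^\infty}\to0$ as $m\to\infty$. The base case is the hypothesis on the initial guesses. For the inductive step, subtracting \eqref{eq:NPG} from \eqref{eq:NPG_discrete} gives
\begin{equation*}
\theta^{\pi_m,n+1}-\theta^{n+1}=(\theta^{\pi_m,n}-\theta^n)-\tau\big[(\cD^{\pi_m}_K,\cD^{\pi_m}_V)(\theta^{\pi_m,n})-(\cD_K,\cD^{\rm bw}_V)(\theta^n)\big].
\end{equation*}
Since $\theta^{\pi_m,n}\in\Theta^{\pi_m}$ and $\theta^{\pi_m,n}\to\theta^n$ by the induction hypothesis, the consistency condition (H.\ref{assum:D_pi}) forces the bracketed term to vanish in $L^2\t L^\infty$, while the first term vanishes by hypothesis; hence $\theta^{\pi_m,n+1}\to\theta^{n+1}$. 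Because $V^n\succeq\ul{\lambda}_V I_k$ and $V^{\pi_m,n}\to V^n$ uniformly, the discrete covariance iterates are positive definite for all large $m$, so the iterates lie in $\Theta^{\pi_m}$ and the scheme is well posed along the finitely many indices $n\le N(\eps)$ that we actually use. Finally, continuity of $\theta\mapsto\cC(\theta)$ on bounded subsets of $\Theta$ (a consequence of the continuous dependence of $P^\theta,\Sigma^\theta$ on $\theta$ and of Proposition \ref{prop:Lipshcitz_smooth}) yields $\cC(\theta^{\pi_m,n})\to\cC(\theta^n)$.

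\textbf{Convergence of the discrete minima and index counting.} As $\Theta^{\pi_m}\subset\Theta$ we always have $\cC_{\pi_m}^\star\ge\cC(\theta^\star)$. For the reverse bound, under the continuity assumptions $\theta^\star=(K^\star,V^\star)$ of \eqref{eq:KV_star} is continuous with $V^\star\succeq c\,I_k$ for some $c>0$, so its piecewise-constant interpolation $\tilde\theta^m$ on $\pi_m$ lies in $\Theta^{\pi_m}$ and satisfies $\|\tilde\theta^m-\theta^\star\|_{L^2\t L^\infty}\to0$; continuity of $\cC$ then gives $\cC_{\pi_m}^\star\le\cC(\tilde\theta^m)\to\cC(\theta^\star)$, so $\cC_{\pi_m}^\star\to\cC(\theta^\star)$. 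Combining (i) and (ii), $r^m_n\to r_n$ for each fixed $n$. Now write $N=N(\eps)$. By Theorem \ref{thm:linear_conv}, $r_{n+1}\le(1-\tau C_1)r_n$ with $1-\tau C_1\in(0,1)$, so $(r_n)_n$ strictly decreases while positive; since $r_n\ge\eps>0$ for $n\le N-1$ we obtain $r_0>\cdots>r_{N-1}\ge\eps$, hence $r_n>\eps$ for every $n\le N-2$, whereas $r_N<\eps$. Using $r^m_n\to r_n$ at the finitely many indices $n\in\{0,\ldots,N-2\}\cup\{N\}$, pick $\ol m$ so large that $r^m_n>\eps$ for all $n\le N-2$ and $r^m_N<\eps$ whenever $m\ge\ol m$; the former gives $N^{\pi_m}(\eps)\ge N-1$ and the latter $N^{\pi_m}(\eps)\le N$, which is \eqref{eq:N_pi_bound_1}. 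The degenerate cases $N\in\{0,1\}$ impose no lower-bound conditions and are immediate.

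\textbf{Main obstacle.} The delicate point is the interplay in the second paragraph between proving $L^2\t L^\infty$ convergence of the discrete iterates and simultaneously keeping them admissible: the consistency hypothesis (H.\ref{assum:D_pi}) only supplies gradient convergence \emph{along} sequences that already converge and remain in $\Theta^{\pi_m}$, so one must check inductively that the covariance iterates stay positive definite for large $m$ -- which is guaranteed here by their uniform closeness to the continuous iterates, whose covariances are bounded below by $\ul{\lambda}_V$ via Proposition \ref{prop:uniform_bound}. Everything else is a finite induction combined with the elementary index counting made possible by the strict geometric decay of $(r_n)_n$.
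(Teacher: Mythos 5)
Your proposal follows essentially the same route as the paper's proof: you decompose the problem into (i) convergence of the discrete iterates' costs at each fixed $n$, which is the paper's Proposition \ref{prop:discrete_convergence}, proved by the same induction on $n$ using (H.\ref{assum:D_pi}) together with the observation that $V^{\pi_m,n}$ stays uniformly positive definite for large $m$ by closeness to $V^n\succeq \ul{\lambda}_V I_k$, and (ii) convergence of the discrete optimal values $\cC^\star_{\pi_m}\to \cC(\theta^\star)$, which is the paper's Lemma \ref{lemma:discrete_value_conv}. Your index counting is a streamlined version of the paper's: where the paper introduces the two-sided bound $N(\eps+\gamma)\le N^{\pi_m}(\eps)\le N(\eps)$ and the margin $\gamma_\eps=\min\{\cC(\theta^n)-\cC^\star-\eps \mid n<N(\eps)-1\}$, you exploit the strict geometric decay directly to get $r_n>\eps$ strictly for all $n\le N(\eps)-2$ and $r_{N(\eps)}<\eps$, then transfer these strict inequalities to the discrete residuals at finitely many indices; this is the same mechanism, correctly executed, including the degenerate cases.

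One step is misstated, though easily repaired: you assert that $\theta^\star$ is continuous and use its nodal piecewise-constant interpolation to bound $\cC^\star_{\pi_m}$ from above. Under (H.\ref{assum:coefficident}) the coefficients $B$ and $S$ are merely $L^2$ (and $C\in L^2$), so $K^\star$ in \eqref{eq:KV_star} is in general only an $L^2$ function even with the theorem's extra continuity hypotheses on $D$, $R$, $\bar{V}$ (these make $V^\star$ continuous, but not $K^\star$); nodal interpolation of $K^\star$ is then not well defined, let alone $L^2$-convergent. The paper's Lemma \ref{lemma:discrete_value_conv} avoids this by projecting $\theta^\star$ onto $\Theta^{\pi_m}$ via local averages over each grid interval, which converge to $\theta^\star$ in $L^2$ by a density/mollification argument while preserving the uniform eigenvalue bounds $\eps I_k\preceq V^{m,\star}\preceq \tfrac{1}{\eps}I_k$. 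Since your subsequent application of Proposition \ref{prop:Lipshcitz_smooth} only needs $L^2$ convergence together with these eigenvalue bounds and the uniform bound on $\Sigma^{\theta^{m,\star}}$ from Lemma \ref{lemma:Sigma_bdd}, substituting the averaged projection for the nodal interpolant makes the rest of your argument go through verbatim.
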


The proof of Theorem \ref{thm:mesh_independent_general} is given in 
Section \ref{sec:proof_mesh_independence}.

{Theorem \ref{thm:mesh_independent_general} indicates that
\eqref{eq:NPG_discrete} achieves  linear convergence  uniformly across different timescales.
Indeed,   by Theorem    \ref{thm:linear_conv},
there exists $ \tau_0,  {C}_1 >0$
such that 
for all $\tau \in (0, \tau_0]$  and $n\in \sN_{  0}$, 
$\cC(\theta^{n+1})-\cC(\theta^\star)\le (1-\tau  {C}_1)^n(\cC(\theta^{0})-\cC(\theta^\star))$.
This  implies that 
$N(\eps)\le \frac{\ln\left(\frac{\eps}{\cC(\theta^{0})-\cC(\theta^\star)}\right)}{\ln(1-\tau C_1)}$
for all $\eps>0$.
By the identity that $\lim_{x\to 0}\frac{\ln(1+x)}{x}=1$,  
$N^{\pi_m}(\eps)\approx \frac{1}{C_1\tau} \ln\left(\frac{\cC(\theta^{0})-\cC(\theta^\star)}{\eps}\right)  $
for all $m\ge \ol{m}$ and   sufficiently small $\tau$ and $\eps$.

}

To design a concrete  gradient methods satisfying  (H.\ref{assum:D_pi}),
     for each $\pi=\{0=t_0<\cdots<t_N=T\} \in \mathscr{P}_{[0,T]}$,
we  identify
$ \Theta^\pi$ with $(\sR^{k\t d} \t \sS_+^k)^N$
by the natural parameterisation:
\bb\label{eq:finite_parameterisation}
(\sR^{k\t d} \t \sS_+^k)^N
\ni (K_i,V_i)_{i=1}^{N-1}
\mapsto   
\left(
 \sum^{N-1}_{i=0}  {K}_i \mathds{1}_{[t_i,t_{i+1})}(t),
 \,
  \sum^{N-1}_{i=0}  {V}_i \mathds{1}_{[t_i,t_{i+1})}(t)
 \right)_{t\in [0,T]}\in \Theta^\pi,
\ee
and 
by  abuse of notation, 
write $\cC:(\sR^{k\t d} \t \sS_+^k)^N\to \sR$
as  the cost 
of a Gaussian policy induced by the parameterisation
\eqref{eq:Gaussian_V} and 
\eqref{eq:finite_parameterisation}.
Then for each  
 $\theta^{\pi,0}\in \Theta^\pi$ and
 $\tau>0$,
  consider the following sequence 
  $(\theta^{\pi,n})_{n\in \sN_{ 0}}\subset  \Theta^\pi$ such that 
for all $n\in \sN_0$
and $i\in \{0,\ldots, N-1\}$,
$\theta^{\pi,n+1}_t= (K^{\pi,n+1}_{i},V^{\pi,n+1}_{i})$
for all $t\in [t_i,t_{i+1})$, with
\begin{align}\label{eq:NPG_discrete_finite}
\begin{split}
K^{\pi,n+1}_i &=K^{\pi,n}_i-
\frac{\tau}{\Delta_i}
{\nabla_{K_i}\cC}(\theta^{\pi,n})
\left(\Sigma_{t_i}^{\theta^{\pi,n}}
\right)^{-1}, 
\\ 
V^{\pi,n+1}_i  &= V^{\pi,n}_i
-\frac{\tau}{\Delta_i}
\left(
  V^{\pi,n}_i{\nabla_{V_i}\cC}(\theta^{\pi,n})
  +{\nabla_{V_i}\cC}(\theta^{\pi,n})  V^{\pi,n}_i
\right), 
\end{split}
\end{align}
where
$\Delta_i= t_{i+1}-t_i$, 
 $\Sigma ^{\theta^{\pi,n}}_{t_i}=\sE[X^{\theta^{\pi,n}}_{t_i}(X^{\theta^{\pi,n}}_{t_i})^\top]$,
and ${\nabla_{K_i}\cC} $ (resp.~${\nabla_{V_i}\cC}$) is the partial derivative of   $\cC$ with respect to the matrix $K_i$
(resp.~${V_i}$). 
The practical implementation of the algorithm is further discussed at the end of this section.

%
%

The following corollary shows that 
 \eqref{eq:NPG_discrete_finite}
satisfies    (H.\ref{assum:D_pi}),
whose  proof  is given in 
Section \ref{sec:proof_mesh_independence}.

\begin{Corollary}
\label{corollary:mesh_independent_piecewise}
Suppose (H.\ref{assum:coefficident})
and    (H.\ref{assum:nondegeneracy}) hold,
    $\sE[\xi_0\xi_0^\top]\succ 0$,
 $D\in C([0,T];\sR^{d\t k})$,
 $R\in C([0,T]; \sS^k)$ 
 and 
  $\bar{V} \in C([0,T]; \sS^k_+)$.
Then Theorem \ref{thm:mesh_independent_general}
holds for   \eqref{eq:NPG_discrete_finite}.
\end{Corollary}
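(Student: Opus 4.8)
The plan is to verify that the concrete scheme \eqref{eq:NPG_discrete_finite} satisfies the consistency condition (H.\ref{assum:D_pi}); Corollary \ref{corollary:mesh_independent_piecewise} then follows by a direct appeal to Theorem \ref{thm:mesh_independent_general}. The first step is to exhibit the discrete operators $\cD^\pi_K,\cD^\pi_V$ implicit in \eqref{eq:NPG_discrete_finite} as local averages of the continuous gradient integrands. Fix $\pi=\{0=t_0<\cdots<t_N=T\}$ and $\theta\in\Theta^\pi$; perturbing the single block $K_i$ under the parameterisation \eqref{eq:finite_parameterisation} is the Gateaux variation of Proposition \ref{prop:Gateaux} along $K'=\mathds{1}_{[t_i,t_{i+1})}E$, and likewise for $V_i$, so that
\begin{align}\label{eq:nabla_block}
\nabla_{K_i}\cC(\theta)=\int_{t_i}^{t_{i+1}}\cD_K(\theta)_s\Sigma^\theta_s\,\d s,
\qquad
\nabla_{V_i}\cC(\theta)=\int_{t_i}^{t_{i+1}}\cD_V(\theta)_s\,\d s.
\end{align}
Writing $\Psi_i=\tfrac{1}{\Delta_i}\int_{t_i}^{t_{i+1}}\cD_K(\theta)_s\Sigma^\theta_s\,\d s$ and $\Xi_i=\tfrac{1}{\Delta_i}\int_{t_i}^{t_{i+1}}\cD_V(\theta)_s\,\d s$ and substituting into \eqref{eq:NPG_discrete_finite} gives, for $t\in[t_i,t_{i+1})$,
\begin{align}\label{eq:disc_ops}
\cD^\pi_K(\theta)_t=\Psi_i\,(\Sigma^\theta_{t_i})^{-1},
\qquad
\cD^\pi_V(\theta)_t=V_{t_i}\Xi_i+\Xi_i V_{t_i},
\end{align}
formulas that make sense for any $\theta\in L^2(0,T;\sR^{k\t d})\t C([0,T];\sS^k_+)$ and thus may be compared with $\cD_K$ in \eqref{eq:D_K} and with $\cD^{\rm bw}_V$ in \eqref{eq:gradient_BW}. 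Formally letting $|\pi|\to0$, the averages tend to the pointwise integrands and the endpoint factors to $(\Sigma^\theta_t)^{-1}$ and $V_t$, recovering $\cD_K(\theta)_t\Sigma^\theta_t(\Sigma^\theta_t)^{-1}=\cD_K(\theta)_t$ and $V_t\cD_V(\theta)_t+\cD_V(\theta)_t V_t=\cD^{\rm bw}_V(\theta)_t$.

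Given a refining sequence $(\pi_m)$ with $|\pi_m|\to0$ and $\theta^m\in\Theta^{\pi_m}$ with $\theta^m\to\theta$ in $L^2\t L^\infty$, I would split, for the $K$-component,
\begin{align}\label{eq:split_D}
\|\cD^{\pi_m}_K(\theta^m)-\cD_K(\theta)\|_{L^2}
\le\|\cD^{\pi_m}_K(\theta^m)-\cD^{\pi_m}_K(\theta)\|_{L^2}
+\|\cD^{\pi_m}_K(\theta)-\cD_K(\theta)\|_{L^2},
\end{align}
and analogously in $L^\infty$ for the $V$-component. The second (consistency) term is the averaging error at the fixed limit $\theta$. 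Since the coefficients of the Lyapunov equations \eqref{eq:lyapunov_reg}, \eqref{eq:lq_sde_K_reg_cov} give $P^\theta,\Sigma^\theta\in C([0,T])$, the endpoint evaluations $\Sigma^\theta_{t_i},V_{t_i}$ converge uniformly, while the block averages converge to the identity in $L^2$ for every $L^2$ integrand (by density of $C([0,T])$ and the averaging maps being $L^2$-contractions); as $\cD_K(\theta)\Sigma^\theta\in L^2$, this yields $\cD^{\pi_m}_K(\theta)\to\cD_K(\theta)$ in $L^2$. For the $V$-component (H.\ref{assum:D_pi}) demands $L^\infty$ convergence, which is precisely why $D,R,\bar V\in C([0,T])$ are assumed: together with $V,P^\theta\in C$ this renders $\cD_V(\theta)$ in \eqref{eq:D_V}, and hence $\cD^{\rm bw}_V(\theta)$, uniformly continuous, so that $V_{t_i}$ and $\Xi_i$ converge uniformly and $\cD^{\pi_m}_V(\theta)\to\cD^{\rm bw}_V(\theta)$ in $L^\infty$.

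The first (stability) term in \eqref{eq:split_D} is controlled by continuous dependence of the Lyapunov solutions on the policy. I would show that $\theta^m\to\theta$ in $L^2\t L^\infty$ forces $P^{\theta^m}\to P^\theta$ and $\Sigma^{\theta^m}\to\Sigma^\theta$ in $C([0,T];\sS^d)$ via Gronwall applied to \eqref{eq:lyapunov_reg}, \eqref{eq:lq_sde_K_reg_cov}; the only delicate terms are quadratic in $K$, but with $M\in L^\infty$ one has $(K^m)^\top M K^m\to K^\top M K$ in $L^1$ whenever $K^m\to K$ in $L^2$, which is exactly the integrability the argument needs. Because $V\in C([0,T];\sS^k_+)$ is bounded below on $[0,T]$ and $V^m\to V$ in $L^\infty$, the $V^m$ are uniformly elliptic for large $m$, so $(V^m)^{-1}$ is bounded and converges in $L^\infty$; and $\sE[\xi_0\xi_0^\top]\succ0$ together with Lemma \ref{lemma:Sigma_bdd} supplies uniform two-sided bounds on $\Sigma^{\theta^m}$, hence on the endpoint factors $(\Sigma^{\theta^m}_{t_i})^{-1}$. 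Adding and subtracting inside \eqref{eq:disc_ops} and using that the block-averaging maps are $L^2$- and $L^\infty$-contractions \emph{uniformly in the mesh}, the stability term is bounded by $\|\cD_K(\theta^m)\Sigma^{\theta^m}-\cD_K(\theta)\Sigma^\theta\|_{L^2}$ plus $\sup_i|(\Sigma^{\theta^m}_{t_i})^{-1}-(\Sigma^\theta_{t_i})^{-1}|$ times bounded factors, both of which vanish; the analogous estimate holds for $V$ in $L^\infty$. Letting $m\to\infty$ in \eqref{eq:split_D} establishes (H.\ref{assum:D_pi}).

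I expect the main obstacle to be the $V$-component, where (H.\ref{assum:D_pi}) requires convergence in $L^\infty$ rather than $L^2$: uniform convergence of local averages is genuinely sensitive to regularity, and it is this requirement that forces the continuity hypotheses on $D,R,\bar V$ absent from (H.\ref{assum:coefficident}). The second, more structural, difficulty is to keep all estimates uniform along the refining family $(\pi_m)$ while $\theta^m$ varies in the coupled topology ($L^2$ in $K$, $L^\infty$ in $V$); the decisive point is that the block-averaging operators are simultaneous contractions on $L^2$ and $L^\infty$, so refinement never amplifies the policy perturbation, and everything reduces to the continuous dependence of $P^\theta,\Sigma^\theta$ on $\theta$ and to the uniform ellipticity of $\Sigma^{\theta^m}$ guaranteed by $\sE[\xi_0\xi_0^\top]\succ0$ and Lemma \ref{lemma:Sigma_bdd}.
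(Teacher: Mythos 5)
Your proposal is correct and follows essentially the same route as the paper's proof: both derive the block-gradient identities from Proposition \ref{prop:Gateaux}, recast \eqref{eq:NPG_discrete_finite} as local averaging combined with endpoint interpolation of the continuous gradients, and verify (H.\ref{assum:D_pi}) using the $L^2$/$L^\infty$ contractivity of the averaging maps, the stability of the Lyapunov solutions $P^\theta,\Sigma^\theta$ in $\theta$, the uniform two-sided bounds on $\Sigma^{\theta^m}$ from Lemma \ref{lemma:Sigma_bdd} together with $\sE[\xi_0\xi_0^\top]\succ 0$, and the continuity of $D,R,\bar{V},V$ for the $L^\infty$ convergence of the $V$-component. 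The only cosmetic difference is where you pivot the triangle inequality (at $\cD^{\pi_m}_K(\theta)$, the discrete operator evaluated at the limit policy, rather than at $\Pi^{\pi_m}(\cD_K(\theta))$ as in the paper), which changes nothing of substance.
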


\begin{Remark}
[\textbf{Scaling hyper-parameters with timescales}]
\label{rmk:hyperparameter_scaling}
It is critical to scale the stepsize $\tau$ in 
  \eqref{eq:NPG_discrete_finite}
  with respect to $\Delta_i$  for the  robustness of \eqref{eq:NPG_discrete_finite} for all small mesh sizes.
Indeed,   standard discrete-time natural PG methods
correspond to 
setting $\Delta_i=1$ in \eqref{eq:NPG_discrete_finite}
for all grids.
For   sufficiently fine grids,
this    is equivalent to adopting a vanishing stepsize 
$\tau \Delta_i$ in \eqref{eq:NPG},
as 
$ {\nabla_{K_i}\cC}(\theta)
\left(\Sigma_{t_i}^{\theta}
\right)^{-1} \approx \cD_K(\theta)_{t_i}\Delta_i $
and 
${\nabla_{V_i}\cC}(\theta^{\pi,n})\approx 
\cD_V(\theta)_{t_i}\Delta_i$
 (see  Proposition \ref{prop:Gateaux}).
This explains the   degraded performance
of conventional  discrete-time PG methods
 for small    mesh sizes. 
In contrast, by normalising the 
stepsize
with $\Delta_i$, 
 \eqref{eq:NPG_discrete_finite}
  admits a continuous-time limit \eqref{eq:NPG} as 
the time stepsize 
  $|\pi|\ $ vanishes., and achieves mesh-independent convergence;
  see Section \ref{sec:numerical} for more details.

 \end{Remark}
 
 {\begin{Remark}[\textbf{Extensions to discrete-time models}]
 Corollary \ref{corollary:mesh_independent_piecewise} can be extended to incorporate   time discretization of the underlying system. Here we provide a heuristic explanation of such an extension. Consider a sequence of time grids $(\pi_m)_{m\in \mathbb{N}} $ with $\lim_{m\to \infty} |\pi_m| = 0$. For each $m\in \mathbb{N}$, let $X^{m}$ be the discrete-time state dynamics resulting from the Euler--Maruyama discretization of \eqref{eq:reg_state_closed_loop} on the grid $\pi_m$, and let $\cC^{\pi_m}:\Theta^{\pi_m}\to \mathbb{R}$ be the associated cost functional  \eqref{eq:reg_cost_closed_loop}. 
 Introduce an analogue of \eqref{eq:NPG_discrete_finite}, where $ \nabla_{K_i}\cC(\theta^{\pi,n})$ and $ \nabla_{V_i}\cC (\theta^{\pi,n})$ are replaced by $ \nabla_{K_i}\cC^{\pi_m}(\theta^{\pi,n})$ and $ \nabla_{V_i}\cC^{\pi_m} (\theta^{\pi,n})$, respectively, and $\Sigma_{t_i}^{\theta^{\pi,n}}$ is replaced by the covariance of the discrete-time state $X^{m}$ controlled by $\theta^{\pi_m,n}$.
If   the coefficients in    (H.\ref{assum:coefficident}\ref{assum:integrability})
  are  sufficiently regular in time, 
 one can show that these discrete-time gradients   converge   to the continuous-time gradients in \eqref{eq:NPG} as $m\to \infty$,
 due to   the weak convergence of the Euler--Maruyama scheme.
 This would verify   Condition (H.\ref{assum:D_pi}), which along with   Theorem \ref{thm:mesh_independent_general} implies that   these discrete-time algorithms achieve mesh-independent linear convergence uniformly in $m$.

Similar analyses  can be carried out for   various time discretizations of the state system.  
Making these arguments precise for general time discretizations would require accurately quantifying the regularity conditions of the coefficients for the weak convergence of the discretization, and is left   for future work.
 
  \end{Remark}
}

{We end this section by describing a possible practical implementation of the algorithm \eqref{eq:NPG_discrete_finite}
   which allows for unknown coefficients in \eqref{eq:reg_state_closed_loop} and   \eqref{eq:reg_cost_closed_loop}.
Recall that, as shown in \cite{szpruch2022optimal}, for a given Gaussian policy $\nu^\theta$, the aggregated dynamics \eqref{eq:reg_state_closed_loop} and the associated cost \eqref{eq:reg_cost_closed_loop} can be approximated by interacting with the linear dynamics \eqref{eq:state_strict} with random actions.
More precisely,
  let $\tilde{\pi}=\{0=\tilde{t}_0<\cdots<\tilde{t}_M=T\} $ 
be a time mesh at which random actions are sampled. 
Consider $X^{\theta,\zeta}$ governed by  the following   dynamics:
\bb
\label{eq:state_randomise}
\d X_t =(A_tX_t+B_t  \phi^{\theta}_t(X_t) ) \, \d t+(C_t X_t +D_t\phi^{\theta}_t(X_t) ) \, \d W_{t}, \q t\in [0,T];
\q X_0=\xi_0,
\ee
where 
$$\phi^{\theta}_t(x)= K_t x+ V^{1/2}_t\vartheta_t, 
\quad \textnormal{with $\vartheta_t\coloneqq \sum_{i=0}^{M-1} \zeta_i \mathds{1}_{[\tilde{t}_i,\tilde{t}_{i+1})}(t)$,}
$$
and $(\zeta_i)_{i=0}^{M-1}$ are mutually independent standard normal vectors that are   independent of $\xi_0$ and $W$.
The associated cost   with fixed realisations of $\vartheta$, $\xi_0$ and $W$ is defined as:
 \begin{align}\label{eq:reg_cost_randomised}
    \begin{split}
   %
\hat{\cC}(\theta)\coloneqq 
 & 
\int_0^T \left( 
\frac{1}{2}
\left \la \begin{pmatrix}
Q_t &S^\top_t
\\
S_t & R_t
\end{pmatrix}
  \begin{pmatrix}
X^{\theta,\zeta}_t
\\
\phi_t( X^{\theta,\zeta}_t)
\end{pmatrix},
  \begin{pmatrix}
X^{\theta,\zeta}_t
\\
\phi_t( X^{\theta,\zeta}_t)
\end{pmatrix}
\right\ra    + \rho\cH(\nu^\theta_t(X^{\theta,\zeta}_t)\| \ol{\mathfrak{m}}_t) \right)\d t
 +\frac{1}{2}(X^{\theta,\zeta}_T)^\top G X^{\theta,\zeta}_T,
    \end{split}
\end{align}
where by $\ol{\mathfrak{m}}_t=\cN(0,\bar{V}_t)$ (see Lemma \ref{lemma:representation_KV}),
$$
\cH(\nu^\theta_t(X^{\theta,\zeta}_t)\| \ol{\mathfrak{m}}_t) 
= \frac{1}{2}\left(\tr\Big(K^\top_t \bar{V}_t^{{-1}} K_t  X^{\theta,\zeta}_t (X^{\theta,\zeta}_t)^\top+ \bar{V}_t^{{-1}}V_t\Big)-k+\ln\left(\frac{\det(\bar{V}_t)}{\det(V_t)} \right)\right).
$$
In \eqref{eq:state_randomise}, 
the linear dynamics
\eqref{eq:state_strict} is controlled by sampling      actions 
 from   $\nu^{\theta^\pi}$   using the injected    noises $(\zeta_i)_{i=0}^{M-1}$,
and \eqref{eq:reg_cost_randomised} is the   quadratic cost induced by   these random actions. 
Then, 
by arguments similar to those in  \cite[Theorem 2.2]{szpruch2022optimal},  
$|\sE[X^{\theta,\zeta}_t (X^{\theta,\zeta}_t)^\top]-\Sigma^{\theta}_t|\le C |\tilde{\pi}|  $ 
for all $t\in [0,T]$,
and $|\sE[ \hat{\cC}(\theta)]-{\cC}(\theta)|\le C |\tilde{\pi}| $,
with a constant $C$ independent of $\tilde{\pi}$. 
One can also establish an error bound of the order    $\cO(\sqrt{|\tilde{\pi}|}) $   in the
high-probability sense with respect to the noise process $\vartheta$. 

The above observation suggests that,
at each iteration of \eqref{eq:NPG_discrete_finite}, 
 the  gradients   $ \nabla_{K_i}\cC(\theta^{\pi,n})$, $ \nabla_{V_i}\cC (\theta^{\pi,n})$
      and the state covariance $\Sigma_{t_i}^{\theta^{\pi,n}}$ at all grid points of $\pi$
      can be estimated 
using Monte Carlo methods without relying on knowledge of the  coefficients in \eqref{eq:reg_state_closed_loop} and   \eqref{eq:reg_cost_closed_loop}.   
By choosing a sufficiently fine randomisation grid $\tilde{\pi}$, 
the  covariance $\Sigma_{t_i}^{\theta^{\pi,n}}$ can be estimated by  
the empirical covariance of   $X^{\theta^{\pi,n},\zeta}$ 
corresponding to   different realisations of $\vartheta$, $W$ and $\xi_0$. 
 The gradients of the cost $\cC  (\theta^{\pi,n})$ can be approximated by 
 suitable zero-order optimisation methods 
 based on    trajectories of the cost \eqref{eq:reg_cost_randomised} 
(see e.g.,  \cite{fazel2018global, hambly2020policy, berahas2022theoretical}). 
It   would be interesting to quantify the precise sample efficiency of 
such a model-free implementation of \eqref{eq:NPG_discrete_finite}.
This would entail  
estimating the  approximation errors of $ \nabla_{K_i}\cC(\theta^{\pi,n})$, $ \nabla_{V_i}\cC (\theta^{\pi,n})$ 
and $\Sigma_{t_i}^{\theta^{\pi,n}}$
in terms of the sample frequency $|\tilde{\pi}|^{-1}$ 
and the sample size,
and  quantifying the  precise error propagation  through the gradient descent iteration.
We leave a rigorous analysis of
such a model-free algorithm  for future research.  
}      

\section{Proofs}
\subsection{Analysis of optimisation landscape}
\label{sec:proof_cost_lanscape}

This section proves  the regularity of the cost functional $\cC$ in \eqref{eq:reg_cost_closed_loop} given in Section \ref{sec:landscape}.

We start by proving several technical lemmas. 
The following lemma expresses the coefficients of \eqref{eq:reg_state_closed_loop}
and 
the cost function \eqref{eq:reg_cost_closed_loop} in terms of $\theta=(K,V)$. 
The proof follows from a straightforward computation and hence is omitted. 
\begin{Lemma}
\label{lemma:representation_KV}
Suppose (H.\ref{assum:coefficident}) holds.
Then for all $\nu^\theta\in \cV$ and $(t,x)\in [0,T]\t \sR^d$,
\begin{align*}
	\Phi_t(x, \nu^\theta_t(x)) 
	&= (A_t+B_tK_t) x,
\\ 
	\Gamma_t(x, \nu^\theta_t(x) )&=\left( (C_t+D_tK_t) xx^\top (C_t +D_tK_t)^\top+D_tV_tD^\top_t \right)^{\frac{1}{2}},
	\\
\int_{\sR^k} 
\left \la \begin{pmatrix}
Q_t &S^\top_t
\\
S_t & R_t
\end{pmatrix}
  \begin{pmatrix}
x
\\
a 
\end{pmatrix},
  \begin{pmatrix}
x
\\
a 
\end{pmatrix}
\right \ra \,\nu^\theta_t(x;\d a)
&=
\left \la \begin{pmatrix}
Q_t &S^\top_t
\\
S_t & R_t
\end{pmatrix}
  \begin{pmatrix}
x
\\
K_t x
\end{pmatrix},
  \begin{pmatrix}
x
\\
K_t x
\end{pmatrix}
\right \ra
+\tr (R_tV_t),
\\
\cH(\nu^\theta_t(x)\| \ol{\mathfrak{m}}_t)
&=  \frac{1}{2}\left((K_tx)^\top\bar{V}_t^{{-1}} K_tx+\tr(\bar{V}_t^{{-1}}V_t)-k+\ln\left(\frac{\det(\bar{V}_t)}{\det(V_t)} \right)\right).
\end{align*}
\end{Lemma}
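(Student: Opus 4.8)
The plan is to observe that all four identities are obtained by integrating a polynomial of degree at most two in the action variable $a$ against the Gaussian measure $\nu^\theta_t(x)=\cN(K_t x, V_t)$, so the entire lemma reduces to recording the first two moments of this measure and substituting. I would therefore begin by stating the two elementary facts that, for each $(t,x)\in[0,T]\t\sR^d$, $\int_{\sR^k} a\,\nu^\theta_t(x;\d a)=K_t x$ and $\int_{\sR^k} a a^\top\,\nu^\theta_t(x;\d a)=K_t x x^\top K_t^\top + V_t$, the latter being the mean-plus-covariance decomposition of the second moment of a Gaussian. With these in hand each identity follows by direct insertion.

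For the first identity I would substitute the definition of $\Phi$ from \eqref{eq:reg_coefficient}, pull the (deterministic) matrices out of the integral by linearity, and apply the first-moment identity to get $\int_{\sR^k}(A_t x+B_t a)\,\nu^\theta_t(x;\d a)=A_t x+B_t K_t x=(A_t+B_tK_t)x$. For the second identity I would expand $(C_t x+D_t a)(C_t x+D_t a)^\top$ into its four terms and integrate each: the $x$-only term is unchanged, the two cross terms give $D_t K_t x x^\top C_t^\top$ and its transpose via the first moment, and the $a a^\top$ term gives $D_t(K_t x x^\top K_t^\top+V_t)D_t^\top$ via the second moment. Collecting terms and recombining the rank-one part into a perfect square yields $(C_t+D_tK_t)x x^\top(C_t+D_tK_t)^\top + D_t V_t D_t^\top$, and applying the matrix square root $(\cdot)^{1/2}$ of \eqref{eq:reg_coefficient} gives the claim.

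For the running-cost identity I would expand the block quadratic form as $x^\top Q_t x + 2\,a^\top S_t x + a^\top R_t a$. Integrating termwise, the first term is constant in $a$, the linear term becomes $2\,(K_t x)^\top S_t x$ by the first moment, and the quadratic term becomes $\tr\!\big(R_t(K_t x x^\top K_t^\top+V_t)\big)=(K_t x)^\top R_t (K_t x)+\tr(R_t V_t)$ by the second moment and the cyclic property of the trace. Recognising $x^\top Q_t x + 2\,(K_t x)^\top S_t x + (K_t x)^\top R_t (K_t x)$ as the same block quadratic form evaluated at $a=K_t x$ produces exactly the stated expression plus the residual $\tr(R_t V_t)$.

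For the entropy identity I would invoke (or derive in one line from the definition of $\cH$ by integrating the logarithm of the ratio of Gaussian densities) the standard closed form for the relative entropy of two nondegenerate Gaussians, $\cH(\cN(\mu_1,\Sigma_1)\Vert\cN(\mu_2,\Sigma_2))=\tfrac12\big(\tr(\Sigma_2^{-1}\Sigma_1)+(\mu_1-\mu_2)^\top\Sigma_2^{-1}(\mu_1-\mu_2)-k+\ln(\det\Sigma_2/\det\Sigma_1)\big)$, and specialise to $\mu_1=K_t x,\ \Sigma_1=V_t,\ \mu_2=0,\ \Sigma_2=\bar V_t$; the positive definiteness of $V_t$ (from the definition of $\Theta$) and of $\bar V_t$ (from (H.\ref{assum:coefficident})) guarantees that all inverses, determinants and logarithms are well defined and the relative entropy is finite. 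There is no genuine obstacle here, since the content is entirely the two Gaussian moments together with the Gaussian relative-entropy formula; the only points requiring a little care are the matrix bookkeeping in the $\Gamma$ computation (matching the rank-one-plus-covariance structure before taking the square root) and verifying the nondegeneracy needed for the entropy formula to apply.
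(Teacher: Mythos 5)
Your proposal is correct and takes exactly the route the paper intends: the paper omits the proof as a ``straightforward computation,'' and your argument --- reducing everything to the Gaussian first and second moments $\int a\,\nu^\theta_t(x;\d a)=K_tx$ and $\int aa^\top\nu^\theta_t(x;\d a)=K_txx^\top K_t^\top+V_t$, then substituting into $\Phi$, $\Gamma$, the block quadratic form, and the closed-form Gaussian relative entropy --- is precisely that computation, carried out correctly, including the recombination of the rank-one term before taking the matrix square root and the nondegeneracy check ($V_t\succeq\eps I_k$ from $\Theta$, $\bar V_t\succeq\delta I_k$ from (H.\ref{assum:coefficident}\ref{item:regulariser})) that makes the entropy formula applicable.
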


The next lemma represents 
  the cost $\cC(\theta)$ in terms of   $P^\theta$ defined in \eqref{eq:lyapunov_reg}. 
%

%

\begin{Lemma}
\label{lemma:value_PDE}
Suppose (H.\ref{assum:coefficident}) holds.
For each $\theta\in \Theta$,
let    $P^\theta\in C([0,T];\sS^d)$ satisfy     \eqref{eq:lyapunov_reg},
let  $\varphi^\theta\in C([0,T];\sR^d)$ 
satisfy   
for a.e.~$t \in [0,T]$,
\begin{align*}
		(\tfrac{\d }{\d t}{\varphi})_t + &  \tfrac{1}{2}\tr\left(  (D_t^\top P^\theta_tD_t+R_t+\rho \bar{V}_t^{-1} )V_t\right)  +\tfrac{\rho}{2}\left( -k+\ln\left(\tfrac{\det(\bar{V}_t)}{\det(V_t)} \right)\right) =0;
		\quad 
		\varphi_T  = 0,
\end{align*}
and let
 $u^\theta:[0,T]\t \sR^d \to \sR$ be such that 
$u^\theta_t(x) = \frac{1}{2}x^\top P^\theta_t x+\varphi^\theta_t$
for all $(t,x)\in [0,T]\t \sR^d $.
Then 
for all $\theta \in \Theta$ and $x\in \sR^d$,
\begin{align}\label{eq:PDE_reg}
	\begin{split}
	(\tfrac{\d }{\d t}u^\theta)_t
	&+	\frac{1}{2}\tr\left(\Gamma_t(x, \nu_t^\theta(x) )\Gamma_t(x, \nu_t^\theta(x) )^\top (\nabla^2_xu^\theta)_t(x)\right)+ \Phi_t(x, \nu_t^\theta(x) )^\top (\nabla_x u^\theta)_t (x)
		\\
&   +\frac{1}{2}\left(x^\top Q_tx
	+x^\top S^\top_t  K_t x
	+	(K_tx)^\top  S_t x
	+ (K_tx)^\top R_t  K_tx 
	+  \tr(R_t V_t)	\right)
	\\
	& 	+ \frac{\rho}{2}\left((K_tx)^\top\bar{V}_t^{{-1}} K_tx+\tr(\bar{V}_t^{{-1}}V_t)-k+\ln\left(\frac{\det(\bar{V}_t)}{\det(V_t)} \right)\right)=0,
	\q \textnormal{a.e.~$t\in [0,T]$,}
	\end{split}
\end{align}
and $u^{\theta}_T(x)=\frac{1}{2}x^\top Gx$,
where $\nabla_x u^\theta$ and 
$\nabla^2_x u^\theta$ are the gradient and Hessian of $u^\theta$ in $x$, respectively. 
Moreover, it holds that 
$\cC(\theta)=\sE[u_0^\theta(\xi_0)]$.
\end{Lemma}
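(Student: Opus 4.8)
The plan is to regard $u^\theta$ as the cost-to-go of the fixed Gaussian policy $\nu^\theta$: I would first verify by direct differentiation that it solves the linear PDE \eqref{eq:PDE_reg} with the asserted terminal value, and then read off $\cC(\theta)=\sE[u^\theta_0(\xi_0)]$ from Itô's formula.

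\emph{Verification of the PDE.} Since $u^\theta_t(x)=\tfrac{1}{2}x^\top P^\theta_t x+\varphi^\theta_t$ and $P^\theta_t$ is symmetric, $(\nabla_x u^\theta)_t(x)=P^\theta_t x$, $(\nabla^2_x u^\theta)_t(x)=P^\theta_t$, and $(\tfrac{\d}{\d t}u^\theta)_t=\tfrac{1}{2}x^\top(\tfrac{\d}{\d t}P^\theta)_t x+(\tfrac{\d}{\d t}\varphi^\theta)_t$. Substituting the expressions for $\Phi_t(x,\nu^\theta_t(x))$ and for $\Gamma_t(x,\nu^\theta_t(x))\Gamma_t(x,\nu^\theta_t(x))^\top=(C_t+D_tK_t)xx^\top(C_t+D_tK_t)^\top+D_tV_tD_t^\top$ from Lemma \ref{lemma:representation_KV} into the left-hand side of \eqref{eq:PDE_reg}, and using the cyclicity of the trace (in particular $\tr(D_tV_tD_t^\top P^\theta_t)=\tr((D_t^\top P^\theta_t D_t)V_t)$), the whole expression separates into a quadratic-in-$x$ part and an $x$-independent part. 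After symmetrising the drift contribution $x^\top(A_t+B_tK_t)^\top P^\theta_t x=\tfrac{1}{2}x^\top((A_t+B_tK_t)^\top P^\theta_t+P^\theta_t(A_t+B_tK_t))x$, the quadratic part equals $\tfrac{1}{2}x^\top(\cdot)x$ with $(\cdot)$ precisely the left-hand side of the Lyapunov equation \eqref{eq:lyapunov_reg} for $P^\theta$, and hence vanishes; grouping $\tfrac{1}{2}\tr(R_tV_t)$ with the $\rho$-entropy terms, the $x$-independent part equals the left-hand side of the scalar ODE defining $\varphi^\theta$, and hence also vanishes. The terminal condition follows from $P^\theta_T=G$ and $\varphi^\theta_T=0$, giving $u^\theta_T(x)=\tfrac{1}{2}x^\top Gx$.

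\emph{The cost representation.} I would apply Itô's formula to $t\mapsto u^\theta_t(X^\theta_t)$ along \eqref{eq:reg_state_closed_loop} and use \eqref{eq:PDE_reg} to replace $(\tfrac{\d}{\d t}u^\theta)_t+\Phi_t^\top(\nabla_x u^\theta)_t+\tfrac{1}{2}\tr(\Gamma_t\Gamma_t^\top(\nabla^2_x u^\theta)_t)$ by the negative of the running-cost-plus-entropy integrand. This gives that $u^\theta_T(X^\theta_T)$ equals $u^\theta_0(\xi_0)$ minus the time integral of that integrand, plus a stochastic integral. Taking expectations, using $u^\theta_T(X^\theta_T)=\tfrac{1}{2}(X^\theta_T)^\top GX^\theta_T$ and $X^\theta_0=\xi_0$, and identifying the integrand with that of $\cC$ in \eqref{eq:reg_cost_closed_loop} via Lemma \ref{lemma:representation_KV}, rearranging yields $\sE[u^\theta_0(\xi_0)]=\cC(\theta)$.

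\emph{Main obstacle.} The delicate point is that the stochastic integral has zero expectation. Since the coefficients in (H.\ref{assum:coefficident}) are only $L^1$/$L^2$/$L^\infty$ in time and $X^\theta$ lies only in $\cS^2(0,T;\sR^d)$, the integrand $(\nabla_x u^\theta_t(X^\theta_t))^\top\Gamma_t(X^\theta_t,\nu^\theta_t(X^\theta_t))$ need not be square-integrable, so I would argue by localisation: take stopping times $\tau_n\uparrow T$ for which the stopped stochastic integral is a genuine martingale, apply the identity on $[0,\tau_n]$, and let $n\to\infty$. The quadratic growth of $u^\theta$ together with $X^\theta\in\cS^2(0,T;\sR^d)$ gives $\sup_{t\in[0,T]}|u^\theta_t(X^\theta_t)|\in L^1$, so the terminal term converges by dominated convergence; and the running-cost integrand is dominated by $c_t(1+\sup_{t\in[0,T]}|X^\theta_t|^2)$ with $\int_0^T c_t\,\d t<\infty$ (using $Q\in L^1$, $S,K\in L^2$, $R\in L^\infty$ and the bounds $\eps I_k\preceq V\preceq\tfrac{1}{\eps}I_k$, $\bar V\succeq\delta I_k$ coming from $\Theta$ and (H.\ref{assum:coefficident})), so the cost integral converges as well, completing the identification.
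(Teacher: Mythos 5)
Your proposal is correct and follows essentially the same route as the paper's proof: verify the PDE \eqref{eq:PDE_reg} by matching the quadratic-in-$x$ part against the Lyapunov equation \eqref{eq:lyapunov_reg} and the $x$-independent part against the ODE for $\varphi^\theta$ (using Lemma \ref{lemma:representation_KV} and cyclicity of the trace), then apply It\^{o}'s formula to $t\mapsto u^\theta_t(X^\theta_t)$ and take expectations. The only divergence is at the martingale step, where the paper directly asserts that $\int_0^\cdot (\nabla_x u)_t(X_t)^\top\Gamma_t(X_t,\nu_t(X_t))\,\d W_t$ is a true martingale from the integrability of $C,D,\theta$ and $X$ (which one can justify via the Burkholder--Davis--Gundy criterion $\sE[\langle M\rangle_T^{1/2}]<\infty$, valid here since $|C|+|DK|\in L^2(0,T)$ and $X^\theta\in\cS^2$ give $\sE\big[\big(\int_0^T |(P_tX_t)^\top\Gamma_t|^2\,\d t\big)^{1/2}\big]<\infty$ even though the integrand need not lie in $L^2(\Om\t[0,T])$), whereas your localisation-plus-dominated-convergence argument reaches the same conclusion with a little more work and is equally valid.
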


\begin{proof}
Let $X^\theta\in \cS^2(0,T;\sR^d)$ be the solution to \eqref{eq:reg_state_closed_loop}.
For notational simplicity, 
we omit  $\theta$ in the superscripts of all variables.

 By Lemma \ref{lemma:representation_KV} and the definition of $u$,
 for all $(t,x)\in [0,T]$,
 \begin{align*}
 \Phi_t(x, \nu_t^\theta(x) )^\top (\nabla_x u^\theta)_t (x)
& = \tfrac{1}{2}x^\top\left((A_t+B_tK_t)^\top P_t+P_t(A_t+B_tK_t)\right) x,
 \\
 \tr\left(\Gamma_t(x, \nu_t^\theta(x) )\Gamma_t(x, \nu_t^\theta(x) )^\top (\nabla^2_xu^\theta)_t(x)\right)
 &=
 \tr\left( \left((C_t+D_tK_t) xx^\top (C_t +D_tK_t)^\top+DV_tD^\top\right)
P_t\right).
\end{align*}
Then one can easily see from the definitions of $P$ and $\varphi$
that $u$ satisfies \eqref{eq:PDE_reg} 
for a.e.~$t\in [0,T]$ and all $x\in \sR^d $,
and $u_T(x)=\frac{1}{2}x^\top Gx$.

Now 
applying It\^{o}'s formula to 
$t\mapsto  u_t(X_t)$ implies that 
\begin{align}
\label{eq:ito_u}
\begin{split}
u_T(X_T)
 &=
u_0(X_0)+
 \int_{0}^{T}
\bigg((\tfrac{\d }{\d t}u)_t(X_t)
 +	\frac{1}{2}\tr\left(\Gamma_t(X_t, \nu_t(X_t) )\Gamma_t(X_t, \nu_t(X_t) )^\top (\nabla^2_xu)_t(X_t)\right)
\\
&\quad 
+ \Phi_t(X_t, \nu_t(X_t) )^\top (\nabla_x u)_t(X_t) \bigg)\, \d t 
 +\int_0^T (\nabla_x u)_t(X_t)^\top \Gamma_t(X_t, \nu_t(X_t) )\,\d W_t.
 \end{split}
\end{align}
By the identity  $\nabla _x u_t = P_t x$
and the integrability of $C,D, \theta$ and $X$,
$\int_0^{\cdot} (\nabla_x u)_t(X_t)^\top \Gamma_t(X_t, \nu_t(X_t) )\,\d W_t$ is a martingale.
Hence taking expectations on both sides of \eqref{eq:ito_u}
and using \eqref{eq:PDE_reg} give that
 \begin{align}
 \label{eq:C_theta_X}
 \begin{split}
\sE[u_0(\xi_0)]
 &=
\sE\left[\frac{1}{2}X^\top_TGX_T\right]+\sE\bigg[\int_{0}^{T}
\bigg\{
\frac{1}{2}\left(
\left \la \begin{pmatrix}
Q_t &S^\top_t
\\
S_t & R_t
\end{pmatrix}
  \begin{pmatrix}
X_t
\\
K_t X_t
\end{pmatrix},
  \begin{pmatrix}
X_t
\\
K_t X_t
\end{pmatrix}
\right \ra
+  \tr(R_t V_t)	\right)
\\
&\quad 
+ 
\frac{\rho}{2}\left((K_tX_t )^\top\bar{V}_t^{{-1}} K_tX_t +\tr(\bar{V}_t^{{-1}}V_t)-k+\ln\left(\frac{\det(\bar{V}_t)}{\det(V_t)} \right)\right)
 \bigg\}\, \d t\bigg],
 \end{split}
\end{align}
which along with Lemma \ref{lemma:representation_KV} leads to 
the 
 desired identity $\cC(\theta)= \sE[u_0(\xi_0)]$.
 \end{proof}


The following lemma  quantifies the difference of   value functions for two   policies.

\begin{Lemma}
\label{lemma:performance_gap}
Suppose (H.\ref{assum:coefficident}) holds.
For each $\theta\in \Theta$,
let
   $P^\theta\in C([0,T];\sS^d)$ satisfy     \eqref{eq:lyapunov_reg},
and  let 
   $\Sigma^\theta \in C([0,T];\ol{\sS^d_+})$ satisfy \eqref{eq:lq_sde_K_reg_cov}.
Then for all $\theta,\theta'\in \Theta$,
 \begin{align*}
 \cC(\theta')-\cC(\theta)
 &=
\int_0^T 
\bigg(
\la  K'_t-K_t, \cD_K(\theta)_t\Sigma^{\theta'}_t\ra 
+
\frac{1}{2}\la K'_t-K_t , (D^\top_t  P^\theta_t  D_t   +R_t+\rho \bar{V}_t^{-1}) (K'_t-K_t)\Sigma^{\theta'}_t\ra
\\
&\quad +\ell_t(V'_t,P^\theta_t)-\ell_t(V_t,P^\theta_t)
\bigg)\, \d t,
\end{align*}
where
$\cD_K(\theta)_t$   is defined by  \eqref{eq:D_K},
and 
 $\ell:[0,T]\t \sS^k_+\t \sR^{d\t d}\to \sR$ is given by
\bb\label{eq:entropy_V}
\ell_t(V,Z)=\frac{1}{2} \left(  \la  D^\top_t Z D_t + R_t +\rho \bar{V}_t^{{-1} }, V\ra -\rho\ln(\det(V))\right) 
\q 
\fa (t,V,Z)\in [0,T]\t \sS^k_+\t \sR^{d\t d}.
\ee

\end{Lemma}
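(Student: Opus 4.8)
The plan is to compute the cost difference by the standard ``cost-to-go along the other trajectory'' device: evaluate the value function $u^\theta$ of policy $\theta$ along the state process $X^{\theta'}$ driven by the competing policy $\theta'$. This asymmetry between $\theta$ and $\theta'$ is exactly what produces the claimed structure, in which $P^\theta$ enters through $\cD_K(\theta)$, through $D^\top P^\theta D$, and through $\ell_t(\cdot,P^\theta)$, whereas the second moment $\Sigma^{\theta'}$ enters only through taking expectations of quadratics in $X^{\theta'}$.

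First I would recall from Lemma \ref{lemma:value_PDE} that $u^\theta_t(x)=\tfrac12 x^\top P^\theta_t x+\varphi^\theta_t$ solves the linear PDE \eqref{eq:PDE_reg} with terminal data $u^\theta_T(x)=\tfrac12 x^\top G x$ and that $\cC(\theta)=\sE[u^\theta_0(\xi_0)]$. Write $\cL^{\theta'}_t$ for the generator of \eqref{eq:reg_state_closed_loop} under $\theta'$, with drift $\Phi_t(\cdot,\nu^{\theta'}_t(\cdot))$ and diffusion $\Gamma_t(\cdot,\nu^{\theta'}_t(\cdot))$ as in Lemma \ref{lemma:representation_KV}, and let $r^\theta_t$ denote the running cost (quadratic plus entropy) of policy $\theta$ from the same lemma. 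Applying It\^o's formula to $t\mapsto u^\theta_t(X^{\theta'}_t)$ and taking expectations, the stochastic integral is a true martingale because $\nabla_x u^\theta_t=P^\theta_t x$ and $X^{\theta'}\in\cS^2(0,T;\sR^d)$ together with the integrability of $C,D,\theta'$ (exactly the argument used in Lemma \ref{lemma:value_PDE}), so only the drift term survives.

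Next I would use the PDE to eliminate $\partial_t u^\theta$. Along $X^{\theta'}$, \eqref{eq:PDE_reg} for policy $\theta$ reads $(\tfrac{\d}{\d t}u^\theta)_t+\cL^\theta_t u^\theta_t+r^\theta_t=0$, so the It\^o drift integrand equals $(\cL^{\theta'}_t-\cL^\theta_t)u^\theta_t(X^{\theta'}_t)-r^\theta_t(X^{\theta'}_t)$. Combining this with $u^\theta_T(x)=\tfrac12 x^\top G x$, with $\cC(\theta)=\sE[u^\theta_0(\xi_0)]$, and with the representation $\cC(\theta')=\sE[\int_0^T r^{\theta'}_t(X^{\theta'}_t)\,\d t+\tfrac12 (X^{\theta'}_T)^\top G X^{\theta'}_T]$ coming from \eqref{eq:reg_cost_closed_loop} and Lemma \ref{lemma:representation_KV}, I obtain
\[
\cC(\theta')-\cC(\theta)=\sE\Big[\int_0^T\big((r^{\theta'}_t-r^\theta_t)(X^{\theta'}_t)+(\cL^{\theta'}_t-\cL^\theta_t)u^\theta_t(X^{\theta'}_t)\big)\,\d t\Big].
\]

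Finally I would expand the two differences explicitly via Lemma \ref{lemma:representation_KV} and $\nabla^2_x u^\theta_t=P^\theta_t$, and convert every quadratic expression in $X^{\theta'}_t$ using $\sE[X^{\theta'}_t(X^{\theta'}_t)^\top]=\Sigma^{\theta'}_t$. Setting $\Delta K=K'-K$ and $\tilde R_t=R_t+\rho\bar{V}_t^{-1}$, the terms linear in $\Delta K$ (from the drift, the diffusion cross term, the $S$-term, and the quadratic-cost cross term) should collect into $B^\top P^\theta+D^\top P^\theta(C+DK)+S+\tilde R K=\cD_K(\theta)$, paired as $\la \Delta K,\cD_K(\theta)_t\Sigma^{\theta'}_t\ra$; the terms quadratic in $\Delta K$ should assemble into $\tfrac12\la \Delta K,(D^\top P^\theta D+\tilde R)\Delta K\,\Sigma^{\theta'}_t\ra$; and the state-independent pieces ($\tr(RV)$, the $\tr(D^\top P^\theta D\,V)$ diffusion term, $\rho\tr(\bar V^{-1}V)$ and $\rho\ln\det V$) should recombine into $\ell_t(V',P^\theta)-\ell_t(V,P^\theta)$ with $\ell$ as in \eqref{eq:entropy_V}. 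I expect the main obstacle to be precisely this last bookkeeping: repeatedly symmetrizing and using cyclicity of the trace to move $\Delta K$ into the $\la \Delta K,\,\cdot\,\Sigma^{\theta'}_t\ra$ slot, and verifying that the entropic and logarithmic pieces regroup exactly into $\ell$. None of it is deep, but it is error-prone and must be carried out with care.
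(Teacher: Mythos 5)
Your proposal is correct and follows essentially the same route as the paper: both reduce $\cC(\theta')-\cC(\theta)$ to $\sE[\int_0^T F_t(X^{\theta'}_t)\,\d t]$ with the identical integrand $F_t=(r^{\theta'}_t-r^\theta_t)+(\cL^{\theta'}_t-\cL^\theta_t)u^\theta_t$ via It\^{o}'s formula and Lemma \ref{lemma:value_PDE} (the paper applies It\^{o} to $(u^{\theta'}-u^\theta)_t(X^{\theta'}_t)$ rather than to $u^\theta_t(X^{\theta'}_t)$, a trivially equivalent variant), and then regroup the terms exactly as you describe. The only cosmetic difference is that the paper organises the final bookkeeping through a modified Hamiltonian $\ol{H}$ that is quadratic in $a$, so the linear/quadratic split in $K'-K$ falls out of a two-term Taylor expansion with $\p_a\ol{H}$ giving $\cD_K(\theta)$ and $\p^2_a\ol{H}=D^\top P^\theta D+R+\rho\bar{V}^{-1}$, which automates the symmetrisation you flag as error-prone.
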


\begin{proof}
Throughout this proof, let $\theta,\theta'\in \Theta$ be given, 
let $(P,\Sigma)=(P^\theta,\Sigma^\theta)$,
 $(P',\Sigma')=(P^{\theta'},\Sigma^{\theta'})$,
 $u=u^\theta$ and 
  $u'=u^{\theta'}$,
  where for each $\theta\in \Theta$,
  $u^\theta:[0,T]\t \sR^d\to \sR$ is defined as in  Lemma \ref{lemma:value_PDE}. 
  By \eqref{eq:PDE_reg},
for all   $x\in \sR^d$,
$(u'-u)_T(x)=0$, 
  \begin{align}
  \label{eq:pde_u'-u}
	\begin{split}
	(\tfrac{\d }{\d t} (u'-u))_t
	&+	\frac{1}{2}\tr\left(\Gamma_t(x, \nu_t^{\theta'}(x) )\Gamma_t(x, \nu_t^{\theta'}(x) )^\top (\nabla^2_x (u'-u))_t(x)\right)
		\\
&   + \Phi_t(x, \nu_t^{\theta'}(x) )^\top (\nabla_x (u'-u))_t (x) +F_t(x)=0,
\quad \textnormal{a.e.~$t\in [0,T]$,}
 	\end{split}
\end{align}
where $F:[0,T]\t \sR^d\to \sR$ is given by
  \begin{align*}
	\begin{split}
	F_t(x)
&  	=
		\frac{1}{2}\tr\left(\Gamma_t(x, \nu_t^{\theta'}(x) )\Gamma_t(x, \nu_t^{\theta'}(x) )^\top (\nabla^2_x u)_t(x)\right)+ \Phi_t(x, \nu_t^{\theta'}(x) )^\top (\nabla_x u)_t (x)
		\\
&\quad {- \frac{1}{2}\tr\left(\Gamma_t(x, \nu_t^{\theta}(x) )\Gamma_t(x, \nu_t^{\theta}(x) )^\top (\nabla^2_x u)_t(x)\right)-\Phi_t(x, \nu_t^{\theta}(x) )^\top (\nabla_x u)_t (x)}
\\
& \quad 
+\frac{1}{2}\bigg[  \left(x^\top Q_tx
	+x^\top S^\top_t  K'_t x
	+	(K'_tx)^\top  S_t x
	+(K'_tx)^\top R_t  K'_tx +  \tr(R_t V'_t)	\right)
\\
&\q 	
	-\left(x^\top Q_tx
	+x^\top S^\top_t  K_t x
	+	(K_tx)^\top  S_t x
	+(K_tx)^\top R_t  K_tx +  \tr(R_t V_t)	\right)
	\bigg]
	\\
	& \quad 
	+ \frac{\rho}{2}
	\bigg[\left((K'_tx)^\top\bar{V}_t^{{-1}} K'_tx+\tr(\bar{V}_t^{{-1}}V'_t)
	-\ln\left(\det(V'_t)\right)\right)
	\\
	&\quad 
	-\left((K_tx)^\top\bar{V}_t^{{-1}} K_tx+\tr(\bar{V}_t^{{-1}}V_t)-\ln\left(\det(V_t)\right)\right)
	\bigg].
 	\end{split}
\end{align*}
 Applying It\^{o}'s formula to 
$t\mapsto (u'-u)_t(X_t^{\theta'})$ 
(recall the definition of $u^\theta$ in 
Lemma \ref{lemma:value_PDE})
and  using \eqref{eq:pde_u'-u}
yield  that 
$$
\sE[(u'-u)_T(X_T^{\theta'})]-\sE[(u'-u)_0(X_0^{\theta'})]=
\sE\left[\int_0^T 
-F_t(X_t^{\theta'})\, \d t
\right],
$$
which along with 
$\cC(\theta)=\sE[u^\theta_0(\xi_0)]$
(see  Lemma  \ref{lemma:value_PDE})
and $(u'-u)_T=0$ implies that 
\bb
\label{eq:difference_C_F}
\cC(\theta')-\cC(\theta)=
\sE\left[\int_0^T 
F_t(X_t^{\theta'})\, \d t
\right].
\ee
We now simplify the expression of  $F_t(x)$ for any given $(t,x )\in [0,T]\t \sR^d$. 
To this end, let $\ol{H}:[0,T]\t \sR^d\t \sR^k\t \sR^d\t \sR^{d\t d}\to \sR$ 
be a modified Hamiltonian such that $(t,x,a,y,z)\in [0,T]\t \sR^d\t \sR^k\t \sR^d\t \sR^{d\t d}$, 
\begin{align*}
\ol{H}_t(x,a,y,z)
&=
\tfrac{1}{2}\tr\left((C_tx+D_t a)(C_tx+D_ta)^\top z\right)+\la  A_tx+B_t a,  y  \ra 
\\
& \q +\tfrac{1}{2}  \left(x^\top Q_tx
+x^\top S^\top_t  a
	+	a^\top  S_t x
+	a^\top (R_t+\rho \bar{V}_t^{-1})  a 	\right),
\end{align*}
and let $\ell:[0,T]\t \sS^k_+\t \sR^{d\t d}\to \sR$ be defined as in 
\eqref{eq:entropy_V}.
Recall that  $(\nabla_x u)_t(x) = P_t x$ and $(\nabla^2_x u)_t(x) = P_t$.
Hence by   Lemma \ref{lemma:representation_KV},
 \begin{align}
 \label{eq:F_t_H_difference}
	\begin{split}
	F_t(x)
	&=\ol{H}_t(x,K'_tx,P_t x ,P_t)-\ol{H}_t(x,K_tx,P_t x ,P_t)
	 +\ell_t(V'_t,P_t)-\ell_t(V_t,P_t).
 	\end{split}
\end{align}
Observe that
for all $(t,x,y,z)\in [0,T]\t   \sR^k\t \sR^d\t \sS^{d}$,
 $a\mapsto \ol{H}_t(x,a,y,z)$ is a quadratic function, and hence Taylor's expansion shows that for all $a,a\in \sR^k$,
 \begin{align*}
\ol{H}_t(x,a',y,z)-\ol{H}_t(x,a,y,z)
&=
\la a'-a, \p_a \ol{H}_t(x,a,y,z)  \ra 
+\frac{1}{2} 
\la a'-a, \p^2_a \ol{H}_t(x,a,y,z)(a'-a)  \ra,
\end{align*}
where $\p_a \ol{H}_t(x,a,y,z) $ and $\p^2_a \ol{H}_t(x,a,y,z)$ are given by
 \begin{align*}
\p_a\ol{H}_t(x,a,y,z)
&= D^\top_t  z (C_tx+D_ta)+ B_t^\top y +S_t x+(R_t+\rho \bar{V}_t^{-1})  a,
\\
\p^2_a\ol{H}_t(x,a,y,z)
&=D^\top_t  z  D_t   +R_t+\rho \bar{V}_t^{-1}.
\end{align*}
Substituting the above identities into  
\eqref{eq:F_t_H_difference} yields
\begin{align*}
	\begin{split}
	F_t(x)
	&=\ell_t(V'_t,P_t)-\ell_t(V_t,P_t)
	\\
& \q 	 +	\la (K'_t-K_t)x, D^\top_t  P_t (C_tx+D_t K_tx)+ B_t^\top P_t x
+S_t x
 +(R_t+\rho \bar{V}_t^{-1})  K_tx\ra 
\\
&\quad 
+\frac{1}{2}\la (K'_t-K_t)x, (D^\top_t  P_t  D_t   +R_t+\rho \bar{V}_t^{-1}) (K'_t-K_t)x\ra,
	\end{split}
\end{align*}
which along with \eqref{eq:difference_C_F},
the definition of $\cD_K(\theta)$ in \eqref{eq:D_K},
and $\Sigma'_t=\sE[X^{\theta'}_t(X^{\theta'}_t)^\top]$ 
leads to the desired conclusion.
\end{proof}

\begin{proof}[Proof of Proposition \ref{prop:Gateaux}]
For each $\theta\in \Theta$, by \eqref{eq:C_theta_X},
\begin{align}
	\begin{split}\label{eq:cost_K_reg_cov}
		\cC(\theta) =& \frac{1}{2}
		\int_0^T\bigg( \tr\left(
		(Q_t 
		+ K_t^\top S_t +S_t^\top K_t
		+K_t^\top ( R_t + {\rho}\bar{V}_t^{{-1}})K_t)\Sigma^\theta_t\right) 
		\\
		&+\tr(R_t V_t )
		+\rho\left(
		\tr(\bar{V}_t^{{-1}}V_t)-k+\ln\left(\frac{\det(\bar{V}_t)}{\det(V_t)} 			\right)
		\right)
		\bigg)\d t + \frac{1}{2}\tr (G\Sigma^\theta_T),
	\end{split}
\end{align} 
where 
 $\Sigma^\theta\in C([0,T]; \ol{\sS^d_{+}})$ satisfies 
\eqref{eq:lq_sde_K_reg_cov}.
We then  apply  
\cite[Corollary 4.11]{carmona2016lectures}
to   characterise the
Gateaux derivatives.
Let 
$H:[0,T]\t \ol{\sS^d_{+}}\t 
\sR^{k\t d}\t  \sS^k_{+}\t 
\sR^d\to \sR$ be the Hamiltonian 
of \eqref{eq:cost_K_reg_cov}-\eqref{eq:lq_sde_K_reg_cov}
such that for all $(t,\Sigma, K,V, Y)\in [0,T]\t \ol{\sS^d_{+}}\t 
\sR^{k\t d}\t  \sS^k_{+}\t 
\sR^{d\t d}$,
\begin{align*}
H_t(\Sigma, K,V, Y)
&  =
\la 
(A_t+B_tK)\Sigma+ \Sigma(A_t+B_tK)^\top +  (C_t+D_tK)\Sigma(C_t+D_tK)^\top +  D_t V D_t^\top, 
Y
 \ra
 \\
&   \q +
 \frac{1}{2}
		\bigg\{ \tr\left(
		(Q_t 
		+ K^\top S_t +S_t^\top K
		+K^\top ( R_t + {\rho}\bar{V}_t^{{-1}})K)\Sigma\right) 
		+\tr(R_t V )
\\
&   \q 		+\rho\left(
		\tr(\bar{V}_t^{{-1}}V)-k+\ln\left(\frac{\det(\bar{V}_t)}{\det(V)} 			\right)
		\right)
		\bigg\},
\end{align*}
and
for each $\theta\in \Theta$,
 let 
$Y^\theta\in C([0,T]; \sR^{d\t d})$ be the adjoint process satisfying 
\begin{align*}
(\tfrac{\d }{\d t}Y)_t = -\p_\Sigma H_t(\Sigma^\theta_t, K_t,V_t, Y_t),
\quad \textnormal{a.e.~$t\in [0,T]$};
\quad Y_T= \tfrac{1}{2}G.
\end{align*}
Then by \cite[Corollary 4.11]{carmona2016lectures}, for all $\theta,\theta\in \Theta$,
\begin{align*}
\frac{\d}{\d \eps} \cC(K+\eps K', V)\Big|_{\eps=0}
&=\int_0^T  \la 
\p_K H_t(\Sigma^\theta_t, K_t,V_t, Y^\theta_t),
K'_t
\ra\,\d t,
\\
\frac{\d}{\d \eps} \cC(K, V+\eps( V'-V))\Big|_{\eps=0}
&=\int_0^T  \la 
\p_V H_t(\Sigma^\theta_t, K_t,V_t, Y^\theta_t),
V'_t-V_t
\ra\,\d t.
\end{align*}
Observe that $Y^\theta=\frac{1}{2}P^\theta\in C([0,T];\sS^d)$, 
and 
for all $(t,\Sigma, K,V, Y)\in [0,T]\t \ol{\sS^d_{+}}\t 
\sR^{k\t d}\t  \sS^k_{+}\t 
\sS^{d}$,
\begin{align*}
\p_K H_t(\Sigma , K ,V, Y)
&=
\left(
2B_t^\top Y 
+2D_t^\top Y(C_t+D_tK) 
+ S_t+
 ( R_t + {\rho}\bar{V}_t^{{-1}})K
 \right)\Sigma,
\\
\p_V H_t(\Sigma , K ,V, Y)
&=
D_t^\top YD_t
+\tfrac{1}{2}(R_t
+{\rho}(\bar{V}_t^{{-1}}-{V}^{{-1}})).
\end{align*}
 This proves the desired claims.
\end{proof}

\begin{proof}[Proof of Proposition \ref{prop:Lojaiewicz}]
Observe from a  direct computation that  for all
$Z,\Gamma\in \sR^{k\t d}$,
$\Sigma\in \ol{\sS^k_+}$ and
  $M\in \sS^k_+$,
\begin{align}
\label{eq:quadratic_lower_bound}
\begin{split}
\la  Z, \Gamma  \Sigma \ra 
+
\frac{1}{2}\la Z , M Z \Sigma\ra
&=
\frac{1}{2}\left\la  Z +M^{-1} \Gamma, 
M\left(Z+M^{-1} \Gamma \right)\Sigma \right\ra 
-
\frac{1}{2}\la
M^{-1} \Gamma, \Gamma\Sigma \ra
\\
&\ge 
-
\frac{1}{2}\la
M^{-1} \Gamma, \Gamma\Sigma \ra,
\end{split}
\end{align}
where the last inequality uses the fact that 
$\tr (AB)\ge 0$ if $A,B\in \ol{\sS^d_+}$. 
Hence for all $\theta\in \Theta$ and $t\in [0,T]$,
    substituting \eqref{eq:quadratic_lower_bound}   with
$Z=K^\star_t-K_t$,
 $\Gamma=\cD_K(\theta)_t$,
 $\Sigma=\Sigma^{\theta^\star}_t$  and 
$M=D^\top_t  P^\theta_t  D_t   +R_t+\rho \bar{V}_t^{-1}$
yields that
\begin{align}
\label{eq:gd_K}
\begin{split}
&\int_0^T 
\left(
\la  K^\star_t-K_t, \cD_K(\theta)_t\Sigma^{\theta^\star}_t\ra 
+
\frac{1}{2}\la K^\star_t-K_t , (D^\top_t  P^\theta_t  D_t   +R_t+\rho \bar{V}_t^{-1}) (K^\star_t-K_t)\Sigma^{\theta^\star}_t\ra
\right)\, \d t
\\
&\quad 
\ge 
-
\frac{1}{2} \int_0^T 
\la
(D^\top_t  P^\theta_t  D_t   +R_t+\rho \bar{V}_t^{-1})^{-1} \cD_K(\theta)_t, \cD_K(\theta)_t\Sigma^{\theta^\star}_t\ra
\, \d t.
\end{split}
\end{align}
Then by  Lemma  \ref{lemma:performance_gap}
and \eqref{eq:gd_K}:
\begin{align}
\label{eq:PL_K}
\begin{split}
 &\cC(\theta^\star)- \cC ( {\theta} )
 \\
  &\quad =
\int_0^T 
\bigg(
\la  K^\star_t-K_t, \cD_K( {\theta})_t\Sigma^{\theta^\star}_t\ra 
+
\frac{1}{2}\la K^\star_t-K_t , (D^\top_t  P^{{\theta}}_t  D_t   +R_t+\rho \bar{V}_t^{-1}) (K^\star_t-K_t)\Sigma^{\theta^\star}_t\ra
\\
&\quad \q +\ell_t(V^\star_t,P^\theta_t)-\ell_t(V_t,P^\theta_t)
\bigg)\, \d t
\\
&\quad 
\ge 
 \int_0^T 
\left( -
\frac{1}{2}
\la
(D^\top_t  P^\theta_t  D_t   +R_t+\rho \bar{V}_t^{-1})^{-1} \cD_K(\theta)_t, \cD_K(\theta)_t\Sigma^{\theta^\star}_t\ra
+\ell_t(V^\star_t,P^\theta_t)-\ell_t(V_t,P^\theta_t)
\right)
\, \d t.
\end{split}
\end{align}

Now
by \eqref{eq:entropy_V},
for all 
$(t,Z)\in [0,T]\t  \sR^{d\t d}$
and $V,V'\in \sS^k_+$,
\begin{align*}
&\ell_t(V',Z)-\ell_t(V,Z)
\\
&
\q =\la \p_V\ell_t(V,Z), V'-V\ra 
+
\int_0^1 \left( \tfrac{\d }{\d s}\ell_t(V+s(V'-V),Z)-\la \p_V\ell_t(V,Z), V'-V\ra\right)\,\d s 
\\
&
\q =\la \p_V\ell_t(V,Z), V'-V\ra 
+
\int_0^1  
\la
\p_V\ell_t(V+s(V'-V),Z) -  \p_V\ell_t(V,Z), V'-V\ra \,\d s. 
\end{align*}
Recall  that 
$\p_V\ell_t(V,Z)= \frac{1}{2}(D^\top_t Z D_t + R_t +\rho \bar{V}_t^{{-1} }-\rho V^{-1})$,
and $A^{-1}-B^{-1}=B^{-1}(B-A)A^{-1}$ for all $A,B\in \sS^k_+$.
Then for all 
$(t,Z)\in [0,T]\t  \sR^{d\t d}$
and $V,V'\in \sS^k_+$,
\begin{align}
\label{eq:ell_expansion}
\begin{split}
&\ell_t(V',Z)-\ell_t(V,Z)
\\
& \q =\la \p_V\ell_t(V,Z), V'-V\ra 
+
\frac{\rho}{2}
\int_0^1  
\la 
V^{-1}\big( s(V'-V)\big)(V+s(V'-V))^{-1}  , V'-V\ra \,\d s.
\end{split}
\end{align}
Hence for all $\theta,\theta'\in \Theta$ and $t\in [0,T]$,
by using 
   \eqref{eq:D_V},
 the fact that 
$\tr (AB)\ge 0$ for all $A,B\in \ol{\sS^d_+}$,
and    \eqref{eq:quadratic_lower_bound}
(with $Z=V'_t-V_t$,
   $\Gamma=\cD_V(\theta)_t$,
   $\Sigma=I_k$,
   $M=\frac{\rho}{2} {\Lambda}(V'_t,V_t)^2I_k$),
\begin{align}
\label{eq:PL_V}
\begin{split}
&  \ell_t(V'_t,P^\theta_t)-\ell_t(V_t,P^\theta_t)
\\
& \q =\la \p_V\ell_t(V_t,P^\theta_t ), V'_t-V_t\ra 
+
\frac{\rho}{2}
\int_0^1  
\la 
V^{-1}_t\big( s(V'_t-V_t)\big)(V_t+s(V'_t-V_t))^{-1}  , V'_t-V_t \ra \,\d s
\\
& \q \ge 
\la 
\cD_V(\theta)_t, V'_t-V_t\ra 
+
\frac{\rho}{4} {\Lambda}(V'_t,V_t)^2
 \la 
  V'_t-V_t    , V'_t-V_t \ra
  \ge -\frac{1}{\rho {\Lambda}(V'_t,V_t)^2}|\cD_V(\theta)_t|^2,
  \end{split}
\end{align}
with $ {\Lambda}(V'_t,V_t)>0$ defined as  
\begin{align*}
{\Lambda}(V'_t,V_t)
 &\coloneqq \min_{s\in [0,1]}
 \lambda_{\min}
 \left(
 (V_t+s(V'_t-V_t))^{-1}
 \right)
 =\frac{1}{
 \max_{s\in [0,1]}
  \lambda_{\max}
 \left(
V_t+s(V'_t-V_t)
 \right)}
 \\
& =
 \frac{1}{
 \max(\lambda_{\max}(V_t),    \lambda_{\max}(V'_t) )
 }
 =
 \frac{1}{ \max(\|V_t\|_2,  \|  V'_t\|_2 ) },
\end{align*}
due to the convexity of $[0,1]\ni s\mapsto  \lambda_{\max}
 \left(
V_t+s(V'_t-V_t)
 \right)\in \sR$,
and $\|V\|_2=\lambda_{\max}(V)$ for all $V\in \ol{\sS^k_+}$.
Substituting 
\eqref{eq:PL_V}
with $V'=V^\star$ and 
using  \eqref{eq:PL_K}
yield the desired estimate \eqref{eq:gd_KV_statement}.
%
\end{proof}

\begin{proof}[Proof of Proposition \ref{prop:Lipshcitz_smooth}]
By    \eqref{eq:D_V} and \eqref{eq:ell_expansion},
for all $\theta,\theta'\in \Theta$ and $t\in [0,T]$,
\begin{align*}
&  \ell_t(V'_t,P^\theta_t)-\ell_t(V_t,P^\theta_t)
\\
& \q =\la \p_V\ell_t(V_t,P^\theta_t ), V'_t-V_t\ra 
+
\frac{\rho}{2}
\int_0^1  
\la 
V^{-1}_t\big( s(V'_t-V_t)\big)(V_t+s(V'_t-V_t))^{-1}  , V'_t-V_t \ra \,\d s
\\
& \q \le 
\la 
\cD_V(\theta)_t, V'_t-V_t\ra 
+
\frac{\rho}{4} \ol{\Lambda}(V'_t,V_t)^2
 \la 
  V'_t-V_t    , V'_t-V_t \ra,
\end{align*}
where $ \ol{\Lambda}(V'_t,V_t)>0$ is given by
\begin{align*}
\ol{\Lambda}(V'_t,V_t)
 &\coloneqq \max_{s\in [0,1]}
 \lambda_{\max}
 \left(
 (V_t+s(V'_t-V_t))^{-1}
 \right)
 =\frac{1}{
 \min_{s\in [0,1]}
  \lambda_{\min}
 \left(
V_t+s(V'_t-V_t)
 \right)}
 \\
& =
 \frac{1}{
 \min(\lambda_{\min}(V_t),    \lambda_{\min}(V'_t) )
 }.
\end{align*}
Combining this and Lemma  \ref{lemma:performance_gap}
yields the desired estimate. 
\end{proof}


\subsection{Proof of  Proposition  \ref{prop:uniform_bound}}
 \label{sec:proof_a_priori_bound}
 
 
 The following lemma   compares solutions to 
\eqref{eq:lyapunov_reg}
for different 
 $\theta,\theta'\in \Theta$.

 \begin{Lemma}
 \label{lemma:laypunov_difference}
 Suppose (H.\ref{assum:coefficident}) holds.
For each $\theta\in \Theta$,
let $P^\theta\in C([0,T];\sS^d)$ satisfy \eqref{eq:lyapunov_reg}.
Then for all $\theta,\theta'\in \Theta$,
 $\Delta P\coloneqq P^{\theta'}-P^\theta$ satisfies 
for a.e.~$t\in [0,T]$,
\begin{align*}
 \begin{split}
(\tfrac{\d }{\d t}\Delta P)_t  
&+
 (A_t+B_tK'_t)^\top \Delta P_t + \Delta P_t^\top (A_t+B_tK'_t) 
+(C_t+D_tK'_t)^\top \Delta P_t (C_t+D_tK'_t) 
\\
&+
(K'_t-K_t)^\top \cD_K(\theta)_t   
+
\cD_K(\theta)_t^\top (K'_t-K_t)
\\
& 
+(K'_t-K_t)^\top
(D^\top_t P^\theta_t D_t+R_t+\rho\bar{V}_t^{{-1}})
 (K'_t-K_t),
=0;\quad \Delta P_T=0,
\end{split}
\end{align*}
where $\cD_K(\theta)_t$ is defined in \eqref{eq:D_K}.
 \end{Lemma}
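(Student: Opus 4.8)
The plan is to derive the equation for $\Delta P = P^{\theta'}-P^\theta$ by writing down the Lyapunov equation \eqref{eq:lyapunov_reg} for each of $\theta$ and $\theta'$ and subtracting, then reorganising the resulting algebraic terms into the stated form. Throughout I write $P=P^\theta$, $P'=P^{\theta'}$ and $M_t = R_t+\rho\bar{V}_t^{{-1}}$ for brevity, recalling that $P$ and $P'$ are symmetric (so $\Delta P_t^\top = \Delta P_t$) and that $P^\theta$ depends only on $K$, not on $V$. The terminal condition is immediate, since $\Delta P_T = G-G = 0$.

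First I would introduce the linear operator $\mathcal{L}_{K'}$ associated with the drift $A+BK'$ and diffusion $C+DK'$, namely $\mathcal{L}_{K'}[Z]_t = (\tfrac{\d}{\d t}Z)_t + (A_t+B_tK'_t)^\top Z_t + Z_t(A_t+B_tK'_t) + (C_t+D_tK'_t)^\top Z_t(C_t+D_tK'_t)$, so that the target identity reads $\mathcal{L}_{K'}[\Delta P]_t + (K'_t-K_t)^\top \cD_K(\theta)_t + \cD_K(\theta)_t^\top(K'_t-K_t) + (K'_t-K_t)^\top(D_t^\top P_t D_t + M_t)(K'_t-K_t) = 0$. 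By \eqref{eq:lyapunov_reg} applied to $P'$ we have $\mathcal{L}_{K'}[P']_t = -\big((K'_t)^\top M_t K'_t + S_t^\top K'_t + (K'_t)^\top S_t + Q_t\big)$. The key step is to compute $\mathcal{L}_{K'}[P]$: applying $\mathcal{L}_{K'}$ to $P$ and then substituting $(\tfrac{\d}{\d t}P)_t$ from the equation for $P$ (which uses the operator $\mathcal{L}_K$ with $K$) removes the time derivative, while the common terms $A^\top P + PA$ and $Q$ cancel, leaving only algebraic expressions in $K'-K$. Subtracting then yields $\mathcal{L}_{K'}[\Delta P] = \mathcal{L}_{K'}[P'] - \mathcal{L}_{K'}[P]$ in closed algebraic form.

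The second step is the bookkeeping that turns these expressions into the claimed form. The part linear in $P$ contributes $B^\top P$-terms and, after expanding the diffusion difference $(C+DK')^\top P(C+DK') - (C+DK)^\top P(C+DK)$ via the identity $u^\top P u - v^\top P v = (C+v)^\top P(u-v)+(u-v)^\top P(C+v)+(u-v)^\top P(u-v)$ with $u=DK'$ and $v=DK$, exactly the $D^\top P D$ quadratic term together with the cross terms matching $D^\top P(C+DK)$ in $\cD_K(\theta)$. For the cost part, the difference $(K')^\top M K' - K^\top M K$ is reorganised by completing the square as $(K'-K)^\top M K + K^\top M(K'-K) + (K'-K)^\top M(K'-K)$, whose first two terms supply the $MK$-contribution of $\cD_K(\theta)$ and whose last term supplies the remaining half of the $(K'-K)^\top(D^\top P D+M)(K'-K)$ quadratic. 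Collecting everything against the definition $\cD_K(\theta)_t = B_t^\top P_t + D_t^\top P_t(C_t+D_tK_t) + S_t + M_tK_t$ in \eqref{eq:D_K} verifies the identity.

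The main obstacle is purely the symmetric-matrix algebra in the second step: one must track transposes carefully and pair each half of the symmetrised expression $(K'-K)^\top \cD_K(\theta) + \cD_K(\theta)^\top(K'-K)$ against the corresponding $B^\top P$, $D^\top P(C+DK)$, $S$ and $MK$ contributions, and then confirm that the two quadratic pieces, one from the diffusion expansion and one from completing the square in $M$, combine into the single term $(K'-K)^\top(D^\top P D + M)(K'-K)$. No analytic difficulty arises; well-posedness of $\Delta P$ in $C([0,T];\sS^d)$ follows from that of $P$ and $P'$ under (H.\ref{assum:coefficident}).
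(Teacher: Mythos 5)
Your proposal is correct and takes essentially the same route as the paper's proof: subtract the two Lyapunov equations so that the operator associated with $(A+BK',\,C+DK')$ acts on $\Delta P$, then expand the leftover inhomogeneity via the identity $K_1^\top P K_1-K_2^\top P K_2=(K_1-K_2)^\top P K_2+K_2^\top P (K_1-K_2)+(K_1-K_2)^\top P (K_1-K_2)$ and collect terms against the definition of $\cD_K(\theta)$ in \eqref{eq:D_K}. The only blemish is a notational slip in your diffusion identity, whose left-hand side should read $(C+u)^\top P(C+u)-(C+v)^\top P(C+v)$ rather than $u^\top P u-v^\top P v$; your right-hand side and the subsequent bookkeeping are exactly what the paper does.
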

\begin{proof}
By  \eqref{eq:lyapunov_reg},
$\Delta P_T=0$ and 
 for a.e.~$t\in [0,T]$, 
\begin{align*}
 \begin{split}
(\tfrac{\d }{\d t}\Delta P)_t  
&+
 (A_t+B_tK'_t)^\top \Delta P_t + \Delta P_t^\top (A_t+B_tK'_t) 
+(C_t+D_tK'_t)^\top \Delta P_t (C_t+D_tK'_t) 
\\
&  +q_t(K'_t)-q_t(K_t)=0,
\end{split}
\end{align*}
where for all $K\in \sR^{k\t d}$,
\begin{align*}
q_t(K)&\coloneqq 
 (A_t+B_tK)^\top  P^\theta_t + ( P^\theta_t)^\top (A_t+B_tK) 
+(C_t+D_tK)^\top  P^\theta_t (C_t+D_tK) 
\\
&\q + S_t^\top K+K^\top S_t
+K^\top (R_t+\rho\bar{V}_t^{{-1}})K.
\end{align*}
Observe that for any $K_1,K_2\in \sR^{k\t d}$
and $P\in \sS^d$,
$$
K_1^\top P K_1-K_2^\top P K_2
=(K_1-K_2)^\top P K_2+K_2^\top P(K_1-K_2)
+(K_1-K_2)^\top P (K_1-K_2).
$$
Thus for a.e.~$t\in [0,T]$,
\begin{align*}
q_t(K'_t)-q_t(K_t)
&=
(K'_t-K_t)^\top
\left(B_t^\top P^\theta_t +D_t^\top P^\theta_t (C_t+D_tK_t)
+S_t+ (R_t+\rho\bar{V}_t^{{-1}})K_t
\right)
\\
&\q 
+
\left(B_t^\top P^\theta_t +D_t^\top P^\theta_t (C_t+D_tK_t)
+S_t+(R_t+\rho\bar{V}_t^{{-1}})K_t
\right)^\top (K'_t-K_t)
\\
&\q 
+(K'_t-K_t)^\top
(D^\top_t P^\theta_t D_t+R_t+\rho\bar{V}_t^{{-1}})
 (K'_t-K_t),
\end{align*}
which along with the definition of $\cD_K(\theta)_t$ leads to the desired identity.
\end{proof}

  Based on Lemma \ref{lemma:laypunov_difference}, we establish a uniform bound of  $(P^{\theta^n})_{n\in \sN}$ and $(K^n)_{n\in \sN}$. 

\begin{Proposition}
\label{prop:K_bdd}
 Suppose (H.\ref{assum:coefficident}) 
 and (H.\ref{assum:nondegeneracy})
hold.
For each  $\theta\in \Theta$,
let   $P^{\theta}\in C([0,T];\sS^d)$ satisfy  \eqref{eq:lyapunov_reg}.
Let $\theta^0\in \Theta$, 
  $\ol{\lambda}_{ 0}>0$ be
 such that $\ol{\lambda}_{0} I_k \succeq D^\top P^{\theta^0} D+R+\rho \bar{V}^{-1}$,
 and
for each 
$\tau>0$,  
let $(K^n)_{n\in \sN}\subset   \cB(0,T; \sR^{d\t k} )$ be defined in \eqref{eq:NPG}.
 Then
\begin{enumerate}[(1)]
\item
\label{item:P_monotone}
 for all 
$\tau \in (0,2/\ol{\lambda}_0]$ 
 and 
$n\in \sN_{ 0}$,  $P^{\theta^{n}}\succeq P^{\theta^{n+1}}\succeq P^\star$,
and 
  $
 \widetilde{\delta}I_k
 \preceq D^\top P^{\theta^n}D+ R + {\rho}\bar{V}^{{-1}}
 \preceq \ol{\lambda}_0 I_k
$, with $P^\star\in C([0,T];\sS^d)$ and $ \widetilde{\delta}>0$ in 
(H.\ref{assum:nondegeneracy});
\item
\label{item:K_bound}
there exists  $\widetilde{C}_{(\theta^0)}\ge 0$ such that 
for all  
$\tau \in (0,{1}/\ol{\lambda}_0]$
 and $n\in \sN_{  0}$, 
 $\|K^n\|_{L^2}\le \widetilde{C}_{(\theta^0)} $. 
\end{enumerate}
\end{Proposition}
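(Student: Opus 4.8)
The plan is to run a single induction on $n$ that simultaneously maintains $K^n\in L^2(0,T;\sR^{k\t d})$ (so that $P^{\theta^n}$ is well defined), the monotone chain $P^{\theta^0}\succeq\cdots\succeq P^{\theta^n}\succeq P^\star$, and the two-sided spectral bounds on $M^n\coloneqq D^\top P^{\theta^n}D+R+\rho\bar{V}^{-1}$. The engine is the comparison identity of Lemma \ref{lemma:laypunov_difference} together with a positivity (comparison) principle for the backward Lyapunov equation: if $\Delta P$ solves that equation with zero terminal datum and source $G$, then applying It\^o's formula to $s\mapsto Y_s^\top \Delta P_s Y_s$ along the matrix flow $\d Y_s=(A_s+B_sK'_s)Y_s\,\d s+(C_s+D_sK'_s)Y_s\,\d W_s$, $Y_t=I_d$ (with $K'$ the feedback in that equation's operator), gives $\Delta P_t=\sE[\int_t^T Y_s^\top G_s Y_s\,\d s]$, so that $G\succeq0$ forces $\Delta P\succeq0$ and $G\preceq0$ forces $\Delta P\preceq0$.

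First I would note that the lower bound is automatic for every $\theta\in\Theta$. Since $P^\star=P^{\theta^\star}$ (the Riccati equation \eqref{eq:riccati_star_ent} is exactly \eqref{eq:lyapunov_reg} evaluated at $K=K^\star$) and the first-order condition \eqref{eq:KV_star} makes $\cD_K(\theta^\star)=0$, Lemma \ref{lemma:laypunov_difference} applied with the pair $(\theta^\star,\theta)$ leaves only the source $(K-K^\star)^\top(D^\top P^\star D+R+\rho\bar{V}^{-1})(K-K^\star)\succeq0$ by (H.\ref{assum:nondegeneracy}); the comparison principle yields $P^\theta\succeq P^\star$, hence $M^n\succeq D^\top P^\star D+R+\rho\bar{V}^{-1}\succeq\widetilde{\delta} I_k$. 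For monotonicity and the upper bound (item (1)) I would then induct: assuming the chain holds up to index $n$, in particular $P^{\theta^n}\preceq P^{\theta^0}$ gives $M^n\preceq\ol{\lambda}_0 I_k$. Applying Lemma \ref{lemma:laypunov_difference} with $(\theta^n,\theta^{n+1})$ and substituting $K^{n+1}-K^n=-\tau\cD_K(\theta^n)$, the source collapses to $\cD_K(\theta^n)^\top(\tau^2 M^n-2\tau I_k)\cD_K(\theta^n)$, which is $\preceq0$ once $\tau M^n\preceq 2I_k$, i.e.~for $\tau\le 2/\ol{\lambda}_0$. The comparison principle delivers $P^{\theta^{n+1}}\preceq P^{\theta^n}$, and together with the lower bound this closes the induction and proves item (1).

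For the $L^2$ bound (item (2)) I would exploit that the mean update is an affine contraction pointwise in $t$. Writing $\cD_K(\theta^n)_t=b^n_t+M^n_t K^n_t$ with $b^n_t=B_t^\top P^{\theta^n}_t+D_t^\top P^{\theta^n}_t C_t+S_t$, the recursion reads $K^{n+1}_t=(I_k-\tau M^n_t)K^n_t-\tau b^n_t$. By item (1), $\widetilde{\delta} I_k\preceq M^n_t\preceq\ol{\lambda}_0 I_k$, so for $\tau\le1/\ol{\lambda}_0$ we have $0\preceq I_k-\tau M^n_t$ and $\|I_k-\tau M^n_t\|_2\le1-\tau\widetilde{\delta}$. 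The sandwich $P^\star\preceq P^{\theta^n}\preceq P^{\theta^0}$ bounds $\|P^{\theta^n}_t\|_2$ by the continuous, hence bounded, function $\max(\|P^\star_t\|_2,\|P^{\theta^0}_t\|_2)$, so $|b^n_t|$ is dominated by a fixed $\beta\in L^2(0,T)$ via (H.\ref{assum:coefficident}\ref{assum:integrability}). Iterating $|K^{n+1}_t|\le(1-\tau\widetilde{\delta})|K^n_t|+\tau\beta_t$ and summing the geometric series gives $|K^n_t|\le|K^0_t|+\beta_t/\widetilde{\delta}$, whence $\|K^n\|_{L^2}\le\|K^0\|_{L^2}+\|\beta\|_{L^2}/\widetilde{\delta}$, uniform in $n$ and independent of $\tau$; this defines the constant $\widetilde{C}_{(\theta^0)}$.

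The genuine difficulty is the apparent circularity: monotonicity at step $n$ needs $M^n\preceq\ol{\lambda}_0 I_k$, while that upper bound is itself a consequence of monotonicity. This is resolved by the single induction, whose hypothesis carries the whole chain $P^{\theta^0}\succeq\cdots\succeq P^{\theta^n}$, so that $M^n\preceq\ol{\lambda}_0 I_k$ is available exactly when the comparison step requires it. The membership $K^n\in L^2$ needed for $P^{\theta^n}$ to be well defined is folded into the same induction, since the contraction step produces $K^{n+1}\in L^2$ from $K^n\in L^2$.
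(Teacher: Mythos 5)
Your proposal is correct and takes essentially the same route as the paper: both apply Lemma \ref{lemma:laypunov_difference} with the update $K^{n+1}-K^n=-\tau\cD_K(\theta^n)$ substituted so that monotonicity reduces to the sign of the source $\cD_K(\theta^n)^\top(\tau^2 M^n-2\tau I_k)\cD_K(\theta^n)$, invoke a comparison principle for the backward Lyapunov equation (which you prove inline via the stochastic-flow representation, where the paper simply cites \cite[Lemma 7.3, p.~320]{yong1999stochastic}), obtain $P^{\theta}\succeq P^\star$ from $\cD_K(\theta^\star)=0$ and (H.\ref{assum:nondegeneracy}), run the same induction carrying the chain $P^{\theta^0}\succeq\cdots\succeq P^{\theta^n}\succeq P^\star$ to break the apparent circularity, and prove Item \ref{item:K_bound} by the same affine-contraction recursion for $K^n$. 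One remark in your favour: your contraction factor $1-\tau\widetilde{\delta}$ is the correct spectral bound for $\|I_k-\tau M^n_t\|_2$ given $\widetilde{\delta}I_k\preceq M^n\preceq \ol{\lambda}_0 I_k$ and $\tau\le 1/\ol{\lambda}_0$, whereas the paper's displayed inequality uses $1-\tau\ol{\lambda}_0$ (a harmless slip, since the ensuing geometric-series bound goes through with $\widetilde{\delta}$ in place of $\ol{\lambda}_0$), so your version quietly corrects the constant.
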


\begin{proof}
We write $P^n=P^{\theta^n}$ for notational simplicity. 
For each $n\in \sN$, applying 
 \eqref{eq:NPG} and 
Lemma \ref{lemma:laypunov_difference}
with $\theta'=\theta^n$ and $\theta=\theta^{n-1}$,
$\Delta P\coloneqq P^{n}-P^{n-1}\in C([0,T];\sS^d)$ satisfies
$\Delta P_T=0$, and 
for a.e.~$t\in [0,T]$,
\begin{align*}
 \begin{split}
&(\tfrac{\d }{\d t}\Delta P)_t  
 +
 (A_t+B_tK^{n+1}_t)^\top \Delta P_t + \Delta P_t^\top (A_t+B_tK^{n+1}_t) 
+(C_t+D_tK^{n+1}_t)^\top \Delta P_t (C_t+D_tK^{n+1}_t) 
\\
&\quad  
=
-
(K^{n+1}_t-K^n_t)^\top \cD_K(\theta^n)_t   
-
\cD_K(\theta^n)_t^\top (K^{n+1}_t-K^n_t)
\\
& 
\quad \quad
 -(K^{n+1}_t-K^n_t)^\top
(D^\top_t P^n_t D_t+R_t+\rho\bar{V}_t^{{-1}})
 (K^{n+1}_t-K^n_t) 
 \\
 &\quad  
=
2\tau 
\cD_K(\theta^n)_t^\top 
\left(
I_k- \tfrac{\tau}{2}(D^\top_t P^n_t D_t+R_t+\rho\bar{V}_t^{{-1}})
\right)
\cD_K(\theta^n)_t.
\end{split}
\end{align*}
Now suppose that $\tau \in (0,2/\ol{\lambda}_0]$, then 
$I_k- \tfrac{\tau}{2}(D^\top_t P^0_t D_t+R_t+\rho\bar{V}_t^{{-1}})\succeq 0$,
which 
implies that $P^{1}\preceq P^{0}$
(see e.g., \cite[Lemma 7.3, p.~320]{yong1999stochastic}), and 
hence 
$$
I_k- \tfrac{\tau}{2}(D^\top P^1D+R+\rho\bar{V}^{{-1}})
\succeq 
I_k- \tfrac{\tau}{2}(D^\top P^0D+R+\rho\bar{V}^{{-1}})\succeq 0.$$
An induction argument shows that $P^n\succeq P^{n+1}$ for all $n\in \sN_0$.
Moreover, observe from
\eqref{eq:riccati_star_ent} and  \eqref{eq:KV_star}
that $\cD_K(\theta^\star)=0$ and $P^\star=P^{\theta^\star}$.
By applying Lemma \ref{lemma:laypunov_difference}
with $\theta'=\theta^n$ and $\theta=\theta^\star$,
one can deduce from similar arguments that  $P^n\succeq P^{\theta^\star}$ for all $n\in \sN_0$.
Consequently,  by (H.\ref{assum:nondegeneracy}),
  $$
\ol{\lambda}_0 I_k \succeq D^\top P^0D+ R + {\rho}\bar{V}^{{-1}}
\succeq 
D^\top P^nD+ R + {\rho}\bar{V}^{{-1}}
\succeq D^\top P^\star D+ R + {\rho}\bar{V}^{{-1}}
\succeq \widetilde{\delta}I_k.
$$
This proves Item \ref{item:P_monotone}.

Item \ref{item:P_monotone} implies that 
there exists $\widetilde{C}_{(\theta^0)}>0$ such that
$\|P^n\|_{L^\infty}\le \widetilde{C}_{(\theta^0)}$ for all $n\in \sN_0$. 
Then for all $n\in \sN_0$, 
by  \eqref{eq:D_K} and \eqref{eq:NPG},
\begin{align*}
|K^{n+1}_t|&=
\left|
K^n_t-\tau \left(
(B_t^\top P^n_t+D_t^\top P^n_t C_t+S_t)
+
 (D_t^\top P^n_tD_t+ R_t + {\rho}\bar{V}_t^{{-1}})K^n_t
 \right)
 \right|
 \\
 &\le 
| I_k-
 \tau  (D_t^\top P^n_tD_t+ R_t + {\rho}\bar{V}_t^{{-1}})
 ||K^n_t|
 +\tau |B_t^\top P^n_t+D_t^\top P^n_t C_t+S_t|.
\end{align*}
Thus for all  $\tau \in (0,1/\ol{\lambda}_0]$ and $n\in \sN_0$,
\begin{align*}
\|K^{n+1}\|_{L^2}&\le 
(1-\tau \ol{\lambda}_0)
 \|K^n\|_{L^2}
 +\tau \|B^\top P^n+D^\top P^n C+S\|_{L^2}
 \\
 &\le \|K^0\|_{L^2}+
\sup_{n\in \sN_0}
\frac{1}{\ol{\lambda}_0}\|B^\top P^n+D^\top P^n C+S\|_{L^2} 
<\infty,
\end{align*}
where the last inequality follows from a straightforward induction argument.
\end{proof}

The next proposition proves     a uniform 
upper and lower 
bound of    $(V^n)_{n\in \sN}$. 

\begin{Proposition}
\label{prop:V_bound}
 Suppose (H.\ref{assum:coefficident}) 
and  (H.\ref{assum:nondegeneracy})
hold.
Let $\theta^0\in \Theta$, 
 and
for each 
$\tau>0$,  
let $(\theta^n)_{n\in \sN}\subset   \cB(0,T; \sR^{k\t d}\t \sS^k)$ be defined in \eqref{eq:NPG}.
Let  $\ol{\lambda}_{ 0}>0$ be
 such that $\ol{\lambda}_{0} I_k \succeq D^\top P^{\theta^0} D+R+\rho \bar{V}^{-1}$ 
with $ P^{\theta_0}\in C([0,T];\sS^d)$  
defined in 
\eqref{eq:lyapunov_reg},
let 
$\ul{\lambda}_V= 
 \min\left(
 \min_{t\in [0,T]}
 \lambda_{\min}( V^{0}_t), \frac{\rho}{\ol{\lambda}_0} \right)$,
 and 
 let
$\ol{\lambda}_V
=
\max\left(
 \max_{t\in [0,T]}
 \lambda_{\max}( V^{0}_t), \frac{\rho}{ \widetilde{\delta}} \right)
$. 
Then  
 for all 
$\tau \in (0,1/\ol{\lambda}_0]$
and $n\in \sN_0$, 
$\ul{\lambda}_V I_k
 \preceq V^n
 \preceq \ol{\lambda}_V I_k
$.

\end{Proposition}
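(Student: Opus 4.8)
The plan is to reduce the statement to a pointwise‑in‑$t$ claim about one step of the covariance recursion and then close an induction on $n$. First I would make the recursion explicit: writing $M^n_t := D_t^\top P^{\theta^n}_t D_t + R_t + \rho\bar V_t^{-1}$ and substituting \eqref{eq:D_V}–\eqref{eq:gradient_BW} into \eqref{eq:NPG}, a short computation gives
\[
\cD^{\rm bw}_V(\theta^n)_t = \tfrac12\big(M^n_t V^n_t + V^n_t M^n_t\big) - \rho I_k, \qquad V^{n+1}_t = V^n_t - \tfrac\tau2\big(M^n_t V^n_t + V^n_t M^n_t\big) + \tau\rho I_k .
\]
By Proposition \ref{prop:K_bdd}\ref{item:P_monotone}, for $\tau\in(0,2/\ol\lambda_0]$ and every $n$ one has $\widetilde\delta I_k\preceq M^n_t\preceq\ol\lambda_0 I_k$ for a.e.\ $t$, and this is the only information on $M^n_t$ I would use. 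Since $V^{n+1}_t$ depends only on $(V^n_t,M^n_t)$, the recursion decouples across $t$, so it suffices to fix $t$ and study the map $\Psi_M(V) := V - \tfrac\tau2(MV+VM) + \tau\rho I_k$ for a fixed symmetric $M$ with $\widetilde\delta I_k\preceq M\preceq\ol\lambda_0 I_k$ and $\tau\le 1/\ol\lambda_0$. The key lemma to establish is that $\Psi_M$ maps the Loewner interval $\{\ul\lambda_V I_k\preceq V\preceq\ol\lambda_V I_k\}$ into itself whenever $\ul\lambda_V\le\rho/\ol\lambda_0$ and $\ol\lambda_V\ge\rho/\widetilde\delta$; the proposition then follows by induction from the base case $\theta^0\in\Theta$.

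To prove the lemma I would set $N := I_k-\tau M$, which satisfies $0\preceq N\preceq(1-\tau\widetilde\delta)I_k$ because $\tau\le1/\ol\lambda_0$. For the lower bound, with $E := V-\ul\lambda_V I_k\succeq0$ one has the identity
\[
\Psi_M(V)-\ul\lambda_V I_k = \tfrac12\big(NE + EN\big) + \tau\big(\rho I_k - \ul\lambda_V M\big),
\]
where $\rho I_k-\ul\lambda_V M\succeq(\rho-\ul\lambda_V\ol\lambda_0)I_k\succeq0$ by the choice of $\ul\lambda_V$; the upper bound has the symmetric decomposition with $E_+ := \ol\lambda_V I_k - V\succeq0$ and remainder $\tau(\ol\lambda_V M-\rho I_k)\succeq(\ol\lambda_V\widetilde\delta-\rho)I_k\succeq0$. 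The scalar intuition guiding the choice of constants is that these recursions are affine contractions toward the fixed point $\rho M^{-1}$, whose eigenvalues all lie in $[\rho/\ol\lambda_0,\rho/\widetilde\delta]$, so $\ul\lambda_V$ and $\ol\lambda_V$ are exactly chosen to straddle every fixed point; in the case where $M$ and $V$ commute, simultaneous diagonalisation reduces each eigendirection to the scalar update $\mu\mapsto(1-\tau\lambda)\mu+\tau\rho$, which preserves the interval by elementary estimates.

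The hard part will be the symmetric (anticommutator) product $\tfrac12(NE+EN)$: for positive semidefinite $N,E$ that fail to commute this term is in general \emph{indefinite}, so it cannot simply be absorbed into the positive remainder. This is precisely where the non‑commutativity of the covariance iterate $V^n$ and the weight $M^n$ enters, and it is the feature entirely absent from the scalar/commuting analysis. I would attempt to control it by diagonalising $M$ (equivalently $N$) at the fixed time $t$, writing $N=U\,\mathrm{diag}(n_1,\dots,n_k)\,U^\top$ with $n_i=1-\tau\lambda_i\in[0,1-\tau\widetilde\delta]$, so that in this basis $\tfrac12(NE+EN)$ has entries $\tfrac12(n_i+n_j)\widetilde E_{ij}$ and the negative contribution of the off‑diagonal coupling can be weighed against the diagonal positive part furnished by $\tau(\rho I_k-\ul\lambda_V M)$. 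Carrying this estimate out quantitatively, so that the positive remainder dominates the indefinite anticommutator uniformly over $\tau\in(0,1/\ol\lambda_0]$, is the delicate point of the argument, and it is exactly what makes the Bures--Wasserstein normalisation essential: only this normalisation produces the $\tfrac12(MV+VM)$ structure whose fixed point $\rho M^{-1}$ has its spectrum pinned to $[\rho/\ol\lambda_0,\rho/\widetilde\delta]$, yielding the a‑priori interval without an explicit projection.
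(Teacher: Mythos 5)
Your reduction coincides with the paper's up to the decisive step: you derive the identical one-step recursion
\[
V^{n+1}_t=\tfrac12\big(I_k-\tau M^n_t\big)V^n_t+\tfrac12\,V^n_t\big(I_k-\tau M^n_t\big)+\rho\tau I_k,
\qquad M^n_t=D_t^\top P^{\theta^n}_t D_t+R_t+\rho\bar{V}_t^{-1},
\]
you import $\widetilde{\delta} I_k\preceq M^n\preceq \ol{\lambda}_0 I_k$ from Proposition \ref{prop:K_bdd}, and you decouple the problem pointwise in $t$ --- all exactly as in the paper's proof. But your argument is not complete: after the (correct) identity $\Psi_M(V)-\ul{\lambda}_V I_k=\tfrac12(NE+EN)+\tau(\rho I_k-\ul{\lambda}_V M)$ with $N=I_k-\tau M$, $E=V-\ul{\lambda}_V I_k$, the entire content of the proposition is the domination of the possibly indefinite anticommutator by the positive remainder, and you explicitly leave that estimate open, offering only the intention to weigh off-diagonal against diagonal entries after diagonalising $M$. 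Identifying the delicate point is not proving it; as submitted, the induction step for the lower bound is unproved, so there is a genuine gap at the only nontrivial place. (You also omit the closing telescoping computation $v^{n+1}\ge(1-\tau\ol{\lambda}_0)v^n+\rho\tau$, resp.\ $\le(1-\tau\widetilde{\delta})v^n+\rho\tau$, which is where the specific constants $\ul{\lambda}_V=\min(\min_t\lambda_{\min}(V^0_t),\rho/\ol{\lambda}_0)$ and $\ol{\lambda}_V=\max(\max_t\lambda_{\max}(V^0_t),\rho/\widetilde{\delta})$ actually come from.)

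For comparison, the paper disposes of this step spectrally rather than by Loewner-interval invariance: it asserts $\lambda_{\min}(V^{n+1}_t)\ge\lambda_{\min}(I_k-\tau M^n_t)\,\lambda_{\min}(V^n_t)+\rho\tau$ and $\lambda_{\max}(V^{n+1}_t)\le\lambda_{\max}(I_k-\tau M^n_t)\,\lambda_{\max}(V^n_t)+\rho\tau$ and then runs the scalar induction. You should be aware that your non-commutativity worry is substantive and bears on the $\lambda_{\min}$ half of that step too: the $\lambda_{\max}$ inequality is sound, since $\lambda_{\max}\big(\tfrac12(NV+VN)\big)\le\|NV\|_2\le\lambda_{\max}(N)\lambda_{\max}(V)$ by submultiplicativity of the spectral norm (this also rescues your upper-bound decomposition), but the lower inequality $\lambda_{\min}\big(\tfrac12(NV+VN)\big)\ge\lambda_{\min}(N)\lambda_{\min}(V)$ is the commuting-case bound and fails for general $N,V\succeq0$: with $N=\mathrm{diag}(0.9,0.1)$ and $V=\left(\begin{smallmatrix}1+\delta&1\\1&1+\delta\end{smallmatrix}\right)$, the matrix $\tfrac12(NV+VN)$ has an eigenvalue near $-0.14$, whereas $\lambda_{\min}(N)\lambda_{\min}(V)=0.1\delta>0$. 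The same example shows that your diagonalisation programme cannot be carried out using only the hypotheses you allow yourself ($\widetilde{\delta} I_k\preceq M\preceq\ol{\lambda}_0 I_k$, $\tau\le1/\ol{\lambda}_0$, and Loewner bounds on $V$): when $\rho\tau$ is small and $V$ is ill-conditioned, the remainder $\tau(\rho I_k-\ul{\lambda}_V M)$ does not dominate the off-diagonal coupling of $\tfrac12(NE+EN)$. Closing your key lemma therefore requires more than the stated inputs --- e.g.\ additional structure tying $M^n$ to $V^n$ along the iterates, or a stepsize restriction involving the relevant condition numbers --- and your write-up should say so explicitly rather than presenting the interval-invariance lemma as attainable by the sketched eigenbasis bookkeeping.
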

 
 \begin{proof}
 For each $n\in \sN_0$,
 let 
 $M^n=D^\top   P^{\theta^n}  D   +R +\rho \bar{V}^{-1}$.
 By \eqref{eq:D_V}, for each $n\in \sN_0$ and a.e.~$t\in [0,T]$, 
 \begin{align*}
 V^{n+1}_t
 &=V^n_t- {\tau}\left(
\frac{1}{2} \left(M^n_t-\rho(V_t^n)^{-1}\right)
  V_t^n+V^n_t \frac{1}{2} \left(M^n_t-\rho(V_t^n)^{-1}\right)\right)
  \\
  &
  =\frac{1}{2}\left(I_k- {\tau} M^n_t\right)V^n_t
  +\frac{1}{2} V^n_t\left( I_k- {\tau}   M^n_t\right)
  + {\rho\tau} I_k.
 \end{align*}
Let  $\tau \in (0,1/\ol{\lambda}_0]$. 
By
 Proposition
\ref{prop:K_bdd} Item 
\ref{item:P_monotone}, 
for all $n\in \sN_0$, 
$
\widetilde{\delta}I_k
 \preceq M^n
 \preceq \ol{\lambda}_0 I_k
$, 
and hence
$
0\preceq (1- {\tau \ol{\lambda}_0 }  )I_k
\preceq
I_k- {\tau}   M^n
 \preceq (1-  {\tau \widetilde{\delta}}  )I_k
$. Thus for all $n\in \sN_0$ and a.e.~$t\in [0,T]$, 
 \begin{align*}
\lambda_{\min} (V^{n+1}_t)
 &  \ge  
 \lambda_{\min} \left(I_k- {\tau}   M^n_t\right)\lambda_{\min} (V^n_t)
  + {\rho\tau}   \ge 
   \left(1-  {\tau \ol{\lambda}_0 } \right)\lambda_{\min} (V^n_t)
  + {\rho\tau}.
 \end{align*}
 Setting $v^n_t=  \lambda_{\min} (V^{n}_t)$ for all $n\in \sN_0$. An induction argument shows that 
 $$
v^n_t\ge 
 \left(1-  {\tau \ol{\lambda}_0 } \right)^n 
v^0_t
 + {\rho\tau} \sum_{i=0}^{n-1}  \left(1-  {\tau \ol{\lambda}_0 } \right)^i
 =\left( v^0_t
-\frac{\rho}{\ol{\lambda}_0}\right)
 \left(1-  {\tau \ol{\lambda}_0 } \right)^n 
+\frac{\rho}{\ol{\lambda}_0}
\ge \min\left(v^0_t, \frac{\rho}{\ol{\lambda}_0} \right).
 $$
Similarly, 
    for all $n\in \sN_0$ and a.e.~$t\in [0,T]$,
    \begin{align*}
  \lambda_{\max}( V^{n+1}_t)
 &\le 
 \lambda_{\max} \left(I_k- {\tau}   M^n_t\right) \lambda_{\max} (V^n_t)
  + {\rho\tau} 
  \le 
  \left(1-  {\tau \widetilde{\delta}}  \right)
  \lambda_{\max} (V^n_t)+ {\rho\tau},
 \end{align*}
 which implies that 
 $  \lambda_{\max}( V^{n}_t)
\le \max\left( \lambda_{\max}( V^{0}_t), \frac{\rho}{ \widetilde{\delta}} \right)$. 
 \end{proof}

The following lemma  establishes  an upper and lower bounds of the state covariance matrices for any $\theta\in \Theta$,
which is crucial for the convergence analysis 
of \eqref{eq:NPG}.
 
\begin{Lemma}
\label{lemma:Sigma_bdd}
 Suppose (H.\ref{assum:coefficident}) 
 and (H.\ref{assum:nondegeneracy})
hold.
For each 
 $\theta\in \Theta$,
  let 
   $\Sigma^\theta \in C([0,T];\ol{\sS^d_+})$ satisfy \eqref{eq:lq_sde_K_reg_cov}.
Then there exists  $\widetilde{C}>0$ such that for all  
$\theta\in \Theta$,
$$
 \lambda_{\min }(\mathbb{E}[\xi_0\xi^\top_0])\exp\left(
-  \widetilde{C}(1+\|K\|_{L^2}^2)
  \right)I_d
\preceq
\Sigma^\theta 
\preceq
\widetilde{C}
 \left(|\Sigma_0|
 +\|V\|_{L^1}
 \right)
 \exp\left( \widetilde{C}(1+\|K\|_{L^2}^2)\right)I_d.
$$

  \end{Lemma}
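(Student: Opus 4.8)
The plan is to treat \eqref{eq:lq_sde_K_reg_cov} as a forward linear matrix ODE and to establish the two bounds separately by Gronwall-type arguments, choosing $\widetilde{C}$ to depend only on the $L^1$--$L^\infty$ norms of $A,B,C,D$ from (H.\ref{assum:coefficident}\ref{assum:integrability}), so that it is uniform over $\theta\in\Theta$.

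For the upper bound I would track the scalar quantity $\tr(\Sigma^\theta_t)=\sE[|X^\theta_t|^2]$. Taking traces in \eqref{eq:lq_sde_K_reg_cov} and using $|\tr(M\Sigma)|\le\|M\|_2\tr(\Sigma)$ and $\tr(N\Sigma N^\top)\le\|N\|_2^2\tr(\Sigma)$ for $\Sigma\succeq0$, one gets
$$\tfrac{\d}{\d t}\tr(\Sigma^\theta_t)\le a_t\,\tr(\Sigma^\theta_t)+b_t,$$
with $a_t=2\|A_t+B_tK_t\|_2+\|C_t+D_tK_t\|_2^2$ and $b_t=\|D_t\|_2^2\tr(V_t)$. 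Gronwall's inequality then gives $\tr(\Sigma^\theta_t)\le\big(\tr(\Sigma_0)+\int_0^T b_s\,\d s\big)\exp\big(\int_0^T a_s\,\d s\big)$. Splitting $\|A+BK\|_2\le\|A\|_2+\|B\|_2\|K\|_2$ and $\|C+DK\|_2^2\le2\|C\|_2^2+2\|D\|_2^2\|K\|_2^2$ and integrating, Cauchy--Schwarz together with $D\in L^\infty$ yields $\int_0^T a_s\,\d s\le\widetilde{C}(1+\|K\|_{L^2}^2)$ (absorbing the linear term $\|B\|_{L^2}\|K\|_{L^2}$ into the square), while $\int_0^T b_s\,\d s\le\widetilde{C}\|V\|_{L^1}$. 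Since $\Sigma^\theta_t\preceq\tr(\Sigma^\theta_t)I_d$ and $\tr(\Sigma_0)\le d|\Sigma_0|$, this is exactly the claimed upper bound.

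For the lower bound I may assume $\Sigma_0=\sE[\xi_0\xi_0^\top]\succ0$ (otherwise the right-hand side is $0$ and there is nothing to prove). First I would discard the beneficial forcing: letting $\Psi^\theta$ solve the homogeneous version of \eqref{eq:lq_sde_K_reg_cov} (i.e.\ with the term $D_tV_tD_t^\top$ removed) and $\Psi^\theta_0=\Sigma_0$, the difference $\Sigma^\theta-\Psi^\theta$ solves the same Lyapunov equation with zero initial data and nonnegative forcing $D_tV_tD_t^\top\succeq0$; positivity of the Lyapunov flow then gives $\Sigma^\theta_t\succeq\Psi^\theta_t$. Since $\Psi^\theta_t=\sE[\Phi_t\Sigma_0\Phi_t^\top]$ for the a.s.\ invertible fundamental matrix $\Phi$ of $\d Y_t=(A_t+B_tK_t)Y_t\,\d t+(C_t+D_tK_t)Y_t\,\d W_t$, we have $\Psi^\theta_t\succ0$, so $\Lambda_t\coloneqq(\Psi^\theta_t)^{-1}$ is well defined. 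Differentiating $\Lambda$ and substituting the homogeneous equation,
$$\tfrac{\d}{\d t}\Lambda_t=-\Lambda_t(A_t+B_tK_t)-(A_t+B_tK_t)^\top\Lambda_t-\Lambda_t(C_t+D_tK_t)\Psi^\theta_t(C_t+D_tK_t)^\top\Lambda_t,$$
and the last term is $\preceq0$. Dropping it and taking traces gives $\tfrac{\d}{\d t}\tr(\Lambda_t)\le2\|A_t+B_tK_t\|_2\tr(\Lambda_t)$, so by Gronwall $\tr((\Psi^\theta_t)^{-1})\le\tr(\Sigma_0^{-1})\exp\big(2\int_0^T\|A_s+B_sK_s\|_2\,\d s\big)\le\frac{d}{\lambda_{\min}(\Sigma_0)}\exp(\widetilde{C}(1+\|K\|_{L^2}^2))$. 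Since $\lambda_{\min}(\Psi^\theta_t)=1/\lambda_{\max}((\Psi^\theta_t)^{-1})\ge1/\tr((\Psi^\theta_t)^{-1})$, this lower bounds $\Psi^\theta_t$, hence $\Sigma^\theta_t$, by a multiple of $I_d$ of the required form (absorbing the harmless factor $1/d$ into the exponential).

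The delicate points are all in the lower bound. The key structural observation — which is exactly what lets us handle a degenerate/controlled diffusion coefficient — is that both the multiplicative-noise contribution $(C+DK)\Psi^\theta(C+DK)^\top$ (through its effect on $\tfrac{\d}{\d t}\Lambda$) and the additive control noise $D V D^\top$ only help the lower bound and can be discarded, so that the exponent depends on $A+BK$ alone and never on $C,D$ or $V$. The one thing requiring care is the invertibility $\Psi^\theta_t\succ0$ needed to differentiate $(\Psi^\theta_t)^{-1}$, which I would secure through the fundamental-matrix representation above, where a.s.\ invertibility of $\Phi$ is a standard property of linear SDEs. The remaining arithmetic — turning the $L^1$/$L^2$/$L^\infty$ integrability of the coefficients into the stated factor $\exp(\widetilde{C}(1+\|K\|_{L^2}^2))$ with a $\theta$-independent $\widetilde{C}$ — is routine via Cauchy--Schwarz.
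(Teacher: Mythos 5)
Your proof is correct, and for the upper bound it is essentially the paper's argument: the paper runs Gronwall on $\|\Sigma^\theta_t\|_2$ where you run it on $\tr(\Sigma^\theta_t)$, a cosmetic difference, with the same splitting of $\|A+BK\|_{L^1}$ and $\|C+DK\|_{L^2}^2$ into $\widetilde{C}(1+\|K\|_{L^2}^2)$ via Cauchy--Schwarz. For the lower bound, however, you take a genuinely different route. The paper discards \emph{both} nonnegative terms $(C_t+D_tK_t)\Sigma_t(C_t+D_tK_t)^\top$ and $D_tV_tD_t^\top$ at once, so its comparison object $\widetilde{\Sigma}$ solves the purely deterministic flow $(\tfrac{\d}{\d t}\widetilde{\Sigma})_t=\widetilde{A}_t\widetilde{\Sigma}_t+\widetilde{\Sigma}_t\widetilde{A}_t^\top$ with $\widetilde{A}=A+BK$; this admits the explicit factorisation $\widetilde{\Sigma}_t=\Psi_t^\top\sE[\xi_0\xi_0^\top]\Psi_t$ with a deterministic fundamental matrix, so positivity is immediate and the estimate reduces to the elementary Gronwall bound $\|\Psi_t^{-1}\|_2\le\exp(\|\widetilde{A}\|_{L^1})$ for the ODE $\d\Psi_t^{-1}=-\widetilde{A}_t^\top\Psi_t^{-1}\,\d t$. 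You instead keep the multiplicative-noise term in the comparison equation and control $\tr\big((\Psi^\theta_t)^{-1}\big)$ by differentiating the inverse and dropping the sign-definite quadratic term $-\Lambda_t(C_t+D_tK_t)\Psi^\theta_t(C_t+D_tK_t)^\top\Lambda_t$. Both arguments yield an exponent depending only on $\|A+BK\|_{L^1}$ and never on $C$, $D$ or $V$, which is the structurally important point (the bound must not degrade with the possibly degenerate, controlled noise), but your version needs two extra standard facts that the paper's choice sidesteps: positivity of the matrix flow containing the $\widetilde{C}\,\cdot\,\widetilde{C}^\top$ term, and a.s.\ invertibility of the stochastic fundamental matrix $\Phi$ of a linear SDE with merely $L^1/L^2$ time-dependent coefficients --- you correctly secure both through the representation $\Psi^\theta_t=\sE[\Phi_t\Sigma_0\Phi_t^\top]$, and the invertibility of $\Phi$ under these integrability conditions is covered by the same well-posedness machinery as the paper's Lemma \ref{lemma:sde_wp}. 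What your route buys is a slightly sharper intermediate comparison, $\Sigma^\theta\succeq\Psi^\theta\succeq\widetilde{\Sigma}$, since the retained term only helps; this extra strength is not needed for the stated bound, and the paper's version is the more elementary of the two. Your remaining bookkeeping --- the trivial case $\lambda_{\min}(\sE[\xi_0\xi_0^\top])=0$, absorbing $1/d$ and $\tr(\Sigma_0^{-1})\le d/\lambda_{\min}(\Sigma_0)$ into the exponential by enlarging $\widetilde{C}$, and passing from $\tr(\Lambda_t)$ to $\lambda_{\min}(\Psi^\theta_t)\ge 1/\tr(\Lambda_t)$ --- is all sound.
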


\begin{proof}
 Let $\theta\in \Theta $ be fixed.
 We omit the superscript of $\Sigma^\theta$ to simplify the notation. 
To estimate $\lambda_{\max}(\Sigma_t)$,
by \eqref{eq:lq_sde_K_reg_cov},
for all $t\in [0,T]$,
\begin{align*}
\|\Sigma_t\|_2\le \|\Sigma_0\|_2+
\int_0^t
\left(
(2 \|\widetilde{A}_s\|_2+
 \|\widetilde{C}_s\|^2_2)\|\Sigma_s\|_2
+    \|D_s\|^2_2 \|V_s\|_2
 \right)\, \d s,
\end{align*}
where 
$\widetilde{A}_t= A_t+B_tK_t$ and 
$\widetilde{C}_t =C_t+D_tK_t$.
Then by 
 (H.\ref{assum:coefficident}\ref{assum:integrability}) 
 and Gronwall's inequality,
 $\|\Sigma\|_{L^\infty}\le  \widetilde{C}
 \left(|\Sigma_0|
 +\|V\|_{L^1}
 \right)
 \exp\left( \widetilde{C}(1+\|K\|_{L^2}^2)\right)$ for some $\tilde{C}_1>0$.
 
 Now we   obtain a lower bound of $\lambda_{\min}(\Sigma_t)$.
As  $ (C+D K )\Sigma  (C +D K )^\top
+D  V D^\top
 \succeq 0$,
by \eqref{eq:lq_sde_K_reg_cov}, 
$\Sigma\succeq \widetilde{\Sigma}$, where 
$\widetilde{\Sigma}\in C([0,T]; \ol{\sS^d_{+}})$ satisfies 
 for a.e.~$t\in [0,T]$,
 \begin{align}
	\begin{split}
			(\tfrac{\d }{\d t}{\Sigma})_t=& (A_t+B_tK_t)\Sigma_t+ \Sigma_t(A_t+B_tK_t)^\top
			; \q 
		\Sigma_0=\mathbb{E}[\xi_0\xi^\top_0].
	\end{split}
\end{align}
Note that 
  for all $t\in [0,T]$,
$
\widetilde{\Sigma}_t =  \Psi_t^\top\mathbb{E}[\xi_0\xi^\top_0] \Psi_t$,
where 
  $\Psi\in C([0,T];\sR^{d\t d})$ 
satisfies $\Psi_0= I_d$ and for a.e.~$t\in [0,T]$, 
 $\d \Psi_t = \Psi_t \widetilde{A}_t^\top\, \d t$,
 with 
  $\widetilde{A} = A+B K\in L^1(0,T;\sR^{d\t d})$.
  For each $t\in [0,T]$, 
let $x_t\in \sR^d$ be such that $|x_t |=1$ and 
 $\lambda_{\min }(\widetilde{\Sigma}_t)=x_t^\top \widetilde{\Sigma}_tx_t$,
 and let $y_t=\Psi_t   x_t$.
 Then
 $$
 \lambda_{\min }( {\Sigma}_t)\ge 
 \lambda_{\min }(\widetilde{\Sigma}_t)
 =x_t^\top\left( (\Psi_t)^\top \mathbb{E}[\xi_0\xi^\top_0] \Psi_t\right)x_t
 =\frac{y_t^\top   \mathbb{E}[\xi_0\xi^\top_0] y_t}{|y_t|^2}|y_t|^2
 \ge  \frac{\lambda_{\min }(\mathbb{E}[\xi_0\xi^\top_0])}{\left\|\Psi_t^{-1}\right\|^{2}_{2}},
 $$
 where the last inequality uses 
 $1= |x_t|\le \|(\Psi_t)^{-1} \|_{2}|y_t|$,
 with the spectral norm $ \| \cdot \|_{2}$.
Observe that  
$\Psi^{-1}\in C([0,T];\sR^{d\t d})$ 
be such that $\Psi_0^{-1}= I_d$ and for a.e.~$t\in [0,T]$, 
 $\d \Psi_t^{-1} = - \widetilde{A}_t^\top \Psi_t^{-1}\, \d t$. 
Hence for all $t\in [0,T]$,
\begin{align*}
\| \Psi_t^{-1} \|_{2}\le 1+\int_0^t
\|\widetilde{A}_s\|_{2}\|\Psi_s^{-1}\|_{2}\, \d s 
\le 1+\int_0^t
|\widetilde{A}_s| \|\Psi_s^{-1}\|_{2}\, \d s, 
\end{align*}
which along with Gronwall's inequality shows that 
$ \| \Psi_t^{-1} \|_{L^\infty}
\le \exp\left( \|\widetilde{A}\|_{L^1} \right)$. 
Consequently, 
$ \inf_{t\in [0,T]}\lambda_{\min }( {\Sigma}_t)
 \ge  \lambda_{\min }(\mathbb{E}[\xi_0\xi^\top_0])\exp\left(-2 \|\widetilde{A}\|_{L^1} \right)$,
 which along with 
  (H.\ref{assum:coefficident}\ref{assum:integrability}) 
leads to the desired lower bound of $\lambda_{\min }( {\Sigma}_t)$.
\end{proof}

A direct consequence 
of Proposition \ref{prop:V_bound} and Lemma \ref{lemma:Sigma_bdd}
are the following  uniform bounds of the state covariance matrices along the iterates $(\theta^n)_{n\in \sN}$ generated by  \eqref{eq:NPG}.

\begin{Proposition}
\label{prop:Sigma_bdd}
 Suppose (H.\ref{assum:coefficident}) 
 and (H.\ref{assum:nondegeneracy})
hold, 
 and $\sE[\xi_0\xi_0^\top]\succ0$.
For each 
 $\theta\in \Theta$,
let   $P^{\theta}\in C([0,T];\sS^d)$ satisfy  \eqref{eq:lyapunov_reg},
and  let 
   $\Sigma^\theta \in C([0,T];\ol{\sS^d_+})$ satisfy \eqref{eq:lq_sde_K_reg_cov}.
Let   $\theta^0\in \Theta$,  
let 
 $\ol{\lambda}_{ 0}>0$ be
such that $\ol{\lambda}_{0} I_k \succeq D^\top P^{\theta^0} D+R+\rho \bar{V}^{-1}$,
and 
for  each 
$\tau\in (0,{1}/\ol{\lambda}_0]$,
let $(\theta^n)_{n\in \sN}\subset   \Theta $ be defined in \eqref{eq:NPG}.
Then
there exists  $\ol{\lambda}_X,\ul{\lambda}_X> 0$,
depending on $\theta_0$,
 such that 
for all $\tau \in (0,{1}/\ol{\lambda}_0]$ and $n\in \sN_{  0}$, 
$\ul{\lambda}_X I_d\preceq \Sigma^{\theta^n}\preceq \ol{\lambda}_X I_d$.
\end{Proposition}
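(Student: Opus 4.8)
The plan is to combine the three preceding results, as the statement indicates this is essentially an assembly step: the uniform $L^2$-bound on the means $(K^n)_{n\in\sN_0}$ from Proposition \ref{prop:K_bdd}, the uniform two-sided bound on the covariances $(V^n)_{n\in\sN_0}$ from Proposition \ref{prop:V_bound}, and the explicit two-sided estimate of $\Sigma^\theta$ in terms of $\|K\|_{L^2}$ and $\|V\|_{L^1}$ furnished by Lemma \ref{lemma:Sigma_bdd}. The key structural observation is that the bounds in Lemma \ref{lemma:Sigma_bdd} depend on $\theta=(K,V)$ \emph{only} through the two scalar quantities $\|K\|_{L^2}$ and $\|V\|_{L^1}$ (the matrix $\mathbb{E}[\xi_0\xi_0^\top]$ being fixed data), and both of these are controlled uniformly in $n$ and in $\tau\in(0,1/\ol{\lambda}_0]$ along the iterates generated by \eqref{eq:NPG}.

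First I would fix $\ol{\lambda}_0>0$ as in the statement and take $\tau\in(0,1/\ol{\lambda}_0]$. By Proposition \ref{prop:K_bdd}(2) there is a constant $\widetilde{C}_{(\theta^0)}$, independent of $n$ and $\tau$, with $\|K^n\|_{L^2}\le \widetilde{C}_{(\theta^0)}$ for all $n\in\sN_0$. By Proposition \ref{prop:V_bound} there are $\ul{\lambda}_V,\ol{\lambda}_V>0$, again independent of $n$ and $\tau$, with $\ul{\lambda}_V I_k\preceq V^n_t\preceq \ol{\lambda}_V I_k$ for a.e.\ $t\in[0,T]$ and all $n$; these two bounds together also confirm $\theta^n\in\Theta$, as presupposed in the statement. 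Since each $V^n_t$ is symmetric positive semidefinite with all eigenvalues at most $\ol{\lambda}_V$, the Frobenius norm satisfies $|V^n_t|\le\sqrt{k}\,\ol{\lambda}_V$, whence $\|V^n\|_{L^1}\le \sqrt{k}\,\ol{\lambda}_V\,T$ uniformly in $n$ and $\tau$.

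Next I would apply Lemma \ref{lemma:Sigma_bdd} with $\theta=\theta^n$ and substitute these uniform bounds. Because $\mathbb{E}[\xi_0\xi_0^\top]\succ0$ by hypothesis, $\lambda_{\min}(\mathbb{E}[\xi_0\xi_0^\top])>0$, and the lower estimate of the lemma gives, with
\[
\ul{\lambda}_X\coloneqq \lambda_{\min}(\mathbb{E}[\xi_0\xi_0^\top])\,\exp\!\big(-\widetilde{C}(1+\widetilde{C}_{(\theta^0)}^2)\big)>0,
\]
that $\Sigma^{\theta^n}\succeq \ul{\lambda}_X I_d$; similarly the upper estimate, together with $\|V^n\|_{L^1}\le \sqrt{k}\,\ol{\lambda}_V T$, gives, with
\[
\ol{\lambda}_X\coloneqq \widetilde{C}\big(|\Sigma_0|+\sqrt{k}\,\ol{\lambda}_V T\big)\,\exp\!\big(\widetilde{C}(1+\widetilde{C}_{(\theta^0)}^2)\big),
\]
that $\Sigma^{\theta^n}\preceq \ol{\lambda}_X I_d$. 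Both constants are strictly positive, depend only on $\theta^0$ and the fixed data of (H.\ref{assum:coefficident})--(H.\ref{assum:nondegeneracy}), and are independent of $n$ and of $\tau\in(0,1/\ol{\lambda}_0]$, which is precisely the claimed uniform bound.

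I do not anticipate a genuine obstacle, since all the analytic work has been front-loaded into Propositions \ref{prop:K_bdd}--\ref{prop:V_bound} and Lemma \ref{lemma:Sigma_bdd}. The single point deserving care is that the strict positivity of the lower bound $\ul{\lambda}_X$ relies essentially on the nondegeneracy hypothesis $\mathbb{E}[\xi_0\xi_0^\top]\succ0$: without it the lower estimate of Lemma \ref{lemma:Sigma_bdd} degenerates to the trivial bound $\Sigma^{\theta^n}\succeq 0$. This positive-definiteness of the state covariance along the iterates is exactly the ingredient needed to later invoke the {\L}ojasiewicz inequality of Proposition \ref{prop:Lojaiewicz}, in which $\Sigma^{\theta^\star}$ and $\Sigma^{\theta^n}$ appear as weights.
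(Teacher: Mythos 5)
Your proposal is correct and follows exactly the paper's argument: the paper's proof likewise just invokes Proposition \ref{prop:K_bdd} for the uniform $L^2$-bound on $(K^n)_n$, Proposition \ref{prop:V_bound} for the uniform two-sided bound on $(V^n)_n$, and then concludes via the two-sided estimate of Lemma \ref{lemma:Sigma_bdd}. Your version merely makes explicit the constants and the step $\|V^n\|_{L^1}\le \sqrt{k}\,\ol{\lambda}_V T$ that the paper leaves implicit, which is fine.
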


\begin{proof}
By Proposition \ref{prop:K_bdd},
for all  $\tau \in (0,{1}/\ol{\lambda}_0]$,
$\sup_{n\in \sN_0}\|K^n\|_{L^2}\le    \widetilde{C}_{(\theta^0)} $
for some  $\widetilde{C}_{(\theta^0)}>0$.
The uniform lower and upper bounds of 
$(\Sigma^{\theta^n})_{n\in \sN_0}$ 
follow from 
Proposition \ref{prop:V_bound}
and 
Lemma \ref{lemma:Sigma_bdd}.
\end{proof}

\subsection{Proof of Theorem \ref{thm:linear_conv}}
\label{sec:proof_linear_conv}


The following proposition compares 
the value functions of  two consecutive iterates.

\begin{Proposition}
\label{prop:local_smooth}
 Suppose (H.\ref{assum:coefficident}) 
 and (H.\ref{assum:nondegeneracy})
 hold,
 and $\sE[\xi_0\xi_0^\top]\succ 0$.
Let  
 $\theta^0 \in \Theta$,
 and 
  $\ol{\lambda}_{ 0}>0$ be 
 such that $\ol{\lambda}_{0} I_k \succeq D^\top P^{\theta^0} D+R+\rho \bar{V}^{-1}$
 with 
$ P^{\theta_0}\in C([0,T];\sS^d)$ defined in  \eqref{eq:lyapunov_reg}.
For  each 
$\tau \in (0,1/\ol{\lambda}_0]$,
let $(\theta^n)_{n\in \sN}\subset \Theta$ be defined in \eqref{eq:NPG},
 let 
$\ul{\lambda}_V, \ol{\lambda}_V>0$ 
be 
such that 
$\ul{\lambda}_V I_k
 \preceq V^n
 \preceq \ol{\lambda}_V I_k
$ for all $n\in \sN_0$
(cf.~Proposition   \ref{prop:V_bound}),
and
let 
$\ul{\lambda}_X, \ol{\lambda}_X>0$ 
be such that  $\ul{\lambda}_X I_k
 \preceq \Sigma^{\theta^n}
 \preceq \ol{\lambda}_X I_k
$ for all $n\in \sN_0$
(cf.~Proposition   \ref{prop:Sigma_bdd}). 
Then
for all $\tau \in 
(0,  {1}/{\ol{\lambda}_0}]$
and $n\in \sN_0$, 
\begin{align*}
\cC(\theta^{n+1})-\cC(\theta^n)
  \le 
-\tau \int_0^T 
\bigg(
 \left( \ul{\lambda}_{X}-  
\frac{\tau }{2} \ol{\lambda}_{0}  \ol{\lambda}_{X}
\right) |\cD_K(\theta^n)_t|^2 
+
\left(
2\ul{\lambda}_V
-
\frac{\rho\tau  \ol{\lambda}_V^2}{\ul{\lambda}_V^2 
 }  
 \right)|\cD^n_{V}(\theta^n)_t|^2
\bigg)\, \d t.
\end{align*}
\end{Proposition}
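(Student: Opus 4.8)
The plan is to apply the almost-smoothness estimate of Proposition \ref{prop:Lipshcitz_smooth} to the two consecutive iterates $\theta=\theta^n$ and $\theta'=\theta^{n+1}$, and then turn the resulting \emph{local} bound into a uniform one by feeding in the a-priori bounds from Propositions \ref{prop:K_bdd}, \ref{prop:V_bound} and \ref{prop:Sigma_bdd}. First I would record that for $\tau\in(0,1/\ol{\lambda}_0]$ the iterates remain in $\Theta$, so that Proposition \ref{prop:Lipshcitz_smooth} applies: by Proposition \ref{prop:V_bound} one has $\ul{\lambda}_V I_k\preceq V^n,V^{n+1}\preceq\ol{\lambda}_V I_k$, by Proposition \ref{prop:K_bdd} the matrix $M^n_t\coloneqq D^\top_t P^{\theta^n}_t D_t+R_t+\rho\bar V^{-1}_t$ satisfies $M^n_t\preceq\ol{\lambda}_0 I_k$, and $K^n,K^{n+1}\in L^2$. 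Substituting the update rules $K^{n+1}_t-K^n_t=-\tau\cD_K(\theta^n)_t$ and $V^{n+1}_t-V^n_t=-\tau\cD^{\rm bw}_V(\theta^n)_t$ into the estimate of Proposition \ref{prop:Lipshcitz_smooth} then produces an integrand with a $K$-part and a $V$-part, which I estimate separately.

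For the $K$-part I would rewrite the matrix inner products as traces, $\la\cD_K,\cD_K\Sigma\ra=\tr(\cD_K^\top\cD_K\Sigma)$ and $\la\cD_K,M^n\cD_K\Sigma\ra=\tr(\cD_K^\top M^n\cD_K\Sigma)$, and use the elementary fact (already invoked in the proof of Proposition \ref{prop:Lojaiewicz}) that $\tr(AB)\ge c\,\tr(A)$ whenever $A\succeq0$ and $B\succeq cI$, together with the corresponding upper bound $\tr(AB)\le c'\tr(A)$ for $B\preceq c'I$. Since $\cD_K^\top\cD_K\succeq0$ and $\cD_K^\top M^n\cD_K\succeq0$, the bounds $\ul{\lambda}_X I_d\preceq\Sigma^{\theta^{n+1}}_t\preceq\ol{\lambda}_X I_d$ and $M^n_t\preceq\ol{\lambda}_0 I_k$ give the first-order lower bound $\tr(\cD_K^\top\cD_K\,\Sigma^{\theta^{n+1}}_t)\ge\ul{\lambda}_X|\cD_K(\theta^n)_t|^2$ and the second-order upper bound $\tfrac{\tau^2}{2}\tr(\cD_K^\top M^n\cD_K\,\Sigma^{\theta^{n+1}}_t)\le\tfrac{\tau^2}{2}\ol{\lambda}_0\ol{\lambda}_X|\cD_K(\theta^n)_t|^2$, which combine into the first term on the right-hand side.

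For the $V$-part, writing $G\coloneqq\cD_V(\theta^n)_t$ and $V\coloneqq V^n_t$ (both symmetric, as $\cD_V$ in \eqref{eq:D_V} is a sum of symmetric matrices), the Bures--Wasserstein structure $\cD^{\rm bw}_V(\theta^n)_t=GV+VG$ gives the first-order term $\la G,-\tau(GV+VG)\ra=-2\tau\tr(G^2V)\le-2\tau\ul{\lambda}_V|\cD_V(\theta^n)_t|^2$, using $G^2\succeq0$ and $V\succeq\ul{\lambda}_V I_k$. For the quadratic term I would bound $|V^{n+1}_t-V^n_t|^2=\tau^2|GV+VG|^2\le4\tau^2\ol{\lambda}_V^2|G|^2$ via $|GV+VG|\le2\|V\|_2|G|\le2\ol{\lambda}_V|G|$, and use $\min(\lambda^2_{\min}(V^n_t),\lambda^2_{\min}(V^{n+1}_t))\ge\ul{\lambda}_V^2$ in the denominator to obtain $\tfrac{\rho}{4}\tfrac{|V^{n+1}_t-V^n_t|^2}{\min(\cdot)}\le\tfrac{\rho\tau^2\ol{\lambda}_V^2}{\ul{\lambda}_V^2}|\cD_V(\theta^n)_t|^2$. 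Combining the two $V$-estimates yields the second term. Summing the $K$- and $V$-contributions and integrating over $[0,T]$ gives the claim.

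The computations are essentially routine; the only point requiring care is that the spectral bounds on $\Sigma^{\theta^{n+1}}$, $M^n$ and $V^n$ may be pushed through the traces precisely because the accompanying factors $\cD_K^\top\cD_K$, $\cD_K^\top M^n\cD_K$ and $G^2$ are positive semidefinite, and in keeping track of which iterate's a-priori bound is used in each place. In effect the substance has been front-loaded into Proposition \ref{prop:uniform_bound} (through Propositions \ref{prop:K_bdd}, \ref{prop:V_bound} and \ref{prop:Sigma_bdd}), whose uniform bounds are exactly what make the local estimate of Proposition \ref{prop:Lipshcitz_smooth} uniform along the iterates.
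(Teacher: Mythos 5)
Your proposal is correct and follows essentially the same route as the paper's proof: apply Proposition \ref{prop:Lipshcitz_smooth} to $\theta=\theta^n$, $\theta'=\theta^{n+1}$, substitute the updates $\Delta K^n=-\tau\cD_K(\theta^n)$ and $\Delta V^n=-\tau(\cD_V(\theta^n)V^n+V^n\cD_V(\theta^n))$, and absorb the spectral bounds on $\Sigma^{\theta^{n+1}}$, $D^\top P^{\theta^n}D+R+\rho\bar V^{-1}$ and $V^n,V^{n+1}$ from Propositions \ref{prop:K_bdd}, \ref{prop:V_bound} and \ref{prop:Sigma_bdd} through the positive-semidefinite factors $\cD_K^\top\cD_K$, $\cD_K^\top M^n\cD_K$ and $\cD_V^2$. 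Your derivations of the key intermediate bounds $\la \cD_V,\{\cD_V V^n\}_S\ra\ge 2\ul{\lambda}_V|\cD_V|^2$ and $|\{\cD_V V^n\}_S|^2\le 4\ol{\lambda}_V^2|\cD_V|^2$ match the paper's (up to the cosmetic difference of invoking submultiplicativity of the Frobenius norm rather than the trace--eigenvalue inequality), so there is nothing to fix.
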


\begin{proof}
For each $n\in \sN_{  0}$, 
let $\Sigma^{n}=\Sigma^{\theta^n}$,
 $P^{n}=P^{\theta^n}$,
$\Delta K^{n}=K^{n+1}-K^{n}$,
$\Delta V^{n}=V^{n+1}-V^{n}$,
$\cD^n_{K}=\cD_K(\theta^n)$,
and 
$\cD^n_{V}=\cD_V(\theta^n)$.
By 
using Proposition \ref{prop:Lipshcitz_smooth}
and
the fact that $\ul{\lambda}_V I_k
 \preceq V^n
 \preceq \ol{\lambda}_V I_k$ 
for all $n\in \sN_0$,
  \begin{align*}
 \cC(\theta^{n+1})-\cC(\theta^n)
  & \le 
\int_0^T 
\bigg(
\la  \Delta K^n_t, \cD^n_{K,t}\Sigma^{n+1}_t \ra 
+
\frac{1}{2}\la \Delta K^n_t , (D^\top_t  P^n_t  D_t   +R_t+\rho \bar{V}_t^{-1}) (\Delta K^n_t)\Sigma^{{n+1}}_t\ra
\\
&\quad 
  +\la 
\cD^n_{V, t}, \Delta  V^n_t\ra 
+
\frac{\rho}{4\ul{\lambda}_V^2 
 } | \Delta  V^n_t|^2 
\bigg)\, \d t
\\
 &  \le 
\int_0^T 
\bigg(
\la -\tau\cD^n_{K,t}, \cD^n_{K,t}\Sigma^{n+1}_t \ra 
+
\frac{\tau^2 }{2}\la  \cD^n_{K,t} , (D^\top_t  P^n_t  D_t   +R_t+\rho \bar{V}_t^{-1}) \cD^n_{K,t} \Sigma^{{n+1}}_t\ra
\\
&\quad     -\tau 
\left[
\la 
\cD^n_{V, t}, \{\cD^n_{V, t}  V^n_t\}_S\ra 
-
\frac{\rho\tau}{4\ul{\lambda}_V^2 
 } | \{\cD^n_{V, t}  V^n_t\}_S|^2 
 \right]
\bigg)\, \d t
\end{align*}
with
$\{\cD^n_{V, t}  V^n_t\}_S\coloneqq \cD^n_{V, t} V^n_t+V^n_t \cD^n_{V, t}$, 
where the last inequality used 
  \eqref{eq:NPG}.
Recall    that for all $S_1,S_2\in \ol{ \sS^k_{+}}$,
 $ \lambda_{\min}(S_1)\tr(S_2)\le \tr(S_1S_2)\le
  \lambda_{\max}(S_1)
 \tr(S_2) $.
 Hence 
 $\la 
\cD^n_{V, t}, \{\cD^n_{V, t}  V^n_t\}_S\ra 
\ge 2 \ul{\lambda}_V |\cD^n_{V, t}|^2
$,
and 
$| \{\cD^n_{V, t}  V^n_t\}_S|^2\le
 4\ol{\lambda}_V^2  |  \cD^n_{V, t} |^2
 $. 
 Hence for all $n\in \sN_0$,
  \begin{align*}
  \cC(\theta^{n+1})-\cC(\theta^n)
  &   
 \le 
\int_0^T 
\bigg(
-\tau \left( \lambda_{\min}(\Sigma^{n+1}_t)-
\frac{\tau }{2} \lambda_{\max}( (D^\top_t  P^n_t  D_t   +R_t+\rho \bar{V}_t^{-1}) )\lambda_{\max}(\Sigma^{{n+1}}_t)
\right) |\cD^n_{K,t}|^2 
\\
&\q
-\tau 
\left(
2\ul{\lambda}_V
-
\frac{\rho\tau  \ol{\lambda}_V^2}{\ul{\lambda}_V^2 
 }  
 \right)|\cD^n_{V, t}|^2
\bigg)\, \d t.
\end{align*}
The desired inequality then follows from 
  Propositions \ref{prop:K_bdd} and \ref{prop:Sigma_bdd}.
\end{proof}

The next proposition establishes
a uniform {\L}ojasiewicz property of the cost      $\cC:\Theta \to \sR$
along the iterates \eqref{eq:NPG}.

\begin{Proposition}
\label{prop:local_Lojasiewicz}
 Suppose (H.\ref{assum:coefficident}) 
  and (H.\ref{assum:nondegeneracy})
 hold,
 and $\sE[\xi_0\xi_0^\top]\succ 0$.
 Let $\theta^\star\in \Theta$ be defined in \eqref{eq:KV_star}.
 For each 
 $\theta\in \Theta$,
let   $P^{\theta}\in C([0,T];\sS^d)$ satisfy  \eqref{eq:lyapunov_reg},
and   let 
   $\Sigma^\theta \in C([0,T];\ol{\sS^d_+})$ satisfy \eqref{eq:lq_sde_K_reg_cov}.
Let 
 $\theta^0 \in \Theta$
 and 
  $\ol{\lambda}_{ 0}>0$
 such that $\ol{\lambda}_{0} I_k \succeq D^\top P^{\theta^0} D+R+\rho \bar{V}^{-1}$.
 For  each 
$\tau\in  (0,1/ \ol{\lambda}_0 ]$, 
let $(\theta^n)_{n\in \sN_0}\subset \Theta$ be defined in \eqref{eq:NPG}.
Then
for all $\tau \in  (0,1/ \ol{\lambda}_0 ]$ and $n\in \sN_{  0}$, 
\begin{align*}
\cC(\theta^{n})-\cC(\theta^\star)
  \le 
\max\left(
\frac{\ol{\lambda}^\star_{X}}{2\widetilde{\delta}},
\frac{  \max( \ol{\lambda}_V ,   \ol{\lambda}^\star_V)^2}{\rho}
\right)\int_0^T  
\left( 
|\cD_{K}(\theta^n)_t |^2
+
|\cD_{V}(\theta^n)_t  |^2
\right)
\, \d t,
\end{align*}
where
$ \widetilde{\delta}>0$ is the same as in 
(H.\ref{assum:nondegeneracy}),
$\ol{\lambda}^\star_X>0 $ satisfies 
$   \Sigma^{\theta^\star}
\preceq \ol{\lambda}^\star_{X} I_d $,
$ \ol{\lambda}^\star_V>0$ 
satisfies  
$ V^\star
 \preceq \ol{\lambda}^\star_V I_k$,
and 
$ \ol{\lambda}_V>0$ 
satisfies  
$ V^n
 \preceq \ol{\lambda}_V I_k
$ for all $n\in \sN_0$.

\end{Proposition}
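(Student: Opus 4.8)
The plan is to derive this uniform {\L}ojasiewicz inequality directly from the non-uniform version already established in Proposition \ref{prop:Lojaiewicz}, by replacing every $\theta$-dependent quantity appearing in its right-hand side by the uniform a-priori bounds that hold along the iterates. Concretely, I would apply Proposition \ref{prop:Lojaiewicz} at $\theta=\theta^n$, which gives
\begin{align*}
\cC(\theta^n)-\cC(\theta^\star) \le \int_0^T\bigg(\frac{1}{2}\la (M^n_t)^{-1}\cD_K(\theta^n)_t, \cD_K(\theta^n)_t\Sigma^{\theta^\star}_t\ra + \frac{1}{\rho}\max(\|V^\star_t\|_2^2, \|V^n_t\|_2^2)|\cD_V(\theta^n)_t|^2\bigg)\,\d t,
\end{align*}
where I write $M^n_t\coloneqq D^\top_t P^{\theta^n}_t D_t + R_t + \rho\bar{V}_t^{-1}$. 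It then remains only to bound each integrand pointwise in $t$ by a constant multiple of $|\cD_K(\theta^n)_t|^2+|\cD_V(\theta^n)_t|^2$, uniformly in $n$.

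For the $K$-term, the crucial observation is that Proposition \ref{prop:K_bdd} Item \ref{item:P_monotone} (valid for $\tau\in(0,1/\ol{\lambda}_0]$) provides the uniform \emph{lower} bound $M^n\succeq\widetilde{\delta}I_k$, whence $\lambda_{\max}((M^n_t)^{-1})\le 1/\widetilde{\delta}$. Writing the matrix inner product as a trace and using the elementary inequality $\tr(S_1 S_2)\le\lambda_{\max}(S_1)\tr(S_2)$ for $S_1,S_2\in\ol{\sS^k_+}$ (as recalled in the proof of Proposition \ref{prop:local_smooth}), I would estimate
\begin{align*}
\frac{1}{2}\la (M^n_t)^{-1}\cD_K(\theta^n)_t, \cD_K(\theta^n)_t\Sigma^{\theta^\star}_t\ra = \frac{1}{2}\tr\big(\cD_K(\theta^n)_t^\top (M^n_t)^{-1}\cD_K(\theta^n)_t\,\Sigma^{\theta^\star}_t\big) \le \frac{\ol{\lambda}^\star_X}{2\widetilde{\delta}}\,|\cD_K(\theta^n)_t|^2,
\end{align*}
pulling out $\lambda_{\max}(\Sigma^{\theta^\star}_t)\le\ol{\lambda}^\star_X$ first and then $\lambda_{\max}((M^n_t)^{-1})\le 1/\widetilde{\delta}$, and using $\tr(\cD_K^\top(M^n)^{-1}\cD_K)=\tr((M^n)^{-1}\cD_K\cD_K^\top)$ with $\tr(\cD_K\cD_K^\top)=|\cD_K|^2$. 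For the $V$-term, I would use $\|V^\star_t\|_2=\lambda_{\max}(V^\star_t)\le\ol{\lambda}^\star_V$ together with $\|V^n_t\|_2=\lambda_{\max}(V^n_t)\le\ol{\lambda}_V$ (the latter from Proposition \ref{prop:V_bound}), giving $\max(\|V^\star_t\|_2^2,\|V^n_t\|_2^2)\le\max(\ol{\lambda}_V,\ol{\lambda}^\star_V)^2$.

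Summing the two pointwise estimates, integrating over $[0,T]$, and factoring out the larger of the two coefficients then yields the claimed bound with constant $\max\big(\ol{\lambda}^\star_X/(2\widetilde{\delta}),\,\max(\ol{\lambda}_V,\ol{\lambda}^\star_V)^2/\rho\big)$. There is no genuine analytic difficulty remaining at this stage: the substantive work has already been absorbed into the a-priori bounds of Proposition \ref{prop:uniform_bound}, which is precisely what converts the $\theta$-dependent constants of Proposition \ref{prop:Lojaiewicz} into constants uniform over the iterates. The only point requiring care is the matrix-trace manipulation of the $K$-term, and in particular the recognition that the uniform upper bound $1/\widetilde{\delta}$ on $\lambda_{\max}((M^n_t)^{-1})$ is exactly the reciprocal of the uniform lower bound $\widetilde{\delta}I_k\preceq M^n$, which itself rests on the monotonicity $P^{\theta^n}\succeq P^\star$ underlying Proposition \ref{prop:K_bdd}.
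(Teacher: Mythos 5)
Your proposal is correct and follows essentially the same route as the paper's own proof: apply Proposition \ref{prop:Lojaiewicz} at $\theta=\theta^n$, invoke $D^\top P^{\theta^n}D+R+\rho\bar{V}^{-1}\succeq\widetilde{\delta}I_k$ from Proposition \ref{prop:K_bdd}, bound $\Sigma^{\theta^\star}\preceq\ol{\lambda}^\star_X I_d$ via Lemma \ref{lemma:Sigma_bdd} and the $V$-norms via Proposition \ref{prop:V_bound}, and take the maximum of the two coefficients. Your trace manipulation for the $K$-term is exactly the estimate the paper performs implicitly in its second displayed inequality, so there is nothing to add.
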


\begin{proof}
Let $  \ol{\lambda}_V^\star>0$ be such that 
$  V^\star \preceq \ol{\lambda}_V^\star I_k $.
For each $n\in \sN_{  0}$, 
let $\Sigma^{n}=\Sigma^{\theta^n}$,
 $P^{n}=P^{\theta^n}$,
$\cD^n_{K}=\cD_K(\theta^n)$,
and 
$\cD^n_{V}=\cD_V(\theta^n)$.
Recall that 
there exists $\ul{\lambda}_V, \ol{\lambda}_V>0$ 
such that 
$\ul{\lambda}_V I_k
 \preceq V^n
 \preceq \ol{\lambda}_V I_k
$ for all $n\in \sN_0$.
Then for all $n\in \sN_0$,
      Proposition  \ref{prop:K_bdd} shows that
$   D^\top_t  P^n_t  D_t   +R_t+\rho \bar{V}_t^{-1}\succeq \widetilde{\delta} I_k$,
which along with Proposition   \ref{prop:Lojaiewicz} shows that
\begin{align*}
\cC(\theta^n)-\cC(\theta^\star)
& \le
 \int_0^T 
 \left(
\frac{1}{2}  \la
(D^\top_t  P^n_t  D_t   +R_t+\rho \bar{V}_t^{-1})^{-1} \cD^n_{K,t},  \cD^n_{K,t}\Sigma^{\theta^\star}_t\ra
+\frac{  \max( \ol{\lambda}_V ,   \ol{\lambda}^\star_V)^2}{\rho}
|\cD^n_{V,t} |^2
\right)
\, \d t
\\
&  \le
\int_0^T  
\left(
\frac{\ol{\lambda}^\star_{X}}{2\widetilde{\delta}}  
|\cD^n_{K,t} |^2
+\frac{ \max( \ol{\lambda}_V ,   \ol{\lambda}^\star_V)^2}{\rho}
|\cD^n_{V,t} |^2
\right)
\, \d t,
\end{align*}
with  
$\ol{\lambda}^\star_{X}>0$ such that 
$\Sigma^{\theta^\star}\preceq \ol{\lambda}^\star_{X} I_d$ (cf.~Lemma \ref{lemma:Sigma_bdd}).
This proves the desired estimate.  
\end{proof}

\begin{proof}[Proof of Theorem \ref{thm:linear_conv}]
Let  $\ol{\lambda}_{ 0}>0$
be  such that $\ol{\lambda}_{0} I_k \succeq D^\top P^{\theta^0} D+R+\rho \bar{V}^{-1}$,
 where 
$ P^{\theta_0}\in C([0,T];\sS^d)$ satisfies \eqref{eq:lyapunov_reg}
with $\theta=\theta_0$.
Then 
by Proposition \ref{prop:local_smooth},
for all 
$\tau \in  (0,1/{\ol{\lambda}_0}]$ and 
$n\in \sN_0$,
\begin{align*}
\cC(\theta^{n+1})-\cC(\theta^n)
  \le 
-\tau \int_0^T 
\bigg(
 \left( \ul{\lambda}_{X}-  
\frac{\tau }{2} \ol{\lambda}_{0}  \ol{\lambda}_{X}
\right) |\cD_K(\theta^n)_t|^2 
+
\left(
2\ul{\lambda}_V
-
\frac{\rho\tau  \ol{\lambda}_V^2}{
\ul{\lambda}_V^2 
 }  
 \right)|\cD^n_{V}(\theta^n)_t|^2
\bigg)\, \d t,
\end{align*}
with the constants $\ul{\lambda}_X, \ol{\lambda}_X>0$  in
  Proposition   \ref{prop:Sigma_bdd}.
Hence by setting $\widetilde{C}_1=\max({\ol{\lambda}_0}, \frac{2\rho \ol{\lambda}_V^2 }{3 \ul{\lambda}_V^3 }, \frac{\ol{\lambda}_0\ol{\lambda}_X}{\ul{\lambda}_X})$, it holds 
for all 
$\tau \in  (0,1/\widetilde{C}_1] $ and 
$n\in \sN_0$,
\begin{align*}
\cC(\theta^{n+1})-\cC(\theta^n)
&  \le 
-\tau \int_0^T 
\bigg(
  \frac{\ul{\lambda}_{X}}{2} |\cD_K(\theta^n)_t|^2 
+ \frac{\ul{\lambda}_{V}}{2} |\cD_V(\theta^n)_t|^2 
\bigg)\, \d t
\\
&\le 
-\tau \frac{1}{2}\min
(\ul{\lambda}_{X},\ul{\lambda}_{V})
\int_0^T 
\bigg( |\cD_K(\theta^n)_t|^2 
+|\cD_V(\theta^n)_t|^2  
\bigg)\, \d t
\\
&\le 
-\tau C_1
\left(\cC(\theta^{n})-\cC(\theta^\star)\right),
\quad
\textnormal{with
$C_1\coloneqq \frac{\min(\ul{\lambda}_{X},\ul{\lambda}_{V})}
{2\max\Big(\frac{\ol{\lambda}^\star_{X}}{2\widetilde{\delta}}  ,
\frac{  \max( \ol{\lambda}_V ,   \ol{\lambda}^\star_V)^2 }{\rho} 
\Big)}
$,}
\end{align*}
where the last inequality used Proposition \ref{prop:local_Lojasiewicz}.
Thus,
for all $\tau \in  (0,\tau_0]$ with $ \tau_0>0$ satisfying
$$
\frac{1}{\tau_0}\ge
 \max\Bigg( {\ol{\lambda}_0}, \frac{2\rho \ol{\lambda}_V^2 }{3 \ul{\lambda}_V^3 }, \frac{\ol{\lambda}_0\ol{\lambda}_X}{\ul{\lambda}_X},
 \frac{\min(\ul{\lambda}_{X},\ul{\lambda}_{V})}
{2\max\Big(\frac{\ol{\lambda}^\star_{X}}{2\widetilde{\delta}}  ,
\frac{  \max( \ol{\lambda}_V ,   \ol{\lambda}^\star_V)^2 }{\rho} 
\Big)}
  \Bigg),
$$
we have 
 for all $n\in \sN_0$,
$\cC(\theta^{n+1})\le \cC(\theta^n)$
and 
 \begin{align} 
 \label{eq:linear_value_proof}
\cC(\theta^{n+1})-\cC(\theta^\star)
\le
\cC(\theta^{n+1})-\cC(\theta^n)+
\cC(\theta^{n})-\cC(\theta^\star)
\le
(1-\tau  {C}_1
)
 \big(
 \cC(\theta^n)-\cC(\theta^\star)
 \big).
\end{align}
 
To prove Item \ref{item:control_conv},
observe that $\cD_K(\theta^\star)=0$ and $\cD_V(\theta^\star)=0$.
Hence by Lemma \ref{lemma:performance_gap} and \eqref{eq:PL_V},
for all $n\in \sN_0$,
 \begin{align*}
 &\cC(\theta^n)-\cC(\theta^\star)
 \\
 &\q \ge
\int_0^T 
\bigg(
\frac{1}{2}\la K^n_t-K^\star_t , (D^\top_t  P^\star_t  D_t   +R_t+\rho \bar{V}_t^{-1}) (K^n_t-K^\star_t)\Sigma^{\theta^n}_t\ra
+\frac{\rho}{4}  
 \frac{|  V^n_t-V^\star_t |^2}{ \max(\|V^n_t\|^2_2,  \|  V^\star_t\|^2_2 ) }
\bigg)\, \d t
 \\
 &\q \ge
\int_0^T 
\bigg(
\frac{1}{2}\widetilde{\delta}\ul{\lambda}_X |K^n_t-K^\star_t |^2
+\frac{\rho}{4\ol{\lambda}_V^2}  
 |  V^n_t-V^\star_t |^2 
\bigg)\, \d t,
\end{align*}
 where the last inequality used (H.\ref{assum:nondegeneracy}),
    Proposition   \ref{prop:Sigma_bdd}
    and $V^\star, V^n\preceq \ol{\lambda}_V I_k$.
This along with Item \ref{item:value_conv} proves Item \ref{item:control_conv}
with $C_2=1/\min\left(\frac{1}{2}\widetilde{\delta}\ul{\lambda}_X, \frac{\rho}{4\ol{\lambda}_V^2} \right)$.
\end{proof}

\subsection{Proofs of Theorem
\ref{thm:mesh_independent_general} and 
Corollary \ref{corollary:mesh_independent_piecewise}}
\label{sec:proof_mesh_independence}

The following lemma proves the optimal costs of piecewise constant policies converges to the optimal cost of continuous-time policies as $|\pi|\to 0$.
\begin{Lemma}
\label{lemma:discrete_value_conv}
Suppose  (H.\ref{assum:coefficident}) and  (H.\ref{assum:nondegeneracy}) hold.
Let $(\pi_m)_{m\in \sN}\subset \mathscr{P}_{[0,T]} $ be such that $\lim_{m\to \infty}|\pi_m|=0$.
Then $\lim_{m\to \infty}C^\star_{\pi_m}=
\inf_{\theta\in \Theta}  \cC(\theta)$.
\end{Lemma}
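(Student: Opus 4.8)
The plan is to prove two matching inequalities. Since $\Theta^{\pi_m}\subset\Theta$ for every $m$, we trivially have $\cC^\star_{\pi_m}\ge \inf_{\theta\in\Theta}\cC(\theta)=\cC(\theta^\star)$, and hence $\liminf_{m\to\infty}\cC^\star_{\pi_m}\ge\cC(\theta^\star)$. The substance is therefore the reverse bound $\limsup_{m\to\infty}\cC^\star_{\pi_m}\le\cC(\theta^\star)$, for which it suffices to exhibit, for each $m$, a piecewise constant policy $\theta^m\in\Theta^{\pi_m}$ with $\cC(\theta^m)\to\cC(\theta^\star)$; then $\cC^\star_{\pi_m}\le\cC(\theta^m)$ gives the claim.

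First I would construct $\theta^m=(K^m,V^m)$ by averaging the optimal policy $\theta^\star=(K^\star,V^\star)$ in \eqref{eq:KV_star} over the cells of $\pi_m$: on each $[t_i,t_{i+1})$ set $K^m_t=\tfrac1{\Delta_i}\int_{t_i}^{t_{i+1}}K^\star_s\,\d s$, and likewise for $V^m$. Averaging preserves the Loewner bounds, so $\eps I_k\preceq V^m\preceq\tfrac1\eps I_k$ (because $x^\top V^m_t x$ is an average of $x^\top V^\star_s x\in[\eps|x|^2,\tfrac1\eps|x|^2]$), and Jensen's inequality gives $\|K^m\|_{L^2}\le\|K^\star\|_{L^2}$; thus $\theta^m\in\Theta^{\pi_m}\subset\Theta$. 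Since piecewise constant functions associated with meshes of vanishing size are dense in $L^2$, the averaging operators converge strongly, so $K^m\to K^\star$ and $V^m\to V^\star$ in $L^2(0,T)$ as $m\to\infty$. In particular the whole sequence $(\theta^m)_{m}$ lies in the fixed bounded set $\{\theta=(K,V)\mid \|K\|_{L^2}\le\|K^\star\|_{L^2},\ \eps I_k\preceq V\preceq\tfrac1\eps I_k\}$.

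It then remains to pass to the limit in the cost. I would first establish continuity of $\theta\mapsto\Sigma^\theta$ along this sequence: writing $\Delta^m=\Sigma^{\theta^m}-\Sigma^{\theta^\star}$ and subtracting the two Lyapunov equations \eqref{eq:lq_sde_K_reg_cov}, the right-hand side is linear in $\Delta^m$ plus error terms driven by $K^m-K^\star$ and $V^m-V^\star$; invoking the uniform a-priori bounds of Lemma \ref{lemma:Sigma_bdd} (valid on the bounded set above) to control the quadratic-in-$K$ drift and then Gronwall's inequality yields $\|\Sigma^{\theta^m}-\Sigma^{\theta^\star}\|_{L^\infty}\to0$. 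With this uniform convergence I would pass to the limit term by term in the explicit cost representation \eqref{eq:cost_K_reg_cov}: the $\Sigma$-only terms converge because $Q\in L^1$ and $G$ is constant while $\Sigma^{\theta^m}\to\Sigma^{\theta^\star}$ uniformly; the terms linear and quadratic in $K$ are continuous (bounded bilinear/quadratic) forms in $K\in L^2$ with the $L^\infty$-coefficient $R+\rho\bar V^{-1}$ and the $L^2$-coefficient $S$, hence stable under $K^m\to K^\star$ in $L^2$ together with the uniform convergence of $\Sigma$; and the $V$-dependent integrand $\tr(R_tV)+\rho(\tr(\bar V_t^{-1}V)-\ln\det V)$ is Lipschitz in $V$ on $\{\eps I_k\preceq V\preceq\tfrac1\eps I_k\}$ (since $\ln\det$ has gradient $V^{-1}$ bounded by $\tfrac1\eps$ there), so it converges under $V^m\to V^\star$ in $L^1$. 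Hence $\cC(\theta^m)\to\cC(\theta^\star)$.

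I expect the Gronwall stability of $\theta\mapsto\Sigma^\theta$ to be the main obstacle, since the Lyapunov drift is quadratic in $K$ while $A,C$ are only $L^1$/$L^2$-integrable; the resolution is to linearise the quadratic term $(C+DK^m)\Sigma^{\theta^m}(C+DK^m)^\top$ around $\theta^\star$ using the uniform bounds on $\Sigma^{\theta^m}$ and the $L^2$-boundedness of $K^m$, so that all error terms are integrable and Gronwall applies. Collecting the two inequalities gives $\lim_{m\to\infty}\cC^\star_{\pi_m}=\cC(\theta^\star)=\inf_{\theta\in\Theta}\cC(\theta)$.
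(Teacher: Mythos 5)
Your proposal is correct, and it matches the paper's proof in its skeleton: the trivial bound $\cC^\star_{\pi_m}\ge\inf_{\theta\in\Theta}\cC(\theta)$ from $\Theta^{\pi_m}\subset\Theta$, the recovery sequence built from cell averages of $\theta^\star$ over $\pi_m$ (exactly the paper's $L^2$ projection $\theta^{m,\star}$), the observation that averaging preserves the Loewner bounds $\eps I_k\preceq V^{m,\star}\preceq\tfrac1\eps I_k$ and the $L^2$ bound on $K^{m,\star}$, and the uniform bound $\Sigma^{\theta^{m,\star}}\preceq CI_d$ via Lemma \ref{lemma:Sigma_bdd}. Where you genuinely diverge is the final limit passage. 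The paper applies the almost-smoothness inequality of Proposition \ref{prop:Lipshcitz_smooth} at $\theta=\theta^\star$, where $\cD_K(\theta^\star)=0$ and $\cD_V(\theta^\star)=0$ by \eqref{eq:riccati_star_ent} and \eqref{eq:KV_star}, so the cost gap $\cC(\theta^{m,\star})-\cC(\theta^\star)$ is controlled purely by terms quadratic in $\theta^{m,\star}-\theta^\star$; crucially this needs only the uniform \emph{upper} bound on $\Sigma^{\theta^{m,\star}}$, not its convergence, so no stability estimate for the Lyapunov equation is required, and it yields a quantitative rate $\cO(\|\theta^{m,\star}-\theta^\star\|_{L^2}^2)$ in the projection error. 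You instead prove $\|\Sigma^{\theta^m}-\Sigma^{\theta^\star}\|_{L^\infty}\to0$ by linearising the difference of the two Lyapunov equations and applying Gronwall with $L^1$ coefficients (your handling of the quadratic-in-$K$ term via the uniform $L^2$ bound on $K^m$ and Cauchy--Schwarz is the right resolution, since $|C+DK^m|^2$ is bounded in $L^1$ uniformly in $m$), and then pass to the limit term by term in the explicit representation \eqref{eq:cost_K_reg_cov}. Your route is more elementary and self-contained — it proves continuity of $\cC$ along the sequence without invoking optimality of the limit point, so it would work with any target $\theta\in\Theta$, which is essentially what the paper's Proposition \ref{prop:discrete_convergence} needs separately — at the cost of an extra Gronwall argument and only qualitative convergence; the paper's route is shorter given the landscape machinery already in place and exploits the vanishing gradient at $\theta^\star$ for a sharper, purely quadratic bound.
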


\begin{proof}
For each $m\in \sN$,
by $\Theta^{\pi_m}\subset \Theta$,
 $\cC_{\pi_m}^\star=  \inf_{\theta\in \Theta^{\pi_m}}  \cC(\theta)\ge \inf_{\theta\in \Theta}  \cC(\theta)
$, which implies that   
$\lim\inf_{m\to \infty} \cC_{\pi_m}^\star
  \ge \inf_{\theta\in \Theta}  \cC(\theta)$.
 On the other hand, 
 let $\theta^\star=(K^\star,V^\star)$ be defined in \eqref{eq:KV_star},
 and 
 for each $m\in \sN$, let $\theta^{m,\star}=(K^{m,\star}, V^{m,\star})$ be  the $L^2$ projection of $\theta^\star$ onto $\Theta^m$ such that 
 $K^{m,\star}_t= \sum^{N_m-1}_{i=0} \ol{K}^\star_{t_i} \mathds{1}_{[t_i,t_{i+1})}(t)$
 and 
  $V^{m,\star}_t= \sum^{N_m-1}_{i=0} \ol{V}^\star_{t_i} \mathds{1}_{[t_i,t_{i+1})}(t)$
  for a.e.~$t\in [0,T]$, 
  where 
  $$
  \ol{K}^\star_{t_i} =\frac{1}{t_{i+1}-t_i}\int_{t_i}^{t_{i+1}}
  {K}^\star_t\, \d t,
  \quad 
    \ol{V}^\star_{t_i} =\frac{1}{t_{i+1}-t_i}\int_{t_i}^{t_{i+1}}
  {V}^\star_t\, \d t,
  \q \fa i\in \{0,\ldots, N_m-1\}.
  $$
A standard mollification argument shows that 
  $\lim_{m\to \infty}\|\theta^{m,\star}-\theta^\star\|_{L^2}=0$.
  Moreover, the fact that 
$\eps I_k \preceq V^\star\preceq  \frac{1}{\eps} I_k$ for  some $\eps>0$
implies that 
$\eps I_k \preceq V^{m,\star}\preceq  \frac{1}{\eps} I_k$ for all $m\in \sN$.
By  the uniform $L^2$-bound of $(K^{m,\star})_{m\in\sN}$ and 
the $L^\infty$-bound of $(V^{m,\star})_{m\in\sN}$,
there exists $C\ge 0$ such that
$\Sigma^{\theta^{ m,\star}}\preceq C I_d$ for all $m\in \sN$ due to Lemma \ref{lemma:Sigma_bdd}.
Then by Proposition \ref{prop:Lipshcitz_smooth},
for all $m\in \sN$,
 \begin{align*}
 \cC(\theta^{ m,\star})-\cC(\theta^\star)
 &\le 
\int_0^T 
\bigg(
  \frac{1}{2}\la K^{ m,\star}_t-K^\star_t , (D^\top_t  P^{\theta^\star}_t  D_t   +R_t+\rho \bar{V}_t^{-1}) (K^{ m,\star}_t-K^\star_t)\Sigma^{\theta^{ m,\star}}_t\ra
\\
&\quad 
+
\frac{\rho}{4} 
\frac{ | V^{ m,\star}_t-V^\star_t  |^2}{
 \min(\lambda^2_{\min}(V^\star_t),    \lambda^2_{\min}(V^{ m,\star}_t) )
 }
\bigg)\, \d t,
\end{align*}
which along with $\lim_{m\to \infty}\|\theta^{m,\star}-\theta^\star\|_{L^2}=0$ and 
$ V^{m,\star}\succeq \eps I_k$, 
$\Sigma^{\theta^{ m,\star}}\preceq C I_d$ for all $m\in \sN$  implies that 
$\lim_{m\to \infty}   \cC(\theta^{ m,\star})
=
 \inf_{\theta\in \Theta}  \cC(\theta)$.
 As  $\cC_{\pi_m}^\star\le  \cC(\theta^{ m,\star}) $ for all $m\in \sN$,
 $$\inf_{\theta\in \Theta}  \cC(\theta)
 \le\lim\inf_{m\to \infty} \cC_{\pi_m}^\star
 \le 
\lim\sup_{m\to \infty} \cC_{\pi_m}^\star
  \le  \lim\sup_{m\to \infty}   \cC(\theta^{ m,\star}) 
  =\inf_{\theta\in \Theta}  \cC(\theta).
  $$ 
This leads to the desired convergence result.\end{proof}

The following proposition proves that 
when the mesh size $|\pi|$ are sufficiently small,
the policies from \eqref{eq:NPG_discrete}
have similar costs as those from
\eqref{eq:NPG}.
 
 \begin{Proposition}  
\label{prop:discrete_convergence}
Suppose (H.\ref{assum:coefficident}),
  (H.\ref{assum:nondegeneracy})
and (H.\ref{assum:D_pi})
 hold.
Assume further that 
 $D\in C([0,T];\sR^{d\t k})$,
 $R\in C([0,T]; \sS^k)$ 
 and 
  $\bar{V} \in C([0,T]; \sS^k_+)$.
 Let  
 $\theta^0  \in 
 L^2(0,T; \sR^{k\t d})\t C([0,T]; \sS^k_+)
 $,
 let $(\pi_m)_{m\in \sN}\subset \mathscr{P}_{[0,T]}$ be  such that $\lim_{m\to \infty}|\pi_m|=0$,
and 
let
$(\theta^{\pi_m,0})_{m\in \sN}\subset \Theta$
be such that 
$\theta^{\pi_m,0} \in \Theta^{\pi_m}$ for all $m\in \sN$, 
  $\lim_{m\to \infty}\|\theta^{\pi_m,0}-\theta^0\|_{L^2\t L^\infty}=0$.
   Let
  $\ol{\lambda}_{ 0}>0$
be  such that $\ol{\lambda}_{0} I_k \succeq D^\top P^{\theta^0} D+R+\rho \bar{V}^{-1}$,
with 
$ P^{\theta_0}\in C([0,T];\sS^d)$ defined in  \eqref{eq:lyapunov_reg},
 and 
for  each 
$\tau>0$, 
let $(\theta^n)_{n\in \sN}$ 
and $(\theta^{\pi_m,n})_{m,n\in \sN}$
be defined in \eqref{eq:NPG}
and \eqref{eq:NPG_discrete}, respectively.
Then for all 
$\tau \in (0,{1}/\ol{\lambda}_0]$
and $N\in \sN_0$,
 $$
\lim_{m\to \infty}
\sup_{n=0,\ldots, N}
| \cC(\theta^{\pi_m,n})-\cC(\theta^{n})|=0.
$$
\end{Proposition}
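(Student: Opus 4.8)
The plan is to proceed in two stages: first establish convergence of the iterates in \emph{parameter space}, $\lim_{m\to\infty}\|\theta^{\pi_m,n}-\theta^{n}\|_{L^2\times L^\infty}=0$ for each fixed $n\in\{0,\ldots,N\}$, by induction on $n$; then upgrade this to convergence of \emph{costs} via the exact performance-gap identity of Lemma \ref{lemma:performance_gap}. A preliminary observation underlies the whole argument: under the extra assumptions $D,R,\bar V\in C$, the continuous-time iterates satisfy $\theta^{n}\in L^2(0,T;\sR^{k\t d})\t C([0,T];\sS^k_+)$ for every $n$. Indeed $P^{\theta}\in C([0,T];\sS^d)$ always holds, so $\cD_K(\theta)\in L^2$ whenever $K\in L^2$, while $\cD^{\rm bw}_V(\theta)=\cD_V(\theta)V+V\cD_V(\theta)$ is continuous whenever $V\in C$ is positive definite; hence the update \eqref{eq:NPG} preserves the class $L^2\t C$ starting from $\theta^0\in L^2\t C$. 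This regularity is exactly what is needed to invoke the consistency condition (H.\ref{assum:D_pi}) with limit $\theta=\theta^{n}$.

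For the induction, the base case $n=0$ is the hypothesis $\lim_{m\to\infty}\|\theta^{\pi_m,0}-\theta^{0}\|_{L^2\t L^\infty}=0$. For the step, subtracting \eqref{eq:NPG} and \eqref{eq:NPG_discrete} gives $K^{\pi_m,n+1}-K^{n+1}=(K^{\pi_m,n}-K^{n})-\tau(\cD^{\pi_m}_K(\theta^{\pi_m,n})-\cD_K(\theta^{n}))$, and the analogous identity for $V$ with $\cD^{\pi_m}_V$ and $\cD^{\rm bw}_V$. Taking $L^2$ (resp.~$L^\infty$) norms and the triangle inequality, the first term vanishes as $m\to\infty$ by the induction hypothesis, and the second vanishes by applying (H.\ref{assum:D_pi}) with $\theta=\theta^{n}\in L^2\t C$ and $\theta^{m}=\theta^{\pi_m,n}\in\Theta^{\pi_m}$, which is legitimate precisely because $\|\theta^{\pi_m,n}-\theta^{n}\|_{L^2\t L^\infty}\to0$. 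This closes the induction.

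To pass from parameters to costs, I would apply Lemma \ref{lemma:performance_gap} with $\theta=\theta^{n}$ and $\theta'=\theta^{\pi_m,n}$, writing $\cC(\theta^{\pi_m,n})-\cC(\theta^{n})$ as an integral whose integrand consists of a term linear and a term quadratic in $K^{\pi_m,n}-K^{n}$, weighted by the fixed $\cD_K(\theta^{n})$, the fixed $M^{\theta^{n}}=D^\top P^{\theta^{n}}D+R+\rho\bar V^{-1}$ and the covariance $\Sigma^{\theta^{\pi_m,n}}$, together with the entropy difference $\ell_t(V^{\pi_m,n}_t,P^{\theta^{n}}_t)-\ell_t(V^{n}_t,P^{\theta^{n}}_t)$. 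For $\tau\in(0,1/\ol\lambda_0]$ the continuous iterates obey the uniform bounds of Propositions \ref{prop:K_bdd}, \ref{prop:V_bound} and \ref{prop:Sigma_bdd}; since $\|K^{\pi_m,n}\|_{L^2}$ stays bounded and $\|V^{\pi_m,n}\|_{L^\infty}$ converges, Lemma \ref{lemma:Sigma_bdd} yields $\Sigma^{\theta^{\pi_m,n}}\preceq C I_d$ uniformly in $m$. Cauchy--Schwarz then controls the linear and quadratic terms by $C\|K^{\pi_m,n}-K^{n}\|_{L^2}+C\|K^{\pi_m,n}-K^{n}\|_{L^2}^2\to0$, and the uniform lower bound $V^{n}\succeq\ul\lambda_V I_k$ combined with $\|V^{\pi_m,n}-V^{n}\|_{L^\infty}\to0$ keeps $V^{\pi_m,n}$ uniformly nondegenerate for large $m$, so $V\mapsto\ell_t(V,P^{\theta^{n}}_t)$ is uniformly Lipschitz there and the entropy term is $\le TL\|V^{\pi_m,n}-V^{n}\|_{L^\infty}\to0$. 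Thus $|\cC(\theta^{\pi_m,n})-\cC(\theta^{n})|\to0$ for each fixed $n$, and as the supremum runs over the finite set $\{0,\ldots,N\}$ the claimed uniform convergence follows.

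The main obstacle I anticipate is not the induction itself but securing the uniformity required at both ends: one must verify that the limiting objects $\theta^{n}$ genuinely lie in the space $L^2\t C$ where (H.\ref{assum:D_pi}) is assumed (this is where the continuity of $D,R,\bar V$ is essential), and that the approximating sequence $V^{\pi_m,n}$ stays uniformly bounded away from the degenerate boundary, so that the logarithmic barrier $-\rho\ln\det(V)$ hidden in $\ell$ remains Lipschitz along the iterates. Both points are resolved by combining the a-priori bounds of Section \ref{sec:proof_a_priori_bound} for the continuous iterates with the $L^\infty$-convergence $V^{\pi_m,n}\to V^{n}$, but they are the delicate places where the continuous-time landscape analysis feeds into the discrete-time approximation.
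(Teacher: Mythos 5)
Your proposal is correct and takes essentially the same route as the paper's own proof: an induction on $n$ establishing $\|\theta^{\pi_m,n}-\theta^{n}\|_{L^2\times L^\infty}\to 0$ via the triangle inequality and (H.\ref{assum:D_pi}), supported by exactly the same two regularity observations the paper makes (that the continuous iterates remain in $L^2(0,T;\sR^{k\times d})\times C([0,T];\sS^k_+)$ thanks to the continuity of $D,R,\bar V$, and that the discrete iterates stay uniformly nondegenerate, i.e.\ in some $\Theta_L$, for large $m$), followed by the uniform bound $\Sigma^{\theta^{\pi_m,n}}\preceq CI_d$ from Lemma \ref{lemma:Sigma_bdd} to convert parameter convergence into cost convergence on the finite index set $\{0,\ldots,N\}$. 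The only cosmetic difference is that you invoke the exact performance-gap identity of Lemma \ref{lemma:performance_gap} where the paper cites the derived smoothness estimate of Proposition \ref{prop:Lipshcitz_smooth} (applied in both directions); these give the same two-sided control, so the arguments coincide in substance.
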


\begin{proof}
For each $L>0$, define 
$
\Theta_L= \left\{\theta=(K,V)\in \Theta \,\Big\vert\, \tfrac{1}{L}I_k\preceq V\preceq LI_k \right\}
$.
Let $\tau \in (0,{1}/\ol{\lambda}_0]$
 be fixed.
By Proposition   \ref{prop:V_bound},
 there exists 
$\ul{\lambda}_V, \ol{\lambda}_V>0$ 
such that 
$\ul{\lambda}_V I_k
 \preceq V^n
 \preceq \ol{\lambda}_V I_k
$ for all $n\in \sN_0$. 
Moreover, by  the continuity of $D$, $R$ and $\bar{V}$, 
and the expressions    \eqref{eq:D_V} and \eqref{eq:NPG}, 
a straightforward induction argument  
 shows that
$V^n\in C([0,T]; \sS^k_+)$ for all $n\in \sN_0$.

 We first prove  by induction
that for all $n\in \sN_{  0}$,
there exists $L>0, m_0\in \sN$ such that 
\bb\label{eq:induction_n}
\lim_{m\to \infty}\|\theta^{\pi_m,n}-\theta^n\|_{L^2\t L^\infty}=0,
\; \textnormal{and
}
 \theta^{\pi_m,n}\in \Theta_L \cap \Theta^{\pi_m}, \;
 \fa 
 m\ge m_0.
\ee
Note that as $\theta^0 \in \Theta$
and     $\lim_{m\to \infty}\|V^{\pi_m,0}-V^0\|_{L^\infty}=0$,
there exists $L>0$ such that 
$\tfrac{1}{L}I_k\preceq 
V^{\pi_m,0}
\preceq LI_k $ 
for all large $m\in \sN$. 
This proves  \eqref{eq:induction_n} for $n=0$.
Now suppose that the induction statement \eqref{eq:induction_n} holds
for some $n\in \sN_{  0}$. 
As $V^n \in  C([0,T]; \sS^k_+)$, 
by \eqref{eq:NPG} and (H.\ref{assum:D_pi}),
the triangle inequality shows  that 
$\lim_{m\to \infty}\|\theta^{\pi_m,n+1}-\theta^{n+1}\|_{L^2\t L^\infty}=0$,
which subsequently implies that 
there exists $L>0$ such that $\tfrac{1}{L}I_k\preceq V^{\pi_m,n+1}\preceq LI_k$ 
for all sufficiently large $m$.
This   proves  the   statement 
\eqref{eq:induction_n}  for $n+1$.

By \eqref{eq:induction_n}, for each $n\in \sN$,
$\sup_{m\in \sN}\|K^{\pi_m,n}\|_{L^2}<\infty$
and $\limsup_{m\in \sN}\|V^{\pi_m,n}\|_{L^\infty}<\infty$.
Thus by Lemma \ref{lemma:Sigma_bdd},
there exists $C\ge 0$ such that
$0 \preceq \Sigma^{\theta^{\pi_m,n}}\preceq C I_d$ for all $m\in \sN$. 
Then
$\lim_{m\to \infty}
| \cC(\theta^{\pi_m,n})-\cC(\theta^{n})|=0$
follows from 
Proposition \ref{prop:Lipshcitz_smooth}
and 
$\lim_{m\to \infty}\|\theta^{\pi_m,n}-\theta^{n}\|_{L^2\t L^\infty}=0$.
This implies  the desired convergence result for any given $N\in \sN$.
\end{proof}

\begin{proof}[Proof of Theorem \ref{thm:mesh_independent_general}]
Let $C^\star=\inf_{\theta\in \Theta}\cC(\theta)=C(\theta^\star)$,
and
for each $\tau>0$ and $m\in \sN$,
let $(\theta^n)_{n\in \sN}$ 
and 
$(\theta^{\pi_m,n})_{n\in \sN}$ be defined by 
  \eqref{eq:NPG} and \eqref{eq:NPG_discrete}
with stepsize $\tau$,
respectively.  
Then 
by Theorem \ref{thm:linear_conv}
and  Proposition \ref{prop:discrete_convergence},
there exists  $\tau_0>0$   such that 
for all $\tau\in (0,\tau_0]$ and $n\in \sN_0$,
$\cC(\theta^{n+1})\le \cC(\theta^{n})$,
$\cC(\theta^{n+1})-\cC^\star\le \eta(\cC(\theta^{n})-\cC^\star)$ for some $\eta \in [0,1)$ 
(independent of $n$),
and 
 $\lim_{m\to \infty}
| \cC(\theta^{\pi_m,n})-\cC(\theta^{n})|=0$.
Moreover, for all $\eps>0$, 
$N(\eps)=\frac{\widetilde{C}}{\tau }\log(\frac{\widetilde{C}}{\eps})$ for some 
$\widetilde{C}>0$ independent of $\tau$ and $\eps$.

 We first prove 
  for all $\tau\in (0,\tau_0]$ and 
all $\eps,\gamma>0$, there exists $\mathfrak{m}_{\eps,\gamma}\in \sN$ such that 
for all $m\ge \mathfrak{m}_{\eps,\gamma}$,
\bb\label{eq:compare_N_eps_delta}
N(\eps+\gamma ) \le N^{\pi_m}(\eps)\le N(\eps).
\ee
To prove $N^{\pi_m}(\eps)\le N(\eps)$,
by Lemma \ref{lemma:discrete_value_conv}
and the choice of $\tau_0$,
 for all $n\in \sN_{ 0}$, 
 $\lim_{m\to \infty}
 ( \cC(\theta^{\pi_m, n})-\cC^\star_{\pi_m})= \cC(\theta^{n})-\cC^\star$.
 Hence,  for all $\eps>0$ and $n\in \sN_{  0}$, 
if $ \cC(\theta^{n})-\cC^\star<\eps$, then 
there exists $\mathfrak{m}_{\eps}\in \sN$ such that 
for all  $m\ge \mathfrak{m}_{\eps}$, 
 $ \cC(K^{\pi_m, n})-\cC^\star_{\pi_m}<\eps$,
 which implies 
$N^{\pi_m}(\eps)\le N(\eps)$
 for all $m\ge \mathfrak{m}_{\eps}$.
 We then prove  
 $N(\eps+\gamma ) \le N^{\pi_m}(\eps)$ with a given 
 $\gamma>0$.
 The convergence of 
 $(\cC(\theta^{n}))_{n\in \sN}$
 implies that $ N(\eps+\gamma)\in \sN_{ 0}$, which along with Lemma \ref{lemma:discrete_value_conv} and Proposition \ref{prop:discrete_convergence} shows that 
\bb\label{eq:convergence_pi_delta_eps}
 \lim_{m\to \infty} \max_{0\le n\le N(\eps+\gamma)}
 \big|
  ( \cC(\theta^{\pi_m, n})-\cC^\star_{\pi_m})
  -(\cC(\theta^{n})-\cC^\star)
 \big|=0.
 \ee
The definition of $N(\eps+\gamma)$ implies that 
$\cC(\theta^{n})-\cC^\star\ge \eps+\gamma $ for all $n<N(\eps+\gamma)$.
Moreover, by 
\eqref{eq:convergence_pi_delta_eps}, 
there exists $\mathfrak{m}_{\gamma}\in \sN$ such that 
for all $m\ge \mathfrak{m}_{\gamma}$, 
 $$
\max_{0\le n < N(\eps+\gamma)}
 \big|
  ( \cC(\theta^{\pi_m, n})-\cC^\star_{\pi_m})
  -(\cC(\theta^{n})-\cC^\star)
\big|\le \gamma.
  $$
Hence
  for all $m\ge \mathfrak{m}_{\gamma}$
and 
  $n<N(\eps+\gamma)$,
   \begin{align*}
 \cC(\theta^{\pi_m, n})-\cC^\star_{\pi_m}
&
= (\cC(\theta^{\pi_m, n})-\cC^\star_{\pi_m})
- (\cC(\theta^{n})-\cC^\star)+ (\cC(\theta^{n})-\cC^\star)
\\
& \ge (\cC(\theta^{n})-\cC^\star)
- \big|
 (\cC(\theta^{\pi_m, n})-\cC^\star_{\pi_m})
- (\cC(\theta^{n})-\cC^\star)
\big|
\\
&\ge 
(\cC(\theta^{n})-\cC^\star)-
\max_{0\le n < N(\eps+\gamma)}
 \big|
 (\cC(\theta^{\pi_m, n})-\cC^\star_{\pi_m})
- (\cC(\theta^{n})-\cC^\star)
\big|\ge \eps.
\end{align*}
 This implies that 
 $N^{\pi_m}(\eps)\ge N(\eps+\gamma)$
for all $m\ge \mathfrak{m}_{\gamma}$.
 Taking $\mathfrak{m}_{\eps,\gamma}=\max(\mathfrak{m}_{\eps},\mathfrak{m}_{\gamma})$ completes the proof of \eqref{eq:compare_N_eps_delta}.

Now we are ready to establish \eqref{eq:N_pi_bound_1}
for    fixed $\tau\in (0,\tau_0]$ and $\eps>0$. 
By the choice of $\tau_0$,
there exists  $\eta\in [0,1)$, independent of $\eps$,  such that for all $n\in \sN_{ 0}$,
$\cC(\theta^{n+1})-\cC^{\star} \le \eta(\cC(\theta^{n})-\cC^{\star})$. Then, by the definition of $N(\eps)$, $\cC(\theta^{n})-\cC^\star\ge \eps$ for all $n<N(\eps)$, 
which yields  the estimate
\begin{align*}
\eta^{N(\eps)-1-n} (\cC(\theta^{n})-\cC^\star)
\ge  \cC(\theta^{N(\eps)-1})-\cC^\star\ge
 \eps, 
\q \fa n< N(\eps)-1.
\end{align*}
This implies that  
$\cC(\theta^{n})-\cC^{\star}\ge \frac{\eps}{\eta}>\eps$ for all 
$n< N(\eps)-1$. 
Now let 
 $\gamma_\eps\coloneqq \min\{\cC(\theta^{n})-\cC^{\star}-\eps\mid n< N(\eps)-1\}$.
Note that $\gamma_\eps>0$
as  $N(\eps)<\infty$. By the definition of $\gamma_\eps$,  for all $n< N(\eps)-1$, 
$\cC(\theta^{n})-\cC^{\star}\ge \eps+\gamma_\eps$,
which implies   that $N(\eps+\gamma_\eps )\ge N(\eps)-1$. Hence, by 
\eqref{eq:compare_N_eps_delta},
   there exists $\mathfrak{m}_{\eps}\in \sN$ such that 
$$
N(\eps)-1\le N(\eps+\gamma_\eps )\
   \le N^{\pi_m}(\eps)\le N(\eps),
   \q \fa m\ge \mathfrak{m}_{\eps}.
$$
This  proves the desired estimate \eqref{eq:N_pi_bound_1}.
 \end{proof}

\begin{proof}[Proof of Corollary \ref{corollary:mesh_independent_piecewise}]
By Proposition \ref{prop:Gateaux}
and \eqref{eq:finite_parameterisation},
 for all
$\pi\in \mathscr{P}_{[0,T]}$,
$\theta\in \Theta^\pi$
and $i\in \{0,\ldots, N-1\}$,
 \begin{align*}
{\nabla_{K_i}\cC}(\theta)
=\int_{t_i}^{t_{i+1}}    
\cD_K(\theta)_t\Sigma^\theta_t\,\d t,
\q
{\nabla_{V_i}\cC}(\theta)
=\int_{t_i}^{t_{i+1}}    
\cD_V(\theta)_t\,\d t,
\end{align*}
where $\cD_K(\theta)$ and 
$\cD_V(\theta)$ are defined by \eqref{eq:D_K}
and \eqref{eq:D_V}, respectively. 
Hence  
$(\cD^\pi_K,\cD^\pi_V):  \Theta^\pi\to  \Theta^\pi$
in \eqref{eq:NPG_discrete_finite}
satisfies  for all $\theta\in \Theta^\pi$,
and  a.e.~$t\in [0,T]$,
 \begin{align}
 \label{eq:D_K_V_finite}
 \begin{split}
\cD^\pi_K(\theta)_t&= \sum^{N-1}_{i=0} 
\left(\frac{1}{t_{i+1}-t_i}
\int_{t_i}^{t_{i+1}}    
\cD_K(\theta)_t\Sigma^\theta_t\,\d t
\right)
\left(\Sigma_{t_i}^{\theta}
\right)^{-1}\mathds{1}_{[t_i,t_{i+1})}(t),
\\
\cD^\pi_V(\theta)_t&= \sum^{N-1}_{i=0} 
\left(\frac{1 }{t_{i+1}-t_i}
\int_{t_i}^{t_{i+1}} 
(V_{t_i}   
\cD_V(\theta)_t
+\cD_V(\theta)_t V_{t_i})
\, \d t
\right)
\mathds{1}_{[t_i,t_{i+1})}(t).
\end{split}
\end{align}
To simplify the notation, 
for each Euclidean space $E$,
let $\cP\cC_\pi(E)$ be   
 the space of piecewise constant functions
 $f:[0,T]\to E$   on  $\pi$,
let $\Pi^\pi:L^2(0,T;E) \to \cP\cC_\pi(E) $ be
such that for all $f\in L^2(0,T;E)$, 
   $\Pi^\pi(f)_t \coloneqq \sum^{N-1}_{i=0} 
  \left( \frac{1}{t_{i+1}-t_i}\int_{t_i}^{t_{i+1}}
f_t\, \d t\right)
    \mathds{1}_{[t_i,t_{i+1})}(t)$
  for all $t\in [0,T]$,
  and let  $\cT^\pi: C([0,T];E)\to \cP\cC_\pi(E) $ 
   be  such that for all $f\in C([0,T];E)$, 
   $\cT^\pi(f)_t \coloneqq \sum^{N-1}_{i=0} 
 f_{t_i}
    \mathds{1}_{[t_i,t_{i+1})}(t)$
  for all $t\in [0,T]$.
  Note that 
$\Pi^{\pi_m} $
 is  the  orthogonal projection with respect to the $\|\cdot\|_{L^2}$ norm, 
 and hence is 
$1$-Lipschitz continuous with respect to the $\|\cdot\|_{L^2}$ norm.
Moreover,  by \eqref{eq:D_K_V_finite},  for all $\theta\in \Theta^\pi$,
 \begin{align}
 \label{eq:D_K_V_projection}
 \begin{split}
\cD^\pi_K(\theta)&= 
\Pi^\pi\left(\cD_K(\theta)\Sigma^\theta
\left(\cT^\pi\big(\Sigma^\theta \big)\right)^{-1}
 \right),
\q 
\cD^\pi_V(\theta) = 
\Pi^\pi\left(
\cT^\pi(V)
\cD_V(\theta)
+\cD_V(\theta)\cT^\pi(V)
 \right).
\end{split}
\end{align}
The definition
of $( \cD^\pi_K,\cD^\pi_V)$ 
in \eqref{eq:D_K_V_projection} can be naturally extended to all $\theta\in L^2(0,T; \sR^{k\t d})\t C([0,T]; \sS^k_+) $.
Note that $\Sigma^\theta$ is pointwise invertible due to   $\sE[\xi_0\xi_0^\top]\succ 0$ (see Lemma \ref{lemma:Sigma_bdd}).

We are now ready to  verify  (H.\ref{assum:D_pi})
for \eqref{eq:D_K_V_projection}.
Let  $\theta \in L^2(0,T; \sR^{k\t d})\t C([0,T]; \sS^k_+) $, 
 $(\pi_m)_{m\in \sN}\subset \mathscr{P}_{[0,T]}$
 be 
such that $\lim_{m\to \infty} |\pi_m|=0$,
and     $(\theta^{m})_{m\in \sN}\subset \Theta$  
be such that 
$\theta^{m} \in \Theta^{\pi_m}$ for all $m\in \sN$
and 
  $\lim_{m\to \infty}\|\theta^{m}-\theta\|_{L^2\t L^\infty}=0$.
 Then for all $m\in \sN$,
by the Lipschitz continuity of $\Pi^{\pi_m}$,
\begin{align}
 &\|\cD^{\pi_m}_K(\theta^m)-\cD_K(\theta)\|_{L^2}
 \nb
\\
&\q \le 
\|\cD^{\pi_m}_K(\theta^m)-\Pi^{\pi_m}
\left(\cD_K(\theta)\right)
\|_{L^2}
+\|\Pi^{\pi_m} \left(\cD_K(\theta)\right)
-\cD_K(\theta)\|_{L^2}
\nb
\\
&\q \le 
\left\|
\cD_K(\theta^m)\Sigma^{\theta^m}
\left(\cT^{\pi_m}\big(\Sigma^{\theta^m} \big)\right)^{-1}
-\cD_K(\theta)
\right\|_{L^2}
+\|\Pi^{\pi_m} \left(\cD_K(\theta)\right)
-\cD_K(\theta)\|_{L^2}.
\label{eq:D_K_pi_conv}
\end{align}
The density of $(\cP\cC_\pi(\sR^{k\t d}))_{m\in \sN}$ in $L^2(0,T;\sR^{k\t d})$ shows that the second term of \eqref{eq:D_K_pi_conv}
tends to zero as $m\to \infty$.
Standard stability results of 
\eqref{eq:lyapunov_reg} and \eqref{eq:lq_sde_K_reg_cov}
(see, e.g., Lemma \ref{lemma:laypunov_difference})
 show that 
$\lim_{m\to \infty}\| P^{\theta^m} - P^\theta\|_{L^\infty}=0$
and 
$\lim_{m\to \infty}\|\Sigma^{\theta^m}- \Sigma^\theta\|_{L^\infty}=0$.
Thus 
by  (H.\ref{assum:coefficident}) and  \eqref{eq:D_K},
$\lim_{m\to \infty}\|\cD_K(\theta^m)-\cD_K(\theta)\|_{L^2}=0$.
Moreover,  as  $\inf_{  t\in [0,T]}\lambda_{\min}(\Sigma^{\theta}_t)>0$ (see  Lemma \ref{lemma:Sigma_bdd}),
$\Sigma^{\theta^m}
\left(\cT^{\pi_m}\big(\Sigma^{\theta^m} \big)\right)^{-1}
$ tends to the identity function in $L^\infty$ as $m\to \infty$.
Consequently, the first term of \eqref{eq:D_K_pi_conv}
tends to zero as $m\to \infty$, which proves  
$\lim_{m\to \infty} \|\cD^{\pi_m}_K(\theta^m)-\cD_K(\theta)\|_{L^2}=0$. 

We then prove  the convergence of 
$(\cD^{\pi_m}_V(\theta^m))_{m\in \sN}$.
Note  that 
for each $m\in \sN$ and 
Euclidean space $E$, 
$\|\Pi^{\pi_m}(f)\|_{L^\infty}\le \|f\|_{L^\infty}$
if $f\in L^\infty( 0,T;  E )$,
and 
$\lim_{m\to \infty}
 \|\Pi^{\pi_m}(f)-f\|_{L^\infty}=0$
if  $f\in C([ 0,T] ;  E )$. 
The same property also holds for the operator $\cT^{\pi_m}$. 
Then
for all $m\in \sN$,
 \begin{align}
\label{eq:D_V_pi_conv}
 \begin{split}
&\|\cD^{\pi_m}_V(\theta^m)-\cD^{\rm bw}(\theta)\|_{L^\infty}
\\
&\q  \le 
\|\cD^{\pi_m}_V(\theta^m)-\Pi^{\pi_m} (\cD^{\rm bw}_V(\theta))\|_{L^\infty}
+\|\Pi^{\pi_m} (\cD^{\rm bw}_V(\theta))
-\cD^{\rm bw}_V(\theta)\|_{L^\infty}.
\end{split}
\end{align}
By 
the   continuity of $D$, $R$, $\bar{V}$ and $V$,
$ \cD^{\rm bw}_V(\theta)\in C([0,T];\sS^k_+)$
(cf.~\eqref{eq:D_V} and \eqref{eq:gradient_BW}),
and hence the second term in 
\eqref{eq:D_V_pi_conv} tends to zero as $m\to \infty$.
To show the first term tends to zero, 
 by \eqref{eq:gradient_BW} and  \eqref{eq:D_K_V_projection},
it suffices to prove    
$\lim_{m\to \infty}
 \|\cD_V(\theta^m)-\cD_V(\theta)\|_{L^\infty}=0$. 
This follows directly from 
the facts that
$\lim_{m\to \infty}\| P^{\theta^m} - P^\theta\|_{L^\infty}=0$,
$\lim_{m\to \infty}\|V^{m}-V\|_{ L^\infty}=0$
and 
$V\in C([0,T]; \sS^k_+) $. 
This  verifies   (H.\ref{assum:D_pi})
for \eqref{eq:D_K_V_projection}.
\end{proof}

\section{Numerical experiments} 
\label{sec:numerical}

In this section, we  test the theoretical findings 
 through a numerical experiment on an   
exploratory   LQC  problem 
arising from mean-variance portfolio
selection.
%
Our experiments confirm  that  the proposed  iteration 
\eqref{eq:NPG_discrete_finite}
converges linearly to the optimal policy.
They also show that  
conventional PG methods 
exhibit a degraded performance for small timesteps in the policy updates, while our algorithm
demonstrates robustness across different step sizes.

 \paragraph{Problem setup.} 
We minimise the  following cost  
 $\cC:\Theta\to \sR$ (cf.~\eqref{eq:reg_cost_closed_loop}):
  \begin{equation}
  \label{eq:cost_numerical}
  \cC(\theta)=\sE\left[\frac{1}{2} \mu \, (X^\theta_T)^2 + {\rho} \int_0^T  \cH(\nu^\theta_t(X^\theta_t)\| \ol{\mathfrak{m}}_t) \, \d t\right],
\end{equation}
   where
      $\ol{\mathfrak{m}}_t=\cN(0,\bar{V})$ with $\bar{V}\in \sS^3_+$,
and
    for each $\theta\in \Theta$, 
   $X^\theta \in\cS^2(0,T;\sR)$ satisfies  for all $ t \in[0, T]$,  
\begin{align}
\label{eq:state_numerical}
    \begin{split}
\mathrm{d} X_t=
\int_{\mathbb{R}^3}\left(B_t a\, \nu^\theta_t(X_t; \d a)\right) 
  \mathrm{d} t+
  \bigg(\int_{\mathbb{R}^3}\sum_{j=1}^3\Big( D^{(j)} a\Big)^2  \, \nu^\theta_t(X_t; \d a)  \bigg)^{\frac{1}{2}}
 \mathrm{d} W_t,\quad 
  X_0=\xi_0,
    \end{split}
\end{align}
for some $B:[0,T]\to \sR^{1\t 3}$ and $D^{(j)} \in \sR^{1\t 3}$, $j=1,2,3$.
The coefficients are chosen as follows: 
  $T=1$, 
$\mu =0.5$, $\rho=0.01$, 
$\bar{V}= 0.1 I_3$, 
$\xi_0\sim\mathcal{N}(0.5,0.01)$, 
$B_t= (0.4, 0.8,0.4) +0.2 \sin(2\pi t)\mathbf{1}_{3}  $ for all $t\in [0,T]$,
and 
$D = \left(\begin{smallmatrix} 
D^{(1)}\\
D^{(2)}\\
D^{(3)}
\end{smallmatrix}\right) $ with  
$D^\top D = 
\left(\begin{smallmatrix}
0.5 & 0.25 & -0.125\\
0.25 & 1 & -0.25\\
-0.125 & -0.25 & 0.5
\end{smallmatrix}\right)
$. 
Note that    $D^\top D\in \sS^3_+$, 
and hence
(H.\ref{assum:nondegeneracy}) holds for all $\rho\ge 0$
(see  \cite{zhou2000continuous} and Remark \ref{rmk:solvability_riccati}).

The   problem 
\eqref{eq:cost_numerical}-\eqref{eq:state_numerical}
arises from an  
  exploratory mean-variance portfolio selection problem,
  where the agent allocates their wealth among three risky assets by sampling from  the   policy 
  $\nu^\theta$ (see \cite{wang2020continuous}). Indeed, as illustrated at the end of Section \ref{sec:mesh-independence}, 
for each  $\theta=(K,V)\in \Theta$,  $\cC(\theta)$  can be approximated by replacing  
  \eqref{eq:state_numerical} with the following   dynamics:
$  X_0=\xi_0$, and for all $t\in [0,T]$,
 \begin{align}
\label{eq:state_numerical_randomised}
    \begin{split}
\mathrm{d} X_t=
B_t \left( K_tX_t+V^{\frac{1}{2}}_t\xi_t\right)
  \mathrm{d} t+
\sum_{j=1}^3 D^{(j)} \left( K_tX_t+V^{\frac{1}{2}}_t\xi_t\right) 
 \mathrm{d} W^{(j)}_t
    \end{split}
\end{align}
with 
$\xi_t=\sum_{i=1}^n \zeta_i \mathds{1}_{[t_i,t_{i+1})}(t)$,
where  $(W^{(j)})_{j=1}^3$ are independent Brownian motions, 
$(\zeta_i)_{i=1}^n$ are independent standard normal random vectors, 
and $(t_i)_{i=1}^n$ is a sufficiently fine time mesh.

\paragraph{Linear convergence.} 
 We first 
 implement   \eqref{eq:NPG_discrete_finite}   on the  uniform time mesh   $\pi_c$ with  mesh size $1/128$,
 and examine its convergence.
The scheme    is initialised with 
$K^0\equiv (1/3,1/3,1/3)$ and $V^0 \equiv 0.1D^\top D$.
 For each $ n\in \sN_0$, given    $ {\theta^n} \subset \Theta^{\pi_c}$, 
 we simulate $10^5$ independent   trajectories of 
 \eqref{eq:state_numerical_randomised} (with $\theta= \theta^n$)
 using the Euler--Maruyama method on the mesh $\pi_c$,
 evaluate the approximate value $\widehat{\cC}(\theta^n)$ 
 and  state covariance $\widehat{\Sigma}^n$
using the empirical distribution of  these sample paths, 
and compute an approximate gradient 
$(\widehat{{\nabla_{\theta^n_i}\cC}})_{i=0}^{127}$ using automatic differentiation. 
The  iterate $\theta^{n}$ is updated
by  \eqref{eq:NPG_discrete_finite} 
with
  $\widehat{\Sigma}^n$, $\widehat{{\nabla_{\theta^n}\cC}}$
 and the  stepsize $\tau=0.01$.
  The performance  of the scheme is measured by the errors $(\widehat{\cC}(\theta^n)-\cC^\star)_{n\in \sN_0}$,
where $\cC^\star$ is the optimal cost of \eqref{eq:cost_numerical} 
obtained by   Riccati equations. 
Further implementation details are given in Appendix  \ref{sec:experiment_detail}. 
 
 Figure \ref{fig:pgm} (left) exhibits the decay of 
 $(\widehat{\cC}(\theta^n)-\cC^\star)_{n\in \sN_0}$
with respect to the number of iterations,   where the solid line and the shaded area  indicate the sample mean and the spread   over 10 repeated experiments, respectively.
 It clearly shows the linear convergence of \eqref{eq:NPG_discrete_finite}, 
 as indicated in  Theorems    \ref{thm:linear_conv}  and \ref{thm:mesh_independent_general}. 
  {The seemingly higher noise for larger iteration numbers results from the small errors in this case, so that the 
 fluctuations appear larger on the log scale. The variance could be reduced by increasing the number of samples.}
 
 \paragraph{Robustness in action frequency.} 
We then compare the performance of  \eqref{eq:NPG_discrete_finite} 
with a standard PG method 
for different policy discretisation timescales. 
The former  (termed ``scaled PG")  scales the gradients with the discretisation
mesh size, while the latter  (termed ``unscaled PG")  updates the policy with unscaled gradients. 
More precisely, 
let $\theta^0=(K^0,V^0)$ be a fixed initial guess given  as above,
 and $\pi_m=\{i\frac{1}{m}\}_{i=0}^m $, $m\in \{8,16,32,64,128\}$ be a family of   time meshes.
 For each $m\in \{8,16,32,64,128\}$, 
 the scaled PG method  generates the iterates $(\theta^{\pi_m,n})_{n\in \sN_{ 0}}\subset  \Theta^{\pi_m}$
according to  \eqref{eq:NPG_discrete_finite} with  $\tau=0.01$ and $\Delta_i=1/m$,
where  the required gradients  for each iteration  are computed as above.
The unscaled PG  method follows  \eqref{eq:NPG_discrete_finite} with
$\tau= 0.08$ and
 $\Delta_i=1$ for all $m$. 
Here,  a   larger stepsize has been adopted for the unscaled PG  method
so that the two algorithms coincide  for the coarsest mesh $\pi_8$. 

  Figure \ref{fig:pgm} (right) compares, for different discretisation timescales,
  the   numbers of required iterations 
  $N^{\pi_m}(0.01)$
  for both schemes to achieve an accuracy of $\epsilon=0.01$ (cf.~\eqref{eq:N_pi_eps}). 
   One can observe clearly   that the number of required iterations for the unscaled PG  method   exhibits a linear growth in the number of action time points. 
 In constrast,  
the     number of   iterations for the scaled PG method remains constant       for all     meshes.
   This 
   confirms the theoretical result  in Theorem \ref{thm:mesh_independent_general},
   and 
   shows that the scaled PG method outperforms conventional PG methods for fine meshes.

 \begin{figure}[H]
\centering

\includegraphics[trim=5 5 30 25, clip,  width=0.49\textwidth]{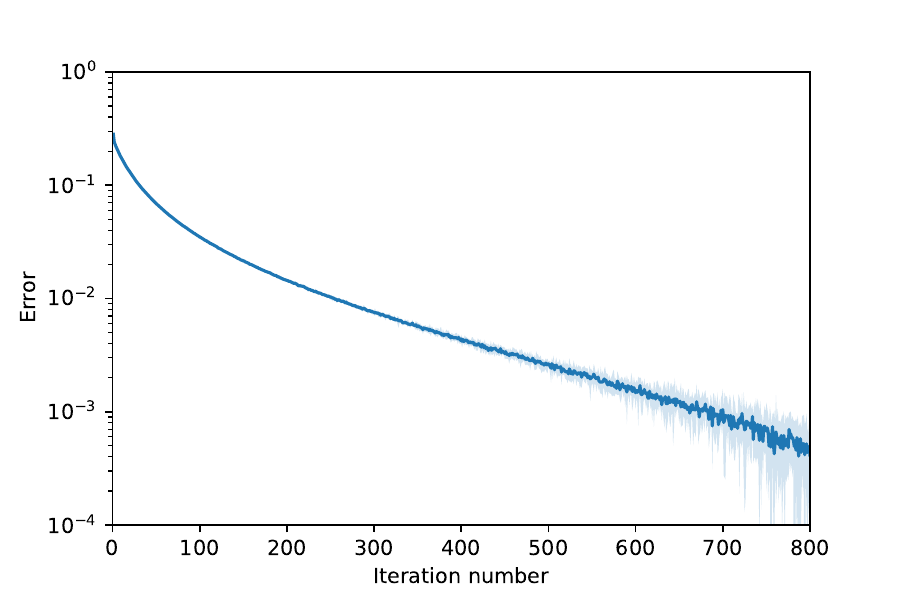}
\hfill
\includegraphics[trim=5 5 30 25, clip,  width=0.49\textwidth]{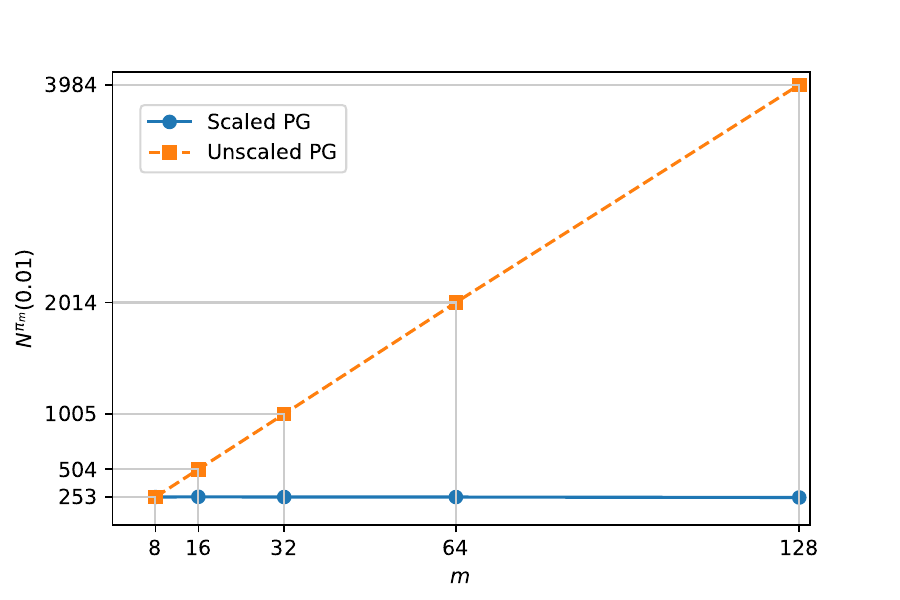}
 \caption{Convergence and robustness  of the PG method \eqref{eq:NPG_discrete_finite}.}
\label{fig:pgm}
\end{figure}

\appendix

\section{Proofs of technical results}
\label{appendix:technical}
 
 The following lemma establishes  the well-posedness of stochastic differential equations, 
whose coefficients   
are  Lipschitz continuous in state  with time-dependent Lipschitz constants.
The proof follows essentially  
the lines of 
  Theorems 3.2.2
  and   3.3.1 (Method 2) in \cite{zhang2017backward},
  and hence is omitted.

\begin{Lemma}
\label{lemma:sde_wp}
Let   
$T>0$,
 $(\Om,\cF,\sF,\sP)$ be a filtered probability space
 satisfying the usual condition,
  $b:\Om\t [0,T]\t \sR^d\to \sR^d$
 and ${\sigma}:\Om\t [0,T]\t \sR^d\to \sR^{d\t d}$ 
  be  progressively measurable functions such that $b_\cdot(\cdot,0)  \in L^1(\Om\t [0,T];\sR^d)$
  and 
   $\sigma_\cdot( \cdot,0)  \in L^2(\Om\t [0,T];\sR^{d\t d})$.
   Assume that 
  there exists  
  $A\in L^1(0,T; \sR )$ and 
    $C\in L^2(0,T; \sR )$ such that  
    for all $(\om,t)\in\Om\t [0,T]$ and $x,x'\in \sR^d$,
$|b_t(\om,x)-b_t(\om,x')|\le |A_t| |x-x'|$ and 
$|\sigma_t(\om,x)-\sigma_t(\om,x')|\le |C_t| |x-x'|$.
Then for all $\xi_0\in L^2(\cF_0;\sR^d)$, 
there exists a  unique strong solution $X\in \cS^2(0,T;\sR^d)$
to
the following equation
\begin{equation}
\d X_t= b_t(X_t)\, \d s +\sigma_t(X_t)\, \d W_t, 
\q t\in [0,T]; 
\quad X_0 = \xi_0.
\end{equation}
\end{Lemma}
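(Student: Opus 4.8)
The plan is to prove uniqueness by an It\^o--Gronwall argument and existence by a Picard iteration in $\cS^2(0,T;\sR^d)$, the only subtlety being that the Lipschitz moduli $|A_t|$ and $|C_t|$ are merely $L^1$ and $L^2$ in time rather than uniformly bounded. The unifying device is the continuous nondecreasing function $\phi(t)\coloneqq \int_0^t(|A_s|+|C_s|^2)\,\d s$, which satisfies $\phi(0)=0$ and $\phi(T)<\infty$ by hypothesis; all Gronwall-type estimates will be driven by the Stieltjes measure $\d\phi$ rather than by Lebesgue measure.

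For uniqueness, given two solutions $X,X'\in\cS^2(0,T;\sR^d)$, I would apply It\^o's formula to $t\mapsto|X_t-X'_t|^2$, take expectations (the stochastic integral is a true martingale thanks to the $\cS^2$ bound), and use the Lipschitz estimates to obtain
\[
\sE[|X_t-X'_t|^2]\le \int_0^t(2|A_s|+|C_s|^2)\,\sE[|X_s-X'_s|^2]\,\d s,\q t\in[0,T].
\]
Since $2|A|+|C|^2\in L^1(0,T;\sR)$, Gronwall's inequality forces $\sE[|X_t-X'_t|^2]=0$ for all $t$, hence $X=X'$.

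For existence, set $X^0_t\equiv\xi_0$ and define $X^{n+1}_t=\xi_0+\int_0^t b_s(X^n_s)\,\d s+\int_0^t\sigma_s(X^n_s)\,\d W_s$ (each $X^{n+1}\in\cS^2$ by induction). The integrability of $b_\cdot(\cdot,0)$ and $\sigma_\cdot(\cdot,0)$ guarantees $\delta^0_T\coloneqq\sE[\sup_{t\le T}|X^1_t-X^0_t|^2]<\infty$. The key step is the recursive bound for $\delta^n_t\coloneqq\sE[\sup_{s\le t}|X^{n+1}_s-X^n_s|^2]$: the diffusion difference is controlled via the Burkholder--Davis--Gundy inequality by $\int_0^t|C_s|^2\delta^{n-1}_s\,\d s$, while for the drift, since only $|A|\in L^1$, I would apply the Cauchy--Schwarz inequality with respect to the measure $|A_s|\,\d s$, namely $(\int_0^t|A_s||X^n_s-X^{n-1}_s|\,\d s)^2\le\|A\|_{L^1}\int_0^t|A_s||X^n_s-X^{n-1}_s|^2\,\d s$. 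Combining gives $\delta^n_t\le C'\int_0^t\delta^{n-1}_s\,\d\phi(s)$ with $C'$ depending on the BDG constant and $\|A\|_{L^1}$. Iterating this Volterra inequality along $\d\phi$ yields $\delta^n_T\le\delta^0_T(C'\phi(T))^n/n!$, so $\sum_n(\delta^n_T)^{1/2}<\infty$ and $(X^n)_{n}$ is Cauchy in $\cS^2$; its limit $X$ solves the equation after passing to the limit under the integrals, which is justified by the Lipschitz continuity of $b$ and $\sigma$.

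The main obstacle is precisely the $L^1$-only drift modulus, which rules out the textbook estimate and forces the Cauchy--Schwarz factorization against $|A|\,\d s$; correspondingly, the iterated-integral decay must be taken with respect to the Stieltjes measure $\d\phi$, and one must check that the resulting factorial bound still gives summability, which it does because $\phi(T)<\infty$.
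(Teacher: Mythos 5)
The paper offers no written proof of this lemma: it is stated with a pointer to Theorems 3.2.2 and 3.3.1 (Method 2) of \cite{zhang2017backward}, so the comparison is with that standard argument. Your proposal is precisely the adaptation those citations intend, and it is correct in all essentials: uniqueness by It\^o plus Gronwall with the $L^1$-integrable rate $2|A|+|C|^2$, and existence by Picard iteration in $\cS^2$, where the only genuinely nonstandard feature --- the merely $L^1$/$L^2$ time-dependence of the Lipschitz moduli --- is handled exactly right, via the Cauchy--Schwarz factorisation $(\int_0^t |A_s||\Delta_s|\,\d s)^2\le \|A\|_{L^1}\int_0^t |A_s||\Delta_s|^2\,\d s$ for the drift, BDG for the diffusion, and factorial decay of the iterates with respect to the clock $\phi(t)=\int_0^t(|A_s|+|C_s|^2)\,\d s$. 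Since $\d\phi$ has a Lebesgue density, the simplex identity behind $\delta^n_T\le \delta^0_T(C'\phi(T))^n/n!$ is ordinary calculus, and summability follows from $\phi(T)<\infty$. The one stylistic divergence from the cited route: Zhang's ``Method 2'' partitions $[0,T]$ into finitely many subintervals on which $\int(|A_s|+|C_s|^2)\,\d s$ is small and runs a contraction on each, patching the solutions; your single global iteration with the $\d\phi$-driven factorial bound buys the same conclusion in one pass, at the cost of a slightly more careful recursive estimate.

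Two points need repair or at least explicit acknowledgement. First, in the uniqueness step the stochastic integral is a priori only a local martingale: its integrand is bounded by $|C_t|\sup_s|X_s-X'_s|^2$, and the $\cS^2$ hypothesis controls second, not fourth, moments; the standard fix is to stop at $\tau_n=\inf\{t:|X_t-X'_t|\ge n\}$, apply Gronwall to $t\mapsto \sE[|X_{t\wedge\tau_n}-X'_{t\wedge\tau_n}|^2]$, and let $n\to\infty$. Second, and more substantively, your claim that $\delta^0_T<\infty$ (and, likewise, the induction $X^{n+1}\in\cS^2$) ``follows from the integrability of $b_\cdot(\cdot,0)$'' is not correct under the hypothesis as literally stated: $b_\cdot(\cdot,0)\in L^1(\Om\t[0,T];\sR^d)$ gives only $\sE[\int_0^T|b_t(0)|\,\d t]<\infty$, whereas your estimates require the mixed norm $\sE[(\int_0^T|b_t(\cdot,0)|\,\d t)^2]<\infty$. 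Indeed, with $\sigma\equiv0$, $A\equiv 0$, $d=1$ and $b_t(\om,x)=\eta(\om)$ for an $\cF_0$-measurable $\eta\ge 0$ with $\sE[\eta]<\infty$ but $\sE[\eta^2]=\infty$, the solution $X_t=\xi_0+t\eta$ is not in $\cS^2$, so the lemma's conclusion itself fails under the literal reading. This is a defect of the lemma's phrasing rather than of your argument: the hypothesis must be read in the $L^2_\om(L^1_t)$ sense (which is exactly the condition in Zhang's Theorem 3.2.2, and which is what actually holds in the paper's application in Proposition \ref{prop:state_wp}, where $\int_0^T|\Phi^{\mathfrak{m}}_t(\cdot,0)|\,\d t\le \|B\|_{L^2}(\int_0^T\int_{\sR^k}|a|^2\,\mathfrak{m}_t(\d a)\,\d t)^{1/2}$ is square-integrable in $\om$). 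With that reading, your base bound $\delta^0_T<\infty$ and the rest of the proof go through.
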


\begin{Proposition}
\label{prop:state_wp}
Suppose (H.\ref{assum:coefficident}\ref{assum:integrability}) holds.
Then 
\begin{enumerate}[(1)]
\item \label{eq:state_wp_open}
  for all    $\mathfrak{m}\in \cA$,
\eqref{eq:reg_state_open_loop} admits a unique  strong solution $X^\mathfrak{m}\in \cS^2(0,T;\sR^d)$.
\item \label{eq:state_wp_close}
for all $\nu^\theta\in \cV$, 
\eqref{eq:reg_state_closed_loop} admits a unique  strong solution 
$X^\theta\in \cS^2(0,T;\sR^d)$.
\end{enumerate}
\end{Proposition}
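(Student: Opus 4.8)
The plan is to deduce both claims from the general well-posedness result Lemma \ref{lemma:sde_wp}: in each case I would first write the drift and diffusion coefficients explicitly in terms of the data, and then verify the hypotheses of Lemma \ref{lemma:sde_wp}, namely progressive measurability, the integrability $b_\cdot(\cdot,0)\in L^1(\Om\t[0,T];\sR^d)$ and $\sigma_\cdot(\cdot,0)\in L^2(\Om\t[0,T];\sR^{d\t d})$, and Lipschitz continuity in the state with time-dependent Lipschitz constants lying in $L^1(0,T)$ for the drift and in $L^2(0,T)$ for the diffusion. By \eqref{eq:reg_coefficient} the open-loop drift in \eqref{eq:reg_state_open_loop} is $\Phi_t(x,\mathfrak{m}_t)=A_tx+B_t\bar a_t$ with $\bar a_t=\int_{\sR^k}a\,\mathfrak{m}_t(\d a)$, which is affine in $x$ with Lipschitz constant $\|A_t\|_2\in L^1(0,T)$; for the closed-loop case \eqref{eq:reg_state_closed_loop}, Lemma \ref{lemma:representation_KV} gives the affine drift $(A_t+B_tK_t)x$ with Lipschitz constant $\|A_t+B_tK_t\|_2$, which lies in $L^1(0,T)$ since $A\in L^1$ and $B,K\in L^2$. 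The drift value at $x=0$ is integrable in both cases, being $B_t\bar a_t$ (controlled by the moment bound defining $\cA$) respectively $0$.

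The crux is the diffusion coefficient. Writing out \eqref{eq:reg_coefficient} one has $\Gamma_t(x,\mathfrak{m}_t)=M_t(x)^{1/2}$ with $M_t(x)=(C_tx+D_t\bar a_t)(C_tx+D_t\bar a_t)^\top+N_t$, where $N_t=D_t\,\mathrm{Cov}(\mathfrak{m}_t)\,D_t^\top\in\ol{\sS^d_+}$ is independent of $x$; by Lemma \ref{lemma:representation_KV} the closed-loop coefficient has the same structure, with $C_tx+D_t\bar a_t$ replaced by $(C_t+D_tK_t)x$ and $N_t=D_tV_tD_t^\top$. Thus, after the affine substitution $v=v(x)$, both diffusions reduce to the map $v\mapsto(vv^\top+N)^{1/2}$. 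The main obstacle is that $(\cdot)^{1/2}$ is \emph{not} globally Lipschitz on $\ol{\sS^d_+}$, so one cannot treat $M_t(x)^{1/2}$ by naive composition. I would therefore isolate and prove the auxiliary estimate: there is a dimensional constant $c_d>0$ with $\|(vv^\top+N)^{1/2}-(ww^\top+N)^{1/2}\|_2\le c_d|v-w|$ for all $v,w\in\sR^d$, \emph{uniformly} over $N\in\ol{\sS^d_+}$.

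To prove this estimate I would first assume $N\succ0$, so that $g(v)=(vv^\top+N)^{1/2}$ is smooth and $\succ0$, and differentiate the identity $g(v)^2=vv^\top+N$ in a direction $h$: the derivative $L=D_vg[h]$ solves the Lyapunov equation $Lg(v)+g(v)L=hv^\top+vh^\top$. Diagonalising $g(v)=\sum_i\mu_i\phi_i\phi_i^\top$ gives $L_{ij}=(h_iv_j+v_ih_j)/(\mu_i+\mu_j)$ in the eigenbasis, and the structural fact $\mu_i^2=(g(v)^2)_{ii}=v_i^2+N_{ii}\ge v_i^2$ yields $|v_i|\le\mu_i$, whence $|L_{ij}|\le|h_i|+|h_j|$ and $\|L\|_F^2\le\sum_{ij}(|h_i|+|h_j|)^2\le 4d|h|^2$. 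This bounds the derivative of $g$ by $2\sqrt d$ independently of $N$; the general case $N\succeq0$ follows by applying the bound to $N+\eta I_d$ and letting $\eta\downarrow0$, using continuity of the square root. Consequently $x\mapsto\Gamma_t(x,\cdot)$ is Lipschitz with constant $c_d\|C_t\|_2$ (open loop), respectively $c_d\|C_t+D_tK_t\|_2$ (closed loop), and both lie in $L^2(0,T)$ since $C\in L^2$, $D\in L^\infty$ and $K\in L^2$.

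It then remains to check the residual integrability at $x=0$, where the diffusion equals $(D_t\Sigma^a_tD_t^\top)^{1/2}$ with $\Sigma^a_t=\int_{\sR^k}aa^\top\mathfrak{m}_t(\d a)$ (open loop), respectively $(D_tV_tD_t^\top)^{1/2}$ (closed loop). Hence $\|\sigma_t(0)\|_F^2=\tr(D_t\Sigma^a_tD_t^\top)\le\|D_t\|_2^2\int_{\sR^k}|a|^2\,\mathfrak{m}_t(\d a)$, respectively $\le d\,\|D_t\|_2^2\,\|V_t\|_2$; taking expectations and using $D\in L^\infty$ with the moment bound of $\cA$ (respectively $V\preceq\tfrac1\eps I_k$) shows $\sigma_\cdot(\cdot,0)\in L^2(\Om\t[0,T];\sR^{d\t d})$. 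Progressive measurability holds because the coefficients depend on $\om$ only through the $\cF_t$-measurable quantities $\bar a_t,\Sigma^a_t$ (open loop) or are deterministic in $(t,x)$ (closed loop). With all hypotheses verified, Lemma \ref{lemma:sde_wp} yields the unique strong solution $X\in\cS^2(0,T;\sR^d)$ in each case. I expect the uniform-in-$N$ Lipschitz bound for the matrix square root to be the only substantive step, the remaining verifications being routine bookkeeping with (H.\ref{assum:coefficident}\ref{assum:integrability}).
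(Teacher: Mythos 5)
Your proposal is correct, and its skeleton is exactly the paper's: both items are reduced to Lemma \ref{lemma:sde_wp} by checking progressive measurability, integrability of the coefficients at $x=0$, and Lipschitz continuity in the state with time-dependent constants in $L^1(0,T)$ for the drift and $L^2(0,T)$ for the diffusion. The genuinely different ingredient is how you obtain the Lipschitz bound for the square-root diffusion, which is indeed the only substantive step. The paper embeds $\Gamma_t(x,m)$ as the top-left block of the matrix absolute value of the $2d\t 2d$ matrix with first block row $\big(M_{m,t}(x)\ \ N_{m,t}\big)$ and zero second row, and then invokes the known Frobenius-norm Lipschitz continuity of $M\mapsto |M|_{\rm mat}=(MM^\top)^{1/2}$ with constant $\sqrt{2}$ (Araki's inequality, cited externally), giving $|\Gamma_t(x,m)-\Gamma_t(x',m)|\le \sqrt{2}\,|C_t|\,|x-x'|$. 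You instead prove from scratch that $v\mapsto (vv^\top+N)^{1/2}$ is $2\sqrt{d}$-Lipschitz uniformly over $N\in \ol{\sS^d_+}$, by differentiating $g(v)^2=vv^\top+N$ for $N\succ 0$, solving the resulting Sylvester equation $Lg(v)+g(v)L=hv^\top+vh^\top$ in the eigenbasis of $g(v)$, using the structural inequality $\mu_i^2=v_i^2+N_{ii}\ge v_i^2$ to bound $|L_{ij}|\le |h_i|+|h_j|$, and regularising $N+\eta I_d$ with $\eta\downarrow 0$; this computation is correct (the Sylvester operator is invertible since $g(v)\succ 0$, the Frobenius norm is orthogonally invariant, and the mean value inequality applies on the convex domain $\sR^d$). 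The paper's route is shorter and dimension-free but rests on a nontrivial cited inequality; yours is elementary and self-contained, exploiting the rank-one-plus-PSD structure, at the price of a dimension-dependent constant, which is immaterial here since all that matters is that the constant times $|C_t|$ (resp.\ $|C_t+D_tK_t|$) lies in $L^2(0,T)$. Incidentally, your mean--covariance decomposition $\Gamma_t(x,m)^2=(C_tx+D_t\bar a_t)(C_tx+D_t\bar a_t)^\top+D_t\,\mathrm{Cov}(m)\,D_t^\top$ is the correct form of the factorisation: the paper's $N_{m,t}=D_t\big(\int_{\sR^k} aa^\top\, m(\d a)\big)^{1/2}$ uses the uncentred second moment, so its displayed identity double-counts $D_t\bar a_t\bar a_t^\top D_t^\top$ --- a harmless slip, as in both versions the entire $x$-dependence sits in the first block, which is all the Lipschitz argument uses. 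Your remaining verifications (drift estimates via H\"older and the moment bound defining $\cA$, the bounds $\tr(D_t\Sigma^a_tD_t^\top)\le \|D_t\|_2^2\int_{\sR^k}|a|^2\,\mathfrak{m}_t(\d a)$ and $\tr(D_tV_tD_t^\top)\le d\,\|D_t\|_2^2\|V_t\|_2$, and measurability through $\bar a_t$ and $\mathrm{Cov}(\mathfrak{m}_t)$) coincide with the paper's up to trivial rearrangement.
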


\begin{proof}
Let 
$E=[0,T]\t \sR^k$.
We verify that 
the coefficients of \eqref{eq:reg_state_open_loop}
and \eqref{eq:reg_state_closed_loop}
satisfy 
 the conditions of Lemma \ref{lemma:sde_wp}.
   

To prove Item \ref{eq:state_wp_open},
let   $\mathfrak{m}\in \cA$ be given, and 
define 
$\Phi^\mathfrak{m}:\Om\t [0,T]\t \sR^d\to \sR^d$ and 
$\Gamma^\mathfrak{m}:\Om\t [0,T]\t \sR^d\to \ol {\sS^d_+}$ 
such that for all $(\om,t,x)\in \Om\t [0,T]\t \sR^d$, 
$\Phi^\mathfrak{m}_t(\om,x)= \Phi_t(x, \mathfrak{m}_t(\om) )$ 
and 
$\Gamma^\mathfrak{m}_t(\om,x)= \Gamma_t(x, \mathfrak{m}_t(\om) )$,
with $\Phi$ and $\Gamma$  defined in \eqref{eq:reg_coefficient}.
By Fubini's theorem and  H\"{o}lder's inequality,
\begin{align*}
\sE\left[\int_0^T|\Phi^\mathfrak{m}_t(\cdot,0)| \right]\, \d t
&\le \int_0^T\left(\sE\left[\int_{\sR^k} |a|\, \mathfrak{m}_t(  \d a)\right]|B_t|\right)\d t
\le \|B\|_{L^2}\left(\sE\left[\int_{E} |a|^2\, \mathfrak{m}_t( \d t, \d a )\right]\right)^{\frac{1}{2}}<\infty,
\\
\sE\left[\int_0^T|\Gamma^\mathfrak{m}_t(\cdot,0)|^2\, \d t \right]
&\le \widetilde{C} 
\|D\|_{L^\infty}^2 \sE\left[\int_{E} |a|^2\, \mathfrak{m}_t( \d t, \d a )\right]<\infty.
\end{align*}
For all $(\om,t)\in \Om\t [0,T]$, 
using   $\mathfrak{m}_t(\om)\in \cP(\sR^k)$, 
 $|\Phi^\mathfrak{m}_t(\om,x)-\Phi^\mathfrak{m}_t(\om,x')|\le |A_t| |x-x'|$
for all $x,x'\in\sR^d$.
To prove the Lipschitz continuity of $\Gamma^\mathfrak{m}$,
observe that  
 for all $(t,x,m)\in  [0,T] \t \sR^d \t \cP_2(\sR^k)$, 
$  \Gamma_t(x, m) = (M_{m,t}(x)M_{m,t}(x)^\top +N_{m,t} N_{m,t}^\top)^{1/2}$,
where 
$M_{m,t}(x) \coloneqq C_tx+D_t\int_{\sR^k}a\, m( \d a)$ 
and 
$N_{m,t} \coloneqq D_t\left(\int_{\sR^k} aa^\top \,  m( \d a)\right)^{1/2}$. 
This implies that
$$
\begin{pmatrix}
\Gamma_t(x, m) & 0_{d\t d}
\\
0_{d\t d} & 0_{d\t d}
\end{pmatrix}
=
\begin{pmatrix}
M_{m,t}(x)M_{m,t}(x)^\top +N_{m,t} N_{m,t}^\top & 0_{d\t d}
\\
0_{d\t d} & 0_{d\t d}
\end{pmatrix}^{\frac{1}{2}}
=
\left|
\begin{pmatrix}
M_{m,t}(x) & N_{m,t} 
\\
0_{d \t 1} & 0_{d\t 1}
\end{pmatrix}
\right|_{\rm mat},
$$
where 
$0_{m\t n}$ is $m\t n$ zero matrix,
and $|\cdot|_{\rm mat}$
is the matrix absolute value defined by 
$|M|_{\rm mat}=(M M^\top)^{1/2}$ for any matrix $M$.
Then,  for all $(t,m)\in  [0,T]  \t \cP_2(\sR^k)$ and 
$x,x'\in \sR^d$,
\begin{align}
\label{eq:Sigma_lipschitz}
\begin{split}
&| \Gamma_t(x, m)- \Gamma_t(x', m)|
\\
&\quad =
\left|
\begin{pmatrix}
\Gamma_t(x, m)- \Gamma_t(x', m) & 0_{d\t d}
\\
0_{d\t d} & 0_{d\t d}
\end{pmatrix}
\right|
=
\left|
\begin{pmatrix}
\Gamma_t(x, m) & 0_{d\t d}
\\
0_{d\t d} & 0_{d\t d}
\end{pmatrix}
-\begin{pmatrix}
\Gamma_t(x', m) & 0_{d\t d}
\\
0_{d\t d} & 0_{d\t d}
\end{pmatrix}
\right|
\\
&\quad =\left|
\left|
\begin{pmatrix}
M_{m,t}(x) & N_{m,t} 
\\
0_{d\t 1} & 0_{d\t 1}
\end{pmatrix}
\right|_{\rm mat}
-
\left|
\begin{pmatrix}
M_{m,t}(x') & N_{m,t} 
\\
0_{d\t 1} & 0_{d\t 1}
\end{pmatrix}
\right|_{\rm mat}
\right|
\\
&\quad \le 
\sqrt{2}
\left|
 \begin{pmatrix}
M_{m,t}(x) & N_{m,t} 
\\
0_{d\t 1} & 0_{d\t 1}
\end{pmatrix}
 -
 \begin{pmatrix}
M_{m,t}(x') & N_{m,t} 
\\
0_{d\t 1} & 0_{d\t 1}
\end{pmatrix}
 \right|
 =\sqrt{2}|M_{m,t}(x)-M_{m,t}(x')|,
 \end{split}
\end{align}
where the last inequality used the Lipschitz continuity of the matrix absolute value 
$|\cdot|_{\rm mat}$ (see \cite{araki1981inequality}). 
Therefore, 
by the definition of $M_{m,t}(x)$,
 for all $(\om,t)\in \Om\t [0,T]$ and $x,x'\in\sR^d$, 
$$
|\Gamma^\mathfrak{m}_t(\om,x)-\Gamma^\mathfrak{m}_t(\om,x')|
\le \sqrt{2}|M_{\mathfrak{m}_t(\om),t}(x)-M_{\mathfrak{m}_t(\om),t}(x')|
\le 
\sqrt{2} |C_t| |x-x'|.
$$
As $A\in L^1(0,T;\sR^{d\t d})$ and $C\in L^2(0,T;\sR^{d\t d})$,
the coefficients of \eqref{eq:reg_state_open_loop}
satisfy 
 the conditions of Lemma \ref{lemma:sde_wp},
 which subsequently implies the well-posedness 
of \eqref{eq:reg_state_open_loop}.
 
 To prove Item \ref{eq:state_wp_close},
let   $\nu^\theta\in \cV$ be given, and 
define 
$\Phi^\theta:  [0,T]\t \sR^d\to \sR^d$ and 
$\Gamma^\theta:  [0,T]\t \sR^d\to \ol {\sS^d_+}$ 
such that for all $(t,x)\in   [0,T]\t \sR^d$, 
$\Phi^\theta_t( x)= \Phi_t(x, \nu^\theta_t(x) )$ 
and 
$\Gamma^\theta_t(x)= \Gamma_t(x, \nu^\theta_t(x) )$,
with $\Phi$ and $\Gamma$  defined in \eqref{eq:reg_coefficient}.
Then by Lemma \ref{lemma:representation_KV},
 $\Phi^\theta_\cdot(0)=0$ and 
$\Gamma^\theta_\cdot(0)=DV_\cdot^{\frac{1}{2}}\in L^2(0,T;\sR^{d \t d})$.
Moreover, for all $(t,x)\in [0,T]\t \sR^d$, 
$|\Phi^\theta_t(x)-\Phi^\theta_t(x')| \le (|A_t|+|B_tK_t|)|x-x'|$
and by \eqref{eq:Sigma_lipschitz}, 
$|\Gamma^\theta_t(x)-\Gamma^\theta_t(x')| \le (|C_t|+|D_tK_t|)|x-x'|$. 
By  (H.\ref{assum:coefficident}\ref{assum:integrability})  and $K\in L^2(0,T;\sR^{d\t k})$,
$|A|+|BK|\in L^1(0,T;\sR)$ and 
$|C|+|DK|\in L^2(0,T;\sR)$.
This proves that 
the coefficients of \eqref{eq:reg_state_closed_loop}
satisfy 
 the conditions of Lemma \ref{lemma:sde_wp},
and hence  the well-posedness 
of \eqref{eq:reg_state_closed_loop}.
\end{proof}

\begin{proof}[Proof of Proposition \ref{prop:non_coercive}]
 For each $\eps>0$, let $X^\eps=X^{K^\eps}$ be such that $X^\eps_t =\exp(-\int_0^t (1+\eps-s)^{-1}\, \d s)=\frac{1+\eps-t}{1+\eps}$ for all $t\in [0,1]$.
 Thus  for all $\eps>0$,
 $ \cC(K^\eps)= \frac{1}{(1+\eps)^2}  $ but $\|K^\eps\|_{L^1}=\log\left(\frac{1+\eps}{\eps}\right)$.
 
 Now let $\tilde{K}^\eps = 0.5 K^\eps$,   and 
 $\tilde{X}^\eps$ be such that $\tilde{X}^\eps_t =\exp(-0.5\int_0^t (1+\eps-s)^{-1}\, \d s)=\sqrt{\frac{1+\eps-t}{1+\eps}}$ for all $t\in [0,1]$.
 Thus  for all $\eps>0$,
 $$
  \cC(\tilde{K}^\eps)= 0.5^2 \int_0^1 (K^\eps_t \tilde{X}^\eps_t)^2 \,\d t
  = \frac{0.25}{1+\eps} \int_0^1 (1+\eps-t)^{-1} \, \d t
  = \frac{0.25}{1+\eps}\log\left(\frac{1+\eps}{\eps}\right)>0.
  $$
  Hence 
  $  \cC(\tilde{K}^\eps)>\cC(\boldsymbol{0})$ and 
   $\lim_{\eps\to 0}\frac{ \cC(\tilde{K}^\eps)}{ \cC({K})}
   = \lim_{\eps\to 0} 0.25 (1+\eps)\log\left(\frac{1+\eps}{\eps}\right)=\infty $. 
 \end{proof}

\section{Experiment details}
\label{sec:experiment_detail}

This section   presents   additional details for the numerical experiments   in Section \ref{sec:numerical}. 

\paragraph{Optimal    cost.} 
  Let  $P^\star\in C([0,T];\sR)$ solve  the  following Riccati equation: 
for all $t\in [0,T]$, 
\begin{align}\label{eq:riccati_num}
\begin{split}
     \left(\tfrac{\d P}{\d t}\right)_t&-B_t\left(P_t\textstyle\sum_{j=1}^3 (D^{(j)})^\top D^{(j)} +\rho \bar{V}^{-1}\right)^{-1}B^\top_tP^2_t=0;
     \quad 
     P_T = \tfrac{\mu}{2}.
\end{split}
\end{align}
Then
 the optimal policy of  \eqref{eq:cost_numerical}-\eqref{eq:state_numerical}
  satisfies 
$\nu^\star_t(x)=\cN(K^\star_t x,V_t^\star)$
for all $(t,x)\in [0,T]\t \sR$,
where 
\begin{align*}
\begin{split}
     K^\star_t & = -\left(P_t^\star
     \textstyle\sum_{j=1}^3 (D^{(j)})^\top D^{(j)} +\rho \bar{V}^{-1}\right)^{-1}B^\top_t P_t^\star,
     \quad
     V^\star_t  = \rho\left(P_t^\star
     \textstyle\sum_{j=1}^3 (D^{(j)})^\top D^{(j)} +\rho \bar{V}^{-1}\right)^{-1}.
\end{split}
\end{align*}
 Moreover, let    $\varphi^\star\in C([0,T];\sR)$ 
satisfy    
for all $t \in [0,T]$,
\begin{align}\label{eq:lyapunov_reg_num_2}
		(\tfrac{\d }{\d t}{\varphi})_t + &  \tfrac{1}{2}\tr\left(  \left(
		P^\star_t
		\textstyle\sum_{j=1}^3  (D^{(j)})^\top D^{(j)} +\rho \bar{V}^{-1} \right)
		V^\star_t\right)  +\tfrac{\rho}{2}\left( -3+\ln\left(\tfrac{\det(\bar{V})}{\det(V_t)} \right)\right) =0;
		\quad 
		\varphi_T  = 0,
\end{align}
Then the optimal cost of  \eqref{eq:cost_numerical}-\eqref{eq:state_numerical}
is given by
$\cC^\star=
\frac{1}{2}\sE[\xi_0^\top  \xi_0]P^\star_0+\varphi^\star_0$.

\paragraph{Implementation details.}

The numerical experiments are coded by using Tensorflow.
To    examine the linear convergence, 
 the scheme \eqref{eq:NPG_discrete_finite}  is implemented 
  on the  uniform time grid   $\pi_c$ with  mesh size $\Delta t =1/128$.
Indeed, let 
$K^0\equiv (1/3,1/3,1/3)$ and $V^0 \equiv 0.1D^\top D$
be the   initial guess.      
For  each $n\in \sN_0$, given $\theta^n=(K^n_i, V^n_i)_{i=0}^{127}$, 
 consider the Euler--Maruyama discretisation of    \eqref{eq:state_numerical_randomised}:
 $X_0 = \xi_0$ and 
 for all $i=0,\ldots, 127$, 
\begin{align}
\label{eq:EM_numerical}
    \begin{split}
                X_{i+1} =  X_{i }+ 
        B_{i\Delta t} \left( K^n_{i}X_i+(V^n_{i})^{\frac{1}{2}}\zeta_i\right)
 \Delta t+
\sum_{j=1}^3 D^{(j)} \left( K^n_{i}X_i+(V^n_{i})^{\frac{1}{2}} \zeta_{i}\right) 
 \Delta W^{(j)}_i,
    \end{split}
\end{align}
where $(\Delta W^{(j)}_i)_{i=0,\ldots, 127, j=1,\ldots 3}$
are independent normal random variables with mean zero and variance $1/128$,
and $(\zeta_{i})_{i=0}^{ 127}$ are  independent standard normal random vectors in $\sR^3$. 
We simulate $N_{\rm MC}=10^5$ independent trajectories
of \eqref{eq:EM_numerical} and approximate  $\cC(\theta^n)$ as follows
(cf.~\eqref{eq:C_theta_X}):
\begin{align*}
    \begin{split}
        \widehat{\mathcal{C}}(\theta^n)\coloneqq 
        & \frac{1}{N_{\rm MC}}\sum_{l=1}^{N_{\rm MC}}\frac{1}{2}\Bigg( \mu X_{128,l}^2 +\rho\sum_{i=0}^{127}\Big( (K^n_{i})^\top\bar{V}^{-1}K^n_{i}X_{i,l}^2+\tr(\bar{V}^{-1}V^n_{i})
        -3 +\ln\left(\tfrac{\det(\bar{V})}{\det(V^n_i)} \right) 
        \Big)\Delta t\Bigg),
    \end{split}
\end{align*}
where  $  (X_{i,l})_{i=0}^{128}$, $l=1,\ldots, N_{\rm MC}$,  represents the 
$l$-th trajectory of \eqref{eq:EM_numerical}. 
The required  gradients   $(\widehat{\nabla_{K^n_{i}} \mathcal{C}},\widehat{\nabla_{V^n_{i}} \mathcal{C}} )_{i=1}^{127} $ 
are computed using automatic differentiation along these paths, 
and
for each $i=0,\ldots, 127$, 
 the state covariance $\Sigma^{\theta^n}_{i\Delta t}$  is  estimated  
by      $\widehat{\Sigma}^n_{i} \coloneqq \frac{1}{N_{\rm MC}}\sum_{l=1}^{N_{\rm MC}}X^2_{i,l}$. 
The policy is then updated as follows (cf.~\eqref{eq:NPG_discrete_finite}):   
for all $i=0,\cdots,127$,
\begin{equation*}
 K_{i}^{n+1}=K_{i}^n-\frac{\tau}{\Delta t \widehat{\Sigma}^n_{i} }    \widehat{\nabla_{K^n_{i}} \mathcal{C}},
  \quad 
  V_{i}^{n+1}=V_{i}^n-\frac{\tau}{\Delta t } 
  \left(
   \widehat{\nabla_{V^n_{i}} \mathcal{C}}\, V_{i}^n+V_{i}^n\,  \widehat{\nabla_{V^n_{i}} \mathcal{C}}
  \right).
\end{equation*}
The optimal cost of  \eqref{eq:cost_numerical}-\eqref{eq:state_numerical}
is computed by 
solving    \eqref{eq:riccati_num}
and \eqref{eq:lyapunov_reg_num_2} with the  explicit Euler scheme on $\pi_c$, 
which leads to the value 
 $\cC^\star = 0.0402$.

To examine the robustness of \eqref{eq:NPG_discrete_finite} in time discretisation,  
a family of coarser time grids 
$\pi_m=\{i\frac{1}{m}\}_{i=0}^m\subset \pi_c $, $m\in \{8,16,32,64,128\}$, have been introduced.
The PG scheme only updates   policy parameters at the grid points of these coarser grids.
However, to mimic a continuous-time environment, 
  the performance   of each policy iterate is still evaluated by simulating 
\eqref{eq:EM_numerical} on the fine grid $\pi_c$ (with mesh size $\Delta t= 1/128$). 
In particular,
let $(K^0,V^0)$ be given as above. 
For each  $m\in \{8,16,32,64,128\}$ and  $n\in \sN_0$, given 
$\theta^n=(K^n_{j},V^n_{j})_{j=0}^{m-1}$, consider  the following   Euler-Maruyama discretisation of    \eqref{eq:state_numerical_randomised}:
$ X_0 =  \xi_0$ and 
 for all $j=0,\ldots, m-1$, and all $i=0,\ldots, 127$
 such that $\frac{j}{m} \leq i\Delta t<\frac{j+1}{m}$,
\begin{align}
\label{eq:EM_numerical_m}
    \begin{split}
               \quad
        X_{i+1} =  X_{i }+ 
        B_{i\Delta t} \left( K^n_{j}X_i+(V^n_{j})^{\frac{1}{2}}\zeta_i\right)
 \Delta t+
\sum_{j=1}^3 D^{(j)} \left( K^n_{j}X_i+(V^n_{j})^{\frac{1}{2}} \zeta_{i}\right) 
 \Delta W^{(j)}_i,
    \end{split}
\end{align}
where 
 $(\Delta W^{(j)}_i)_{i=0,\ldots, 127, j=1,\ldots 3}$
 and $(\zeta_{i})_{i=0}^{ 127}$ are independent random variables as in \eqref{eq:EM_numerical}. 
We shall
 sample      $10^5$ independent trajectories
of \eqref{eq:EM_numerical_m},
and use them to  approximate 
the   gradients in    $(K^n_{j},V^n_{j})_{j=0}^{m-1}$ 
 and 
 the state covariance 
 $(\Sigma^{\theta^n}_{j/m})_{j=0}^{m-1}$ with similar methods as above. 
 The scaled PG method \eqref{eq:NPG_discrete_finite} then updates the parameters by: 
  for all $j=0,\ldots, m-1$,
\begin{equation}
\label{eq:scaled_PG}
  K_{j}^{n+1}=K_{j}^n-\frac{m\tau}{\widehat{\Sigma}^n_j }    \widehat{\nabla_{K^n_{j}} \mathcal{C}},
  \quad 
  V_{j}^{n+1}=V_{j}^n- {m\tau} 
  \left(
   \widehat{\nabla_{V^n_{j}} \mathcal{C}}\, V_{j}^n+V_{j}^n\,  \widehat{\nabla_{V^n_{j}} \mathcal{C}}
  \right),
  \q \textnormal{with $\tau =0.01$},
\end{equation}
while the  unscaled PG method   updates the parameters by:
 for all $j=0,\ldots, m-1$,
\begin{equation}
\label{eq:unscaled_PG}
  K_{j}^{n+1}=K_{j}^n-\frac{\tau}{\widehat{\Sigma}^n_j }    \widehat{\nabla_{K^n_{j}} \mathcal{C}},
  \quad 
  V_{j}^{n+1}=V_{j}^n- {\tau} 
  \left(
   \widehat{\nabla_{V^n_{j}} \mathcal{C}}\, V_{j}^n+V_{j}^n\,  \widehat{\nabla_{V^n_{j}} \mathcal{C}}
  \right),
    \q \textnormal{with $\tau =0.08$}.
\end{equation}
Let 
$(\theta^{\pi_m,n})_{n\in \sN_{ 0}}$ be  the policy iterate    generated by \eqref{eq:scaled_PG},
  define $N^{\pi_m}(0.01)$ by  
$$
N^{\pi_m}(0.01)\coloneqq \min
\left\{n\in \sN_{  0} 
\,\vert\,
 \widehat{\cC}(\theta^{\pi_m,n})-\cC^\star_{\pi_m})< 0.01 
\right\},
$$
where   $\cC^\star_{\pi_m}\coloneqq \frac{1}{50}\sum_{n=951}^{1000}  \widehat{\cC}(\theta^{\pi_m,n})$
 approximates the optimal cost  among all piecewise constant polices on $
 \pi_m$. The quantity $N^{\pi_m}(0.01)$ is defined similarly for the iterates  generated by \eqref{eq:unscaled_PG}. 

 
\bibliographystyle{siam}
\bibliography{pgm.bib}

 \end{document}